\newcommand{\I}{\mathrm{i}}
\DeclareMathOperator{\tr}{\mathrm{tr}}
\newcommand{\PD}{\partial}
\newcommand{\Beq}{\begin{equation}}
	\newcommand{\Eeq}{\end{equation}}
\newcommand{\beq}{\begin{equation*}}
	\newcommand{\eeq}{\end{equation*}}
\newcommand{\bal}{\begin{align}}
	\newcommand{\eal}{\end{align}}
\newtheorem{theorem}{Theorem}[section]
\newtheorem{corollary}[theorem]{Corollary}
\newtheorem{lemma}[theorem]{Lemma}
\newtheorem{proposition}[theorem]{Proposition}
\theoremstyle{definition}
\newtheorem{remark}{Remark}[section]
\newcommand{\R}{\mathbb{R}}
\newcommand{\Rn}{\mathbb{R}^n}
\newcommand{\norm}[1]{\lVert #1 \rVert}
\newcommand{\abs}[1]{\lvert #1 \rvert}
\newcommand{\supp}{\mathop{\rm supp}}
\newcommand{\dif}[1]{\,\mathrm{d}{#1}}
\newcommand{\df}{\mathrm{d}}
\newcommand{\nrm}[2][]{ \| {#2} \|_{#1}}
\newcommand{\agl}[1][\cdot]{ \langle {#1} \rangle}
\numberwithin{equation}{section}
\title[Anisotropic Calder\'on problem]{The anisotropic Calder\'on problem at large fixed frequency on  manifolds with invertible ray transform}
\author{Shiqi Ma}
\address{School of Mathematics, Jilin University, Changchun, China}
\email{mashiqi@jlu.edu.cn, mashiqi01@gmail.com}
\author{Suman Kumar Sahoo}
\address{Seminar for Applied Mathematics, Department of Mathematics, ETH Z\"urich, Switzerland}
\email{suman.sahoo@math.ethz.ch}
\author{Mikko Salo}
\address{Department of Mathematics and Statistics, University of Jyv\"askyl\"a, Jyv\"askyl\"a, Finland}
\email{mikko.j.salo@jyu.fi}
\begin{document}

\begin{abstract}
	
    We consider the inverse problem of recovering a potential from the Dirichlet to Neumann map at a large fixed frequency on certain Riemannian manifolds. We extend the earlier result of [G.~Uhlmann and Y.~Wang, arXiv:2104.03477] to the case of simple manifolds, and more generally to manifolds where the geodesic ray transform is stably invertible. The argument involves an invariantly formulated construction of Gaussian beam quasimodes with uniform bounds for the underlying constants.
	
\end{abstract}

\subjclass[2020]{Primary 35R30, 31B20, 31B30, 35J40}

\keywords{Calder\'{o}n problem, Gaussian beam construction, geodesic ray transform}

\maketitle

\section{Introduction and statement of main results}

Let $(M,g)$ be an $n$-dimensional  $(n\ge 2)$ compact Riemannian manifold  with smooth boundary, and let $\lambda \geq 0$ be a frequency. 
We consider the boundary value problem
\begin{equation} \label{eq:1-AC22}
	\left\{ \begin{aligned}
		\mathcal{L}_{q,\lambda} u
		:= (-\Delta_g + q - \lambda^2)u&=0 \quad \mbox{in}\quad M, \\
		u&=f \quad \mbox{on} \quad \partial M,
	\end{aligned} \right.
\end{equation}
where $\Delta_g$ is the Laplace-Beltrami operator on $M$.
In local coordinates,
\[
\Delta_g u
= |g|^{-\frac{1}{2}}\partial_{j}(|g|^{\frac{1}{2}} g^{jk} \partial_{k} u),
\]
where $(g^{jk})=(g_{jk})^{-1}$ and $|g| = \det(g_{jk})$.
Suppose $\lambda^2$ is not a Dirichlet eigenvalue of $-\Delta_g +q$ in $M$, and let $ u\in C^{\infty}(M)$ be the unique solution of \eqref{eq:1-AC22} for a Dirichlet boundary condition $f \in C^{\infty}(\partial M)$.
The Dirichlet to Neumann map (DN map) associated to \eqref{eq:1-AC22} is given by
\begin{equation} \label{eq:DN-AC22}
	\Lambda_q^{\lambda}: C^{\infty}(\partial M) \to C^{\infty}(\partial M), \qquad \Lambda_{q}^{\lambda} f
	:= \partial_{\nu} u |_{\partial M}
	:= g^{jk} \nu_j \partial_{k} u |_{\partial M}. 
\end{equation}
The inverse problem we are interested in is to recover $q$ from $\Lambda_q^{\lambda}$ for a large but \emph{fixed} $\lambda$.

Before going to the statement of our main result, we first define an admissible class of perturbations for which we establish our uniqueness result.
For any nonzero $p \in H^s(M)$, we introduce a frequency function $N_s(p)$ of $p$ by
\begin{align*} 
	N_s(p)
	:= \frac {\nrm[H^s(M)]{p}} {\nrm[L^2(M)]{p}}.
\end{align*}%
If $p=0$ we define $N_s(p) = 0$. For any number $B > 0$ and any $s>0$, 
we define a set $\mathcal{A}_s(B)$ of admissible perturbations by 
\begin{equation} \label{eq:NAs-AC22}
	\mathcal A_s(B)
	:= \{p\in H^s(M) \,:\, N_s(p) \le B \}.
\end{equation}
Note that $p\in \mathcal{A}_s(B)$ implies $p \in C^1(M)$ or $p\in C^0(M)$ if $s > \frac{n}{2}+1$ or $ s > \frac{n}{2}$, respectively, by Sobolev embedding \cite{Taylor_book}*{Proposition 4.3}.

We establish two main results in this work.
Our first result is for simple manifolds (see e.g.\ \cite{PSU_book}). A compact Riemannian manifold $(M,g) $ with boundary is said to be \emph{simple} if 
(i) $(M,g)$ is nontrapping (every geodesic reaches the boundary in finite time),
(ii) $\partial M$ is strictly convex (the fundamental form of $\partial M$ is positive definite), and 
(iii) $(M,g)$ has no conjugate points.
Examples include strictly convex simply connected domains in nonpositively curved manifolds. 
Our first main result is as follows.

\begin{theorem} \label{thm:1-AC22}
	Let $(M,g)$ be a simple manifold of dimension $n \ge 2$.
	Let $B > 0$ and assume $\norm{q}_{H^s(M)} \le B$, $\norm{p}_{H^s(M)} \le B$, and $p \in \mathcal A_s(B)$, where $s > \frac{n}{2}$. 
	There is a positive constant $\lambda_0 = \lambda_0(M,g,s,B)$ such that if $\Lambda_{q+p}^\lambda =\Lambda_{q}^\lambda$ for at least one $\lambda \geq \lambda_0$, then \mbox{$p=0$ in $M$.}
\end{theorem}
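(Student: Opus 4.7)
The strategy is to reduce the problem to the inversion of the geodesic ray transform via a Gaussian beam construction. First, I would establish the standard integral identity: since $\lambda^2$ is not a Dirichlet eigenvalue (for large enough $\lambda$ this can be arranged, or one handles the exceptional values separately) and $\Lambda_{q+p}^\lambda = \Lambda_q^\lambda$, for any pair of solutions $u_1 \in H^1(M)$ with $\mathcal L_{q+p,\lambda} u_1 = 0$ and $u_2 \in H^1(M)$ with $\mathcal L_{q,\lambda} u_2 = 0$, one has
\[
\int_M p \, u_1 \overline{u_2} \, dV_g = 0.
\]
The plan is to substitute into this identity two families of approximate (Gaussian beam) solutions that concentrate on the \emph{same} unit-speed geodesic $\gamma : [0,L] \to M$ joining two boundary points, correcting them by genuine solutions of the two Schr\"odinger equations via a solvability estimate.

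Next, I would carry out the Gaussian beam construction at frequency $\lambda$ for the operators $-\Delta_g - \lambda^2 + q$ and $-\Delta_g - \lambda^2 + q + p$: approximate solutions of the form $v_j = e^{i\lambda \Phi_j} a_j$ with complex phases $\Phi_j$ whose imaginary part is positive and quadratic in the transverse directions to $\gamma$, amplitudes $a_j$ supported in a tube of radius $\lambda^{-1/2}$ around $\gamma$, and with phases chosen so that $\Phi_1 - \overline{\Phi_2}$ is real on $\gamma$. A standard transport-equation analysis gives $\mathcal L_{q,\lambda} v_1 = O_{L^2}(\lambda^{-N})$ and similarly for $v_2$, and, after correcting by the unique exact solution with appropriate Dirichlet data via an a priori $H^1$--$H^{-1}$ bound for $\mathcal L_{q,\lambda}$ (valid once $\lambda \ge \lambda_0$ avoids the spectrum and is used to compensate the $\lambda$--dependence of constants), one gets genuine solutions agreeing with $v_j$ up to small $H^1$ error. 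Plugging into the integral identity and performing stationary phase in the transverse variables concentrates $v_1 \overline{v_2}$ onto $\gamma$ with Jacobian factor, so that up to controllable errors
\[
\Bigl| \int_0^L p(\gamma(t))\, dt \Bigr|
\le C \lambda^{-\alpha} \bigl( \norm{p}_{H^s(M)} + 1 \bigr)
\]
for some $\alpha > 0$ depending on $s$.

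The key step is then to apply the stable invertibility of the geodesic ray transform on simple manifolds: there is $C = C(M,g)$ such that
\[
\norm{p}_{L^2(M)} \le C \norm{I p}_{H^{1/2}(\partial_+ SM)},
\]
where $I$ denotes the geodesic ray transform (see the discussion for simple manifolds in \cite{PSU_book}). Varying the geodesic $\gamma$ and keeping track of the uniformity in $\gamma$ of the Gaussian beam error estimate, the displayed inequality upgrades to an estimate $\norm{I p}_{H^{1/2}(\partial_+ SM)} \le C \lambda^{-\alpha} (\norm{p}_{H^s(M)} + 1)$; combined with stability this yields
\[
\norm{p}_{L^2(M)} \le C \lambda^{-\alpha} \norm{p}_{H^s(M)}.
\]
The admissibility hypothesis $N_s(p) \le B$ is exactly what closes the loop: it gives $\norm{p}_{H^s(M)} \le B \norm{p}_{L^2(M)}$, hence $\norm{p}_{L^2(M)} \le C B \lambda^{-\alpha} \norm{p}_{L^2(M)}$, forcing $p = 0$ for $\lambda \ge \lambda_0(M,g,s,B)$.

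The main obstacle I anticipate is the error analysis in the Gaussian beam step: since $\lambda$ is fixed (not sent to infinity) and $p$ has only finite $H^s$ regularity, one must construct beams with enough transport terms to beat the loss from differentiating $p$, and must control the remainder in a norm compatible with Sobolev embedding ($s > n/2$ gives $C^0$, so pairing $p$ against the concentrating Gaussian beam density makes sense). A secondary but essential technical point is establishing the uniform $\mathcal L_{q,\lambda}$-solvability estimate for the correction term that is polynomial in $\lambda$, so that the $\lambda^{-N}$ beam remainder indeed produces a genuine small correction. Once these two ingredients are in place, the frequency function bound in $\mathcal A_s(B)$ converts the power decay in $\lambda$ into actual vanishing of $p$.
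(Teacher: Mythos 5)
Your strategy is viable in outline, but it is essentially the route the paper takes for Theorem \ref{thm:2-AC22} (general manifolds), not the one used for Theorem \ref{thm:1-AC22}. For simple manifolds the paper instead uses global real-phase WKB solutions $e^{\I\lambda r}|g|^{-1/4}b(\theta)$ in polar normal coordinates centred at points $y\in\partial M_1$ of a slightly larger simple manifold: the eikonal and transport equations are solved exactly, the phases cancel exactly in $u_1\bar u_2$ (no stationary phase), and the main term is $\int_{\partial_+S_yM_1}Ip(y,\theta)b(\theta)\,\df\theta$, i.e.\ the ray transform paired against a density over the whole fan of geodesics through $y$. Integrating over $y$, choosing $b(\theta)=\langle\nu_y,\theta\rangle I(I^*Ip)$ and applying the Santal\'o formula yields $\|I^*Ip\|_{L^2(M_1)}^2\lesssim\lambda^{-1}\|p\|_{H^1(M)}^2$, and the inversion goes through the normal operator $I^*I$ (Lemma \ref{lemma_simple_normal}) after interpolating $\|I^*Ip\|_{H^1}^2\le\|I^*Ip\|_{L^2}\|I^*Ip\|_{H^2}$. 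This route is what allows the weaker hypothesis $s>\frac n2$.

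There are three concrete gaps in your version. First, your key estimate has error $C\lambda^{-\alpha}(\|p\|_{H^s}+1)$; the additive $+1$ is fatal, since then you only conclude $\|p\|_{L^2}\le C'\lambda^{-\alpha}$ (smallness), not $p=0$. Every remainder arising from the identity $\int_M p\,u_1\bar u_2\,\df V_g=0$ carries a factor of $p$, and the admissibility condition $\|p\|_{L^\infty}\lesssim\|p\|_{H^s}\le B\|p\|_{L^2}$ is what lets you write the error as $C\lambda^{-\alpha}\|p\|_{L^2}$; you must keep this proportionality. Second, the ``upgrade'' from the single-geodesic bound $|Ip(\gamma)|\lesssim\lambda^{-\alpha}\|p\|_{L^2}$ to $\|Ip\|_{H^{1/2}(\partial_+SM)}\lesssim\lambda^{-\alpha}\|p\|_{L^2}$ is not automatic: a uniform bound over geodesics controls only $\|Ip\|_{L^2(\partial_+SM_1)}$, and to reach a positive-order Sobolev norm you must interpolate against $\|Ip\|_{H^2}\lesssim\|p\|_{H^2}$ (Lemma \ref{lemma_i_bounded} and $\|p\|_{H^s}\le B$), which halves the rate in $\lambda$; this interpolation is exactly the step the paper carries out and you omit. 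Third, and most seriously for the stated hypothesis: the concentration estimate $\int_M p|v|^2\,\df V_g=Ip(\gamma)+O(\lambda^{-\alpha})$ requires quantitative control of $p$ transverse to $\gamma$. The paper's version (Proposition \ref{prop:uvr2-AC22}) uses $\|p\|_{C^1}$, which is why Theorem \ref{thm:2-AC22} assumes $s>1+\frac n2$ (Remark \ref{rem:TCP-AC22}). Under $s>\frac n2$ you only have $p\in C^{0,s-n/2}$, so your transverse Taylor/stationary-phase remainder must be redone with a H\"older modulus of continuity (this degrades $\alpha$ but can still close); as written, your beam error analysis does not establish the theorem under the hypothesis $s>\frac n2$.
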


\begin{remark}
	The assumption that $p \in \mathcal A_s(B)$ is similar to the assumption that the perturbation is angularly controlled in \cite{Rakesh_Uhlmann_14}*{Theorem 2} or horizontally controlled in \cite{RakeshSalo2020}. This assumption is always satisfied for some $B$ if $p$ lies in a finite dimensional space, since the norms $\nrm[L^2(M)]{p}$ and $\nrm[H^s(M)]{p}$ are equivalent in finite dimensional spaces. See the example after \cite{Rakesh_Uhlmann_14}*{Theorem 2} for infinite dimensional expansions satisfying such a condition.
	
	Moreover, the assumption $p \in \mathcal A_s(B)$ for some $s > \frac{n}{2}$  is not optimal. It might be possible to modify the argument so that $p \in \mathcal A_2(B)$ or even $p \in \mathcal A_{1+\varepsilon}(B)$ is sufficient. However, some bound on the frequency of the perturbation is needed in order to have a uniform estimate for $\lambda_0$. This  places a restriction on the perturbations that can be treated with this method.
\end{remark}

\smallskip

We note that Theorem \ref{thm:1-AC22} can also be reformulated as follows.

\begin{corollary} \label{thm:1-AC22_second}
	Let $(M,g)$ be a simple manifold of dimension $n \ge 2$, and let $q_1, q_2 \in H^s(M)$ where $s > \frac{n}{2}$.
	Then there exists a positive constant $\lambda_0$ depending on $M$, $g$, $s$, $\nrm[H^s(M)]{q_j}$, and $N_s(q_1-q_2)$ such that if $\Lambda_{q_1}^\lambda =\Lambda_{q_2}^\lambda$ for at least one $\lambda \geq \lambda_0$, then $q_1=q_2$ in $M$.
\end{corollary}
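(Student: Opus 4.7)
The plan is a direct reduction to Theorem \ref{thm:1-AC22} via a simple relabeling. Setting $q := q_1$ and $p := q_2 - q_1$, the hypothesis $\Lambda_{q_1}^\lambda = \Lambda_{q_2}^\lambda$ becomes exactly $\Lambda_q^\lambda = \Lambda_{q+p}^\lambda$, and the conclusion $p = 0$ of Theorem \ref{thm:1-AC22} translates into $q_1 = q_2$.

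The only real task is to exhibit a single constant $B$ that simultaneously controls all three hypotheses of Theorem \ref{thm:1-AC22} in terms of the data allowed by the corollary. I would take
\[
B := \max\bigl\{ \nrm[H^s(M)]{q_1} + \nrm[H^s(M)]{q_2},\ N_s(q_1 - q_2) \bigr\}.
\]
By the triangle inequality, $\nrm[H^s(M)]{p} \le \nrm[H^s(M)]{q_1} + \nrm[H^s(M)]{q_2} \le B$, while $\nrm[H^s(M)]{q} \le B$ and $N_s(p) \le B$ are immediate from the definition of $B$; hence $p \in \mathcal{A}_s(B)$. Applying Theorem \ref{thm:1-AC22} then supplies a frequency threshold $\lambda_0 = \lambda_0(M, g, s, B)$ above which the coincidence $\Lambda_{q+p}^\lambda = \Lambda_q^\lambda$ forces $p = 0$, and because $B$ is determined by $\nrm[H^s(M)]{q_j}$ and $N_s(q_1 - q_2)$ alone, the resulting $\lambda_0$ depends on precisely the quantities allowed in the statement of the corollary.

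There is no real obstacle in this step: the corollary is essentially a cosmetic rephrasing of the theorem, in which the background potential $q$ has been absorbed into the perturbation by writing $q_2 = q_1 + (q_2-q_1)$. All the verifications above reduce to the triangle inequality and the definition of $\mathcal{A}_s(B)$; the genuine analytic content is entirely contained in Theorem \ref{thm:1-AC22}.
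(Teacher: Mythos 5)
Your proposal is correct and is exactly the intended argument: the paper states the corollary as an immediate reformulation of Theorem \ref{thm:1-AC22} without writing out the reduction, and your relabeling $q = q_1$, $p = q_2 - q_1$ together with the choice of $B$ (noting $N_s(q_2-q_1) = N_s(q_1-q_2)$) fills in precisely the routine verifications involved.
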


To state the second result, let us recall the notation for the geodesic ray transform following \cites{Sharafutdinov_book,PSU_book}. Assume that $(M,g)$ is nontrapping with strictly convex boundary. For a function $f$ on $(M,g)$, its geodesic ray transform is defined by
\[
If(\gamma) := \int_{s \in \gamma} f(s) \dif s
\]
where $\gamma$ ranges over the maximal geodesics on $(M,g)$.
The geodesic ray transform $I$ on $(M,g)$ is called \emph{stably invertible} (in terms of the $H^1$ norm of the ray transform) when there exists a slightly larger manifold $M_1$ with $M$ embedded in $M_1^{\mathrm{int}}$ and a positive constant $C_1$ such that
\begin{equation} \label{eq:stable-AC22}
	\nrm[L^2(M_1)]{f}
	\leq C_1 \nrm[H^1(\partial_+ S M_1)]{If}
\end{equation}
holds for all $f \in H^s(M_1)$ with $\mathrm{supp}(f) \subset M$,  for some $s>\frac{n}{2}+1$. 
On simple manifolds $(M_1,g)$ of dimension $n\ge 2$ the estimate \eqref{eq:stable-AC22} may be found e.g.\ in \cite{PSU_book}*{Theorem 4.7.8}, and related estimates even with $H^{1/2}$ norm on the right are proved in \cites{AssylbekovStefanov2020, PS_stability}.
In dimensions $n\ge 3$, if $(M_1,g)$ has strictly convex boundary and is globally foliated by strictly convex hypersurfaces, an estimate similar to \eqref{eq:stable-AC22} is proved in \cite{Uhlmann_Vasy_invention}.
Finally, for strictly convex manifolds with no conjugate points and hyperbolic trapped set, estimates similar to \eqref{eq:stable-AC22} follow from \cite{Colin_JAMS}. 
We also need the following continuity result of the geodesic ray transform which holds true at least on strictly convex nontrapping manifolds \cites{Sharafutdinov_book, PSU_book},
\begin{equation} \label{eq:stable2-AC22}
	\nrm[H^2(\partial_+ S M_1)]{If}
	\leq C_2 \nrm[H^2(M_1)]{f}.
\end{equation}
We present more details on the geodesic ray transform in Section \ref{sec:gdsc-AC22}.
The constraint in Theorem \ref{thm:1-AC22} that the manifolds must be simple can be relaxed under \eqref{eq:stable-AC22} and \eqref{eq:stable2-AC22}. As a result, a more general theorem follows. 

\begin{theorem} \label{thm:2-AC22}
	Let $(M,g)$ be a compact nontrapping Riemannian manifold of dimension $ n \ge 2$ with smooth boundary.
	Suppose the geodesic ray transform is stably invertible and continuous, i.e.~\eqref{eq:stable-AC22} and \eqref{eq:stable2-AC22} are satisfied.
	Assume $\norm{q}_{H^s(M)} \le B$, $\norm{p}_{H^s(M)} \le B$, $p \in \mathcal{A}_s(B)$ with $s > 1+\frac{n}{2}$. Then there exists a positive constant $\lambda_0 = \lambda_0(M,g,s,B)$ such that if $\Lambda_{q+p}^\lambda =\Lambda_{q}^\lambda$ holds for at least one $\lambda \geq \lambda_0$, then  $p=0$ in $M$.
\end{theorem}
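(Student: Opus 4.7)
The plan is to adapt the high-frequency Gaussian beam strategy used in \cite{Uhlmann_Wang_2022}, replacing Euclidean plane waves by complex Gaussian beams attached to geodesics of $(M,g)$, and then to close the resulting estimate using the stable invertibility hypothesis \eqref{eq:stable-AC22} together with the admissibility assumption $p\in\mathcal{A}_s(B)$. The role of simplicity in \cite{Uhlmann_Wang_2022} will be played here by the two ray transform estimates \eqref{eq:stable-AC22} and \eqref{eq:stable2-AC22}.

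A routine integration by parts, exploiting $\Lambda_{q+p}^\lambda=\Lambda_q^\lambda$ and the symmetry of the Dirichlet-to-Neumann map, first produces the integral identity
\[
\int_M p\,u_1 u_2\,dV_g=0
\]
for every $u_1\in H^1(M)$ with $\mathcal{L}_{q+p,\lambda}u_1=0$ and every $u_2\in H^1(M)$ with $\mathcal{L}_{q,\lambda}u_2=0$. For each maximal unit-speed geodesic $\gamma$ in the extended manifold $(M_1,g)$, I then build complex Gaussian beam quasi-modes $v^\lambda=e^{i\lambda\Theta}\sum_{k=0}^N\lambda^{-k}a_k$ concentrated on $\gamma$, with $\mathrm{Im}\,\Theta\gtrsim\mathrm{dist}(\cdot,\gamma)^2$ and the amplitudes $a_k$ determined inductively by transport equations along $\gamma$; this construction is local along $\gamma$ and does not require simplicity. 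Choosing $N$ sufficiently large makes $\mathcal{L}_{q,\lambda}v^\lambda$ of order $\lambda^{-K}$ in $L^2$ for any prescribed $K$, and for $\lambda$ outside the discrete Dirichlet spectrum I correct $v^\lambda$ and $\overline{v^\lambda}$ to exact solutions $u_2$, $u_1$ of the respective equations, with remainders $r_j$ of size $O_{L^2}(\lambda^{-K'})$ where $K'$ may be taken as large as desired.

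Substituting into the integral identity and applying stationary phase in the $(n-1)$ directions transverse to $\gamma$ extracts the leading term $c_\gamma\,\lambda^{-(n-1)/2}(Ip)(\gamma)$ from $\int_M p\,v^\lambda\overline{v^\lambda}\,dV_g$, where $c_\gamma>0$ depends only on the beam amplitude and on the geometry along $\gamma$. Estimating the correction contributions by Cauchy--Schwarz, using $\|v^\lambda\|_{L^2}\lesssim\lambda^{-(n-1)/4}$ and bounding $\|p\|_{C^1}$ by Sobolev embedding (legitimate because $s>1+\tfrac{n}{2}$), yields the uniform-in-$\gamma$ estimate
\[
|(Ip)(\gamma)|\le C(M,g,s,B)\,\lambda^{-\alpha}\,\|p\|_{H^s(M)}
\]
for some fixed $\alpha>0$. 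The continuity estimate \eqref{eq:stable2-AC22}, together with $\|p\|_{H^2(M_1)}\le\|p\|_{H^s(M)}$, gives $\|Ip\|_{H^2(\partial_+SM_1)}\le C_2\|p\|_{H^s(M)}$; interpolating between this bound and the $L^2(\partial_+SM_1)$-version of the $L^\infty$ estimate above produces $\|Ip\|_{H^1(\partial_+SM_1)}\le C\,\lambda^{-\alpha/2}\,\|p\|_{H^s(M)}$. Applying the stable invertibility \eqref{eq:stable-AC22} and finally the admissibility inequality $\|p\|_{H^s(M)}\le B\|p\|_{L^2(M)}$ then gives
\[
\|p\|_{L^2(M)}\le C(M,g,s,B)\,\lambda^{-\alpha/2}\,\|p\|_{L^2(M)},
\]
which forces $p=0$ once $\lambda\ge\lambda_0$ with $\lambda_0$ chosen large enough that the prefactor is strictly less than one.

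The main obstacle is the Gaussian beam construction and correction step in the absence of simplicity: arbitrary maximal geodesics may self-intersect or approach trapping, so the beam must be assembled from local pieces on tubular neighborhoods along $\gamma$, and both the remainder bound $\|r_j\|_{L^2}=O(\lambda^{-K'})$ and the stationary-phase expansion need constants that are uniform over all $\gamma\in\partial_+SM_1$ and all $\lambda\ge\lambda_0$ avoiding the Dirichlet spectrum. Securing this uniformity, together with the resolvent estimate for $\mathcal{L}_{q,\lambda}$ required to pass from quasi-modes to exact solutions, is the technical core of the argument; once those uniform bounds are in hand, the interpolation-plus-admissibility closing of the loop is essentially algebraic.
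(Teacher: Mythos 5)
Your proposal is correct and follows essentially the same route as the paper: the integral identity, Gaussian beams glued along (possibly self-intersecting) geodesics with the resolvent estimate converting quasimodes to exact solutions, extraction of $Ip(\gamma)$ up to $O(\lambda^{-\alpha})\nrm[L^2]{p}$ errors via the admissibility bound, and then the interpolation of $\nrm[H^1(\partial_+SM_1)]{Ip}$ between the $L^2$ smallness and the $H^2$ continuity \eqref{eq:stable2-AC22}, closed by \eqref{eq:stable-AC22}. The only point to adjust is the phrase about avoiding the Dirichlet spectrum: the paper's Proposition \ref{prop:resol-AC22} is an interior solvability result (no boundary condition imposed), proved by a positive-commutator/escape-function argument using nontrapping, so it holds uniformly for all $\lambda \geq \lambda_0$ with a correction of size $O(\lambda^{-1})\nrm[L^2]{\mathcal{L}_{q,\lambda}v}$ — using the actual Dirichlet resolvent would destroy the required uniformity near eigenvalues.
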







We now provide a brief survey of the existing results of the Calder\'on problem.
In the Euclidean setting there is a substantial literature on such problems and we refer the readers to the survey \cite{Uhl_survey}.  In this work we are interested in the anisotropic problem, which can be understood as an inverse problem for the equation $\nabla \cdot (\gamma \nabla u)=0$ where $\gamma$ is a positive definite matrix function, or as an inverse problem for the Laplace-Beltrami equation or for the Schr\"odinger equation $(-\Delta_g + q)u = 0$ on a Riemannian manifold. If the manifold and coefficients are real-analytic, they can recovered from the DN map \cites{LeeUhlmann1989, LassasUhlmann2001, LassasTaylorUhlmann2003}. In the smooth case it is known for $n = 2$ that a potential $q$ can be determined from the DN map $\Lambda_q^\lambda$ for a fixed frequency $\lambda \geq 0$ \cite{GuillarmouTzou2011}. For $n \geq 3$ this is an open problem, however there are partial results 
in the class of admissible manifolds as well as conformally transversally anisotropic (CTA) manifolds.

We say that $ (M,g)$ is a CTA manifold if $(M,cg) \subset (\mathbb{R} \times M_0, e \oplus g_0)$, where $c$ is a smooth positive scalar function,  $e$ is the Euclidean metric, and $(M_0,g_0)$ is an $(n-1)$-dimensional manifold.
We say $(M,g)$ is admissible if additionally the transversal manifold $(M_0,g_0)$ is simple.
Theorem \ref{thm:1-AC22} for any $\lambda \ge 0$ has been proved on admissible manifolds in \cite{DKSU}, whereas \cite{Dos_Jems} proved the corresponding uniqueness result on CTA manifolds. These methods are based on a geometric version of complex geometrical optics solutions introduced in \cite{SYL}  in the Euclidean case. Related recent results are given in \cites{CFO, FKO}.



In our setting the manifolds do not satisfy the additional product structure mentioned above, and thus complex geometrical optics solutions are not available. However, when the frequency $\lambda > 0$ is very large there exist traditional geometrical optics type solutions, and one can construct such solutions that concentrate along geodesics. If one could take the limit $\lambda \to \infty$ then one could recover the geodesic ray transform of the perturbation $p$. In our case the frequency $\lambda$ is large but fixed, and we will instead use the condition $p \in \mathcal A_s(B)$ to recover the ray transform. These ideas were used in \cite{Uhlmann_Wang_2022} combined with an analysis of the semiclassical resolvent in order to prove a similar result for nonpositively curved manifolds when $n=3$. We give a direct argument based on geometrical optics and Gaussian beam constructions, and obtain results on any manifold with stably invertible geodesic ray transform in any dimension.

In Section \ref{sec:UC-AC22} we present a Gaussian beam construction with uniform bounds for the underlying constants. This is a key component for proving Theorem \ref{thm:2-AC22}. To achieve this, we express the Riccati and transport ODEs for the phase and amplitude functions of the Gaussian beam in an invariant manner. This ensures that the bounds will not depend on choices of (Fermi) coordinates. Finally, by utilizing energy estimates we are able to obtain the desired uniform bounds.

The rest of the article is structured as follows.
Section \ref{sec:gdsc-AC22} contains some preliminary results related to the geodesic ray transform.
In Section \ref{sec:resol-AC22} we present the proof of a resolvent estimate on non-trapping manifolds. Section \ref{sec:smpl-AC22} gives a construction of special solutions of \eqref{eq:1-AC22} on simple manifolds and proves Theorem \ref{thm:1-AC22}.
The proof of Theorem \ref{thm:2-AC22} is contained in Section \ref{sec:Thm2-AC22}.
In Remarks \ref{rem:sta1-AC22} and \ref{rem:sta2-AC22} we discuss the stability of the inverse problem.
Section \ref{sec:UC-AC22} gives the invariant construction of Gaussian beams with uniform bounds required for Theorem \ref{thm:2-AC22}.

\subsection*{Acknowledgements}

The authors would like to express their deep gratitude to Katya Krupchyk and Simon St-Amant for several helpful discussions, in particular related to uniform bounds for Gaussian beams. The last two authors would also like to thank the Isaac Newton Institute for support and hospitality during the programme Rich and nonlinear tomography (EPSRC grant EP/R014604/1) when part of this work was undertaken. All the authors were partly supported by the Academy of Finland (Centre of Excellence in Inverse Modelling and Imaging, grant 284715) and by the European Research Council under Horizon 2020 (ERC CoG 770924). 

\section{preliminaries on geodesic ray transform} \label{sec:gdsc-AC22}


In this section, we recall the geodesic ray transform and several facts related to it.
We refer readers to \cites{Sharafutdinov_book, PSU_book} for more information on the geodesic ray transform.

Let $M$ be a compact manifold with smooth boundary and let $T_x M$ be the tangent space attached to the point $x \in M$. We write the $g$-inner product for tangent or cotangent vectors as $\agl[\,\cdot\,, \,\cdot\,] = \agl[\,\cdot\,, \,\cdot\,]_g$.
We also write $|\,\cdot\,| = |\,\cdot\,|_g := \agl[\,\cdot\,, \,\cdot\,]_g^{1/2}$.
Sobolev spaces such as $H^k(M)$, $H_0^k(M)$ and $L^2(M)$ can be defined in a similar manner as in the Euclidean setting, and readers may refer to \cite{Taylor_book}*{Chapter 4} for more details. 

The unit sphere bundle $SM$ of $M$ is defined as 
\begin{equation*}
	SM := \bigcup_{x\in M} S_{x} M \quad \mbox{where} \quad S_{x}M :=\{(x,v)\in T_{x} M \,; \,|v|=1\}.
\end{equation*}
If the dimension of $M$ is $n$ then the dimension of $SM$ will be $2n-1$.
The boundary of $SM$, denoted as $\partial (SM)$, is defined as
\(
\partial(SM)
= \{(x,v)\in SM \,;\, x\in \partial M \},
\)
and it is the union of the sets of inward and outward pointing vectors,
\[
\partial_{\pm} S M = \{(x,v)\in \partial(SM) \,;\, \pm\langle v,\nu\rangle \le 0\}.
\]
Here $\nu$ is the outward unit normal to the boundary $ \partial M$. We equip $SM$ with the Sasaki metric induced by $g$, and this yields natural volume forms $\dif(SM)$ and $\dif(\partial SM)$.

A unit speed geodesic starting at $x$ and moving in the direction $v$ is denoted by $t \mapsto \gamma(t,x,v)$.
Let $\tau(x,v)$ be the time when $\gamma$ exits $M$.
We say that $(M,g)$ is nontrapping if $ \tau(x,v)$ is finite for all $ (x,v)\in SM$, and that $(M,g)$ is strictly convex if the second fundamental form on $\partial M$ is positive definite. We also define the geodesic flow $\varphi_t$ on $SM$ by $\varphi_{t}(x,v) := (\gamma(t,x,v),\dot{\gamma}(t,x,v))$.

Let $(M,g)$ be strictly convex and nontrapping. The geodesic ray transform $I: C^{\infty} (M)\rightarrow C^{\infty} (\partial_{+}SM)$ is a linear map given by
\begin{equation*}
    If(x,v) := \int_{0}^{\tau(x,v)} f(\gamma(t,x,v)) \dif t.
\end{equation*}
We recall the Santal\'o formula and the expression of the adjoint of $I$.

\begin{lemma} \label{lem:San1-AC22}
	Let $F \colon SM \rightarrow \mathbb{R}$ be a continuous function. Then we have
	\begin{align}
		\int_{SM} F \dif{(SM)}
		= \int_{\partial_{+}SM}\int_{0}^{\tau(x,v)} F(\varphi_{t}(x,v)) \, \mu (x,v)  \dif t \dif{(\partial SM)},
	\end{align}
	where $ \mu(x,v):=-\langle v, \nu(x) \rangle $.
\end{lemma}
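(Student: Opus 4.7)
The strategy is to unfold the unit sphere bundle along the geodesic flow. Since $(M,g)$ is nontrapping, for every $(y,w) \in SM$ there is a unique pair $(x,v) \in \partial_+ SM$ and a unique time $t \in [0,\tau(x,v)]$ such that $(y,w) = \varphi_t(x,v)$; this pair is obtained by flowing the orbit backwards until it first hits $\partial_+ SM$. This gives a bijection, up to a set of measure zero consisting of glancing vectors and boundary points,
\[
\Psi : \Uc := \{ (x,v,t) \st (x,v) \in \partial_+ SM,\ 0 \le t \le \tau(x,v) \} \to SM, \qquad \Psi(x,v,t) := \varphi_t(x,v).
\]
By the change of variables formula, the lemma reduces to the Jacobian identity $\Psi^* \dif(SM) = \mu(x,v) \, \dif t \, \dif(\partial SM)$.

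\textbf{Main computation.} I would establish this identity in two steps. First, by Liouville's theorem the geodesic flow $\varphi_t$ preserves the Liouville measure $\dif(SM)$, since $\varphi_t$ is the restriction to the unit cosphere bundle of the Hamiltonian flow of $\tfrac{1}{2}|\xi|_g^2$ and preserves the associated symplectic volume. It therefore suffices to verify the Jacobian identity along the slice $\{t=0\}$, i.e.\ to compute the pullback of $\dif(SM)$ near $\partial_+ SM$. Second, in boundary normal coordinates $(x',x^n)$ with $\partial M = \{x^n=0\}$ and $\partial_{x^n}$ the inward unit normal, the Sasaki volume near $\partial M$ factors as $\dif x^n \, \dif(\partial SM)$ on the nose. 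The geodesic vector field $X$ governing $\partial_t \Psi$ has horizontal component equal to $v$, whose inner product with the inward normal is precisely $-\langle v,\nu\rangle = \mu(x,v)$. Combining with the invariance of the Liouville measure yields $\Psi^* \dif(SM) = \mu \, \dif t \, \dif(\partial SM)$, and substitution into $\int_{SM} F \, \dif(SM)$ gives the claimed identity.

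\textbf{Main obstacle.} The only subtlety is justifying that $\Psi$ is a diffeomorphism away from a negligible set. The singular set consists of the glancing vectors $\partial_0 SM = \{(x,v) \in \partial SM \st \langle v,\nu\rangle = 0\}$ together with $\{t=0\}$ and $\{t=\tau(x,v)\}$; it has measure zero with respect to both $\dif(SM)$ and $\dif t \otimes \dif(\partial SM)$ thanks to strict convexity of $\partial M$, which ensures that $\tau$ is continuous on $\partial_+ SM \setminus \partial_0 SM$ and that the glancing set is a codimension-one submanifold of $\partial SM$. Outside this exceptional set, the argument above is a routine application of the change of variables formula combined with the Hamiltonian invariance of the Liouville measure, so the remainder of the proof amounts to a careful but standard bookkeeping of these measure-zero issues.
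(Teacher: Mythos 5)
The paper does not prove this lemma itself; it cites \cite[Chapters 3 and 4]{PSU_book} for the Santal\'o formula. Your sketch is exactly the standard argument found there (and in Sharafutdinov's book): unfold $SM$ by the forward flow out of $\partial_+ SM$, invoke Liouville invariance of $\dif(SM)$ under $\varphi_t$ to reduce the Jacobian computation to the $t=0$ slice, and read off the factor $\mu = -\langle v,\nu\rangle$ from the $x^n$-component of the geodesic vector field in boundary normal coordinates. The plan is sound. One small point of precision for a written version: the claim that the Sasaki volume ``factors as $\dif x^n\,\dif(\partial SM)$ on the nose'' near $\partial M$ is only exact at $x^n=0$, which is all you need, but it is cleaner to phrase the endgame as showing that the contraction $i_X\,\dif(SM)$ restricted to $\partial(SM)$ equals $\mu\,\dif(\partial SM)$, since this avoids any suggestion of a product structure off the boundary.
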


\begin{lemma} \label{lem:San2-AC22}
    Let $ f\in C^{\infty}(M)$ and $ h\in C_0^{\infty}((\partial_{+} SM)^{int})$. Then 
    \begin{align}
        (If, h)_{L_{\mu}^2 (\partial_{+}SM)} = (f, I^*h).
    \end{align}
    Here $I^*h$ is given by 
    \(
    \displaystyle I^*h(x) = \int_{S_x} h_{\psi} (x,v) \dif v
    \)
    where $h_{\psi}(x,v)= h(\varphi_{-\tau(x,-v)}(x,v))$ for all $(x,v)\in SM$, and $L^2_{\mu}(\partial_+ SM)$ is the $L^2$ space with measure $\mu \dif(\partial SM)$.
\end{lemma}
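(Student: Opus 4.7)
The plan is to reduce the identity to the Santal\'o formula of Lemma~\ref{lem:San1-AC22} by building the right flow--invariant integrand on $SM$ out of $f$ and $h$. First I would unfold the left-hand side directly from the definitions of $I$ and of the weighted inner product on $\partial_+ SM$:
\[
(If,h)_{L^2_\mu(\partial_+ SM)}
= \int_{\partial_+ SM}\int_0^{\tau(x,v)} f(\gamma(t,x,v))\,h(x,v)\,\mu(x,v) \dif t \dif{(\partial SM)}.
\]

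The key step is to recognize $h_\psi$, as defined in the statement, as the (essentially unique) extension of $h$ to $SM$ that is constant along the geodesic flow. Concretely, I would verify the identity $h(x,v) = h_\psi(\varphi_t(x,v))$ for every $(x,v)\in\partial_+ SM$ and every $t\in[0,\tau(x,v)]$. This holds because flowing $\varphi_t(x,v)$ backward along its geodesic returns exactly to $(x,v)\in\partial_+ SM$ after time $t$; hence $\tau(\varphi_t(x,v),-\dot\gamma(t,x,v)) = t$ and $\varphi_{-t}(\varphi_t(x,v)) = (x,v)$, so the definition of $h_\psi$ gives $h_\psi(\varphi_t(x,v)) = h(x,v)$. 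The compact support of $h$ in $(\partial_+ SM)^{int}$ then ensures $h_\psi$ is smooth on all of $SM$.

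Setting $F(y,w) := f(y)\,h_\psi(y,w)$ and using the identity above rewrites the previous display as
\[
(If,h)_{L^2_\mu(\partial_+ SM)}
= \int_{\partial_+ SM}\int_0^{\tau(x,v)} F(\varphi_t(x,v))\,\mu(x,v) \dif t \dif{(\partial SM)},
\]
which by Lemma~\ref{lem:San1-AC22} equals $\int_{SM} F \dif{(SM)}$. Finally I would disintegrate the Sasaki measure over the bundle projection $SM\to M$ as $\dif{(SM)} = \dif v \, \dif{V_g}(x)$, giving
\[
\int_{SM} f(y)\,h_\psi(y,w) \dif{(SM)} = \int_M f(x)\left(\int_{S_x} h_\psi(x,v) \dif v\right) \dif{V_g}(x) = (f,I^*h),
\]
which is the claimed identity.

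The only point that really requires care is the verification that $h_\psi$ is the flow--invariant extension of $h$ along geodesics emanating from $\partial_+ SM$; once that is in hand, the rest is a direct bookkeeping exercise combining Santal\'o's formula with the fibration of $SM$ over $M$.
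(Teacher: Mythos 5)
Your proof is correct and follows the standard argument (the one in Paternain–Salo–Uhlmann, which the paper cites for this lemma): recognize $h_\psi$ as the flow-invariant extension of $h$, rewrite the inner product as an integral of $F(y,w)=f(y)h_\psi(y,w)$ pulled back along the flow, apply Santal\'o's formula, and then disintegrate the Sasaki measure over the fibers $S_xM$. The key verification that $\tau(\varphi_t(x,v),-\dot\gamma(t,x,v))=t$ for $(x,v)\in\partial_+SM$ and $t\in[0,\tau(x,v)]$ is stated and justified correctly, so there is no gap.
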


\begin{lemma} \label{lemma_i_bounded}
	For every non-negative integer $k$, the ray transform $I$ is a bounded linear operator from $H^k(M)$ to $H^k(\PD_{+}SM)$.
\end{lemma}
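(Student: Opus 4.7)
I would prove this by induction on $k\ge 0$. The base case $k=0$ follows from the Santal\'o formula (Lemma \ref{lem:San1-AC22}) combined with Cauchy--Schwarz: since $(M,g)$ is nontrapping with strictly convex boundary, $D := \sup_{SM}\tau$ is finite, and
\[
|If(x,v)|^2 \leq \tau(x,v)\int_0^{\tau(x,v)} |f(\gamma(t,x,v))|^2 \dif{t} \leq D\int_0^{\tau(x,v)} |f(\gamma(t,x,v))|^2 \dif{t}.
\]
Multiplying by $\mu(x,v)$ and integrating over $\partial_+SM$ then yields $\|If\|_{L^2(\partial_+SM)}^2 \leq D\,|\mathbb{S}^{n-1}|\,\|f\|_{L^2(M)}^2$ by Lemma \ref{lem:San1-AC22}, where the $L^2$ norm on $\partial_+SM$ is taken with the natural Santal\'o-weighted measure $\mu\,\dif{(\partial SM)}$ (the standard convention in this setting; cf.~\cite{PSU_book}).

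For the inductive step, I would exploit the transport equation satisfied by the natural extension of $If$ to $SM$. Set $u(x,v) := \int_0^{\tau(x,v)} f(\gamma(t,x,v))\,\dif{t}$ for $(x,v)\in SM$, which solves $Xu=-f$ in $SM^{\mathrm{int}}$ with $u|_{\partial_-SM}=0$, where $X$ is the geodesic vector field; note also that $u|_{\partial_+SM}=If$. Given a smooth vector field $Y$ on $\partial_+SM$, I extend it to a vector field $\tilde Y$ on $SM$ by propagation along the geodesic flow,
\[
\tilde Y_{\varphi_t(x,v)} := d\varphi_t|_{(x,v)}\,Y_{(x,v)}, \qquad (x,v)\in\partial_+SM,\ t\in[0,\tau(x,v)],
\]
so that $[X,\tilde Y]=0$ by construction. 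Applying $\tilde Y$ to the transport equation then gives $X(\tilde Y u) = -\tilde Y f$ in $SM^{\mathrm{int}}$, with $\tilde Y u$ vanishing on $\partial_-SM$ and with trace $Y(If)$ on $\partial_+SM$. The base case applied to $\tilde Y u$ yields
\[
\|Y(If)\|_{L^2(\partial_+SM)} \leq C\,\|\tilde Y f\|_{L^2(SM)} \leq C'\,\|f\|_{H^1(M)},
\]
since $d\varphi_t$ is uniformly bounded for $t\in[0,D]$, so $\tilde Y f$ is pointwise controlled by a constant times $|\nabla_g f|$. Iterating this argument with $k$ extended vector fields $\tilde Y_1,\ldots,\tilde Y_k$, and absorbing commutators---which are themselves $X$-commuting and yield lower-order contributions handled by the inductive hypothesis---produces the desired $H^k$ bound.

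The main obstacle I expect is the regularity of the extension $\tilde Y$ at the glancing set $\partial_0SM$, where $\tau$ and $\varphi_t$ fail to be smooth, together with the need to justify that $\tilde Yu|_{\partial_-SM}$ genuinely vanishes (rather than only its tangential part) so that the base-case estimate is applicable. I would address this by first proving the estimate for $f\in C_c^\infty(M^{\mathrm{int}})$---for which contributions near glancing are negligible and $u$ has improved regularity near $\partial M$---and then extending to all $f\in H^k(M)$ by density. A classical alternative, as in \cite{Sharafutdinov_book}, bypasses these issues by computing directly in local coordinates on $\partial_+SM$ and using the strict convexity of $\partial M$ to control the boundary terms arising when differentiating $\tau$.
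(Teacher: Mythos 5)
The paper does not prove this lemma itself; it only cites \cite[Chapters 3 and 4]{PSU_book}, so I am judging your argument on its own terms. Your base case $k=0$ is correct and is exactly the standard proof (Cauchy--Schwarz along each geodesic plus the Santal\'o formula, with the $\mu$-weighted measure on $\partial_+SM$). The idea of commuting flow-invariant vector fields $\tilde Y$ through the transport equation is also a legitimate device for the interior regularity of $u$.

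The inductive step, however, has a genuine gap at the point you yourself flag and then dispose of too quickly. Differentiating $If(x,v)=\int_0^{\tau(x,v)}f(\gamma(t,x,v))\dif t$ in a direction $Y$ tangent to $\partial_+SM$ produces, besides $I(\tilde Yf)$, the boundary term $Y(\tau)\,f(\gamma(\tau(x,v),x,v))$ coming from the upper limit of integration (equivalently, $\tilde Yu|_{\partial_-SM}=Y(\tau)\,f|_{\partial M}$ composed with the scattering relation, since $d\varphi_\tau(Y)$ differs from a vector tangent to $\partial_-SM$ by $-Y(\tau)X$). Estimating this term in $L^2_\mu(\partial_+SM)$ is the actual content of the lemma, because $Y(\tau)$ blows up at the glancing set $\partial_0SM$ and only strict convexity saves the integral; this is what the direct computation in \cite{Sharafutdinov_book} that you mention in passing is doing. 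Your proposed workaround --- prove the estimate for $f\in C_c^\infty(M^{\mathrm{int}})$ and conclude by density --- does not close the argument: for $k\ge 1$ the closure of $C_c^\infty(M^{\mathrm{int}})$ in $H^k(M)$ is $H^k_0(M)$, a proper subspace, so you obtain boundedness only on $H^k_0(M)$ and not on $H^k(M)$ as stated. The full statement is what the paper needs: in the proof of Theorem \ref{thm:1-AC22} the lemma is applied with $k=2$ to $I^*Ip$ on $M_1$, a function that has no reason to vanish on $\partial M_1$. So either the boundary term must be estimated directly (Sharafutdinov's route), or one must devise a bounded reduction of a general $f\in H^k(M)$ to one vanishing to high order at $\partial M$ together with an explicit treatment of the remainder; as written, neither is carried out.
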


The proofs of these results can be found in \cite{PSU_book}*{Chapters 3 and 4}. We will also need the following facts on the normal operator of the geodesic ray transform on simple manifolds which follow from \cite{Stefanov_Uhlmann_duke} (see also \cite{PSU_book}*{Chapter 8}).

\begin{lemma} \label{lemma_simple_normal}
Let $(M,g)$ be a simple manifold. Then $I^* I$ is an elliptic pseudodifferential operator of order $-1$ in $M^{\mathrm{int}}$. Given $s \in \R$ and a compact set $K \subset M^{\mathrm{int}}$, there is $C > 0$ so that one has the inequalities 
\[
C^{-1} \norm{f}_{H^s(M)} \leq \norm{I^* I f}_{H^{s+1}(M)} \leq C \norm{f}_{H^s(M)}
\]
for any $f \in H^s(M)$ with $\mathrm{supp}(f) \subset K$.
\end{lemma}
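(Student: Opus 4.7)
The plan is to follow the classical Stefanov--Uhlmann argument: first compute an explicit integral representation of $I^*I$ using the Santal\'o-type formula of Lemma \ref{lem:San2-AC22}, then recognize the result as an elliptic classical pseudodifferential operator of order $-1$ via a change of variables that uses the simplicity hypothesis. First I would extend $(M,g)$ to a slightly larger simple manifold $(M_1,g)$ with $M \ssubset M_1^{\mathrm{int}}$, and extend $f \in C^\infty_c(M^{\mathrm{int}})$ by zero to $M_1$; this buys interior room so that the kernel of $I^*I$ can be analyzed without boundary interference at points of $M$.

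Next, for $x \in M^{\mathrm{int}}$, I would write
\[
I^*I f(x) = \int_{S_x M_1} \int_{-\tau(x,-v)}^{\tau(x,v)} f(\gamma(t,x,v))\dif t \dif v,
\]
combining forward and backward portions of the geodesic through $x$. Because $(M_1,g)$ is simple, the exponential map $\exp_x \colon B_x \to M_1$ is a diffeomorphism from a star-shaped domain in $T_x M_1$ onto $M_1$, with no conjugate points along any geodesic segment. Switching to polar normal coordinates $y = \exp_x(tv)$, $t > 0$, $v \in S_x M_1$, and using symmetry in $t \mapsto -t$, I would obtain
\[
I^*I f(x) = 2\int_{M_1} \frac{a(x,y)}{d_g(x,y)^{n-1}}\, f(y) \dif V_g(y),
\]
where $a(x,y)$ is smooth and positive on a neighborhood of the diagonal (it comes from the inverse Jacobian of $\exp_x$, which is bounded away from zero by the no-conjugate-points condition). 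This kernel is precisely of the form studied in \cite{Stefanov_Uhlmann_duke}: it defines a classical pseudodifferential operator of order $-1$ whose principal symbol is a positive multiple of $|\xi|_{g^*}^{-1}$, hence $I^*I$ is elliptic of order $-1$ on $M^{\mathrm{int}}$.

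With ellipticity in hand, the two-sided estimates follow from the standard pseudodifferential calculus, combined with the fact that $f$ is supported in a fixed compact set $K \ssubset M^{\mathrm{int}}$. The upper bound $\nrm[H^{s+1}(M)]{I^*I f} \le C \nrm[H^s(M)]{f}$ is simply the $H^s \to H^{s+1}$ continuity of an order $-1$ operator. For the lower bound, one constructs a properly supported parametrix $Q$ of order $1$, defined on a neighborhood $U$ of $K$ with $\overline U \subset M^{\mathrm{int}}$, so that $Q(I^*I) = \chi + R$ where $\chi \in C^\infty_c(U)$ equals $1$ near $K$ and $R$ is smoothing. Then for $f$ supported in $K$,
\[
\nrm[H^s(M)]{f} = \nrm[H^s(M)]{\chi f} \le \nrm[H^s]{Q(I^*Ifq)} + \nrm[H^s]{Rf} \le C\bigl(\nrm[H^{s+1}(M)]{I^*I f} + \nrm[H^{s-1}(M)]{f}\bigr),
\]
and the lower-order remainder is absorbed by a standard iteration or by Peetre's trick, using $\nrm[H^{s-1}]{f} \le \varepsilon \nrm[H^s]{f} + C_\varepsilon \nrm[L^2]{f}$ together with injectivity of $I$ on simple manifolds to eliminate the $L^2$ term.

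The main obstacle I anticipate is not ellipticity per se, but the careful handling of the boundary: the representation of $I^*I$ as a $\Psi$DO breaks down near $\partial M$, because the integration limits $\tau(x, \pm v)$ become non-smooth when $x$ approaches $\partial M$ along tangential directions. This is why the statement requires $\mathrm{supp}(f) \subset K \subset M^{\mathrm{int}}$ and why we extend to $M_1$; restricting to a compact interior set $K$ keeps us in the smooth pseudodifferential regime and justifies applying the parametrix construction locally without worrying about boundary behavior of symbols.
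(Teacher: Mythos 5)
Your proof is correct and follows precisely the Stefanov--Uhlmann argument that the paper invokes only by citation to \cite{Stefanov_Uhlmann_duke} and \cite[Chapter 8]{PSU_book}: the kernel representation $a(x,y)\,d_g(x,y)^{-(n-1)}$ from the Santal\'o/polar change of variables, the order $-1$ ellipticity coming from the no-conjugate-points condition, and the interior parametrix plus injectivity-of-$I$ absorption are exactly the standard ingredients. One small technical correction for general $s\in\R$: the lower-order remainder is most cleanly absorbed as $\nrm[H^{s-1}(M)]{f}$ using Rellich compactness of $H^s(K)\hookrightarrow H^{s-1}$ together with injectivity of $I^*I$ (upgraded from $L^2$ to distributional $f$ by elliptic regularity), rather than through the interpolation to $L^2$ that you quote, which tacitly needs $s\geq 1$ and a compact embedding into $L^2$.
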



\section{Resolvent estimate} \label{sec:resol-AC22}

The proofs of the main theorems are based on constructing approximate geometrical optics or Gaussian beam type solutions. In order to convert these approximate solutions to exact solutions, we will need the following solvability result at high frequencies.

\begin{proposition} \label{prop:resol-AC22}
    Let $(M,g)$ be a compact nontrapping manifold with smooth boundary, and let 
    $q \in L^\infty(M)$ with $\nrm[L^\infty(M)]{q} \leq B$.
    There are $C = C(M,g) > 0$ and $\lambda_0 = \lambda_0(M,g,B) > 0$ so that for any $\lambda \geq \lambda_0$ and any $f \in L^2(M)$, the equation 
    \[
    (-\Delta_g   - \lambda^2+q) u = f \text{ in $M$}
    \]
    has a solution $u \in H^2(M)$ with
    \[
    \lambda \norm{u}_{L^2(M)} + \norm{\df u}_{L^2(M)} + \lambda^{-1} \norm{\nabla^2 u}_{L^2(M)}
    \leq C\nrm[L^2(M)]{f}.
    \]
\end{proposition}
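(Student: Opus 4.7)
The proposition is a high-frequency resolvent estimate with the $\lambda^{-1}$ gain typical for nontrapping geometries. The plan is to reduce it to a standard semiclassical resolvent bound on a slightly larger nontrapping manifold equipped with a complex absorbing potential, and then bootstrap to $H^1$ and $H^2$ bounds on $M$.

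\textbf{Setup.} First I would embed $(M,g)$ isometrically into a slightly larger compact nontrapping Riemannian manifold $(M_1,g)$ with smooth boundary, so that $M \subset M_1^{\mathrm{int}}$. Extend $q$ by zero to $\tilde q \in L^\infty(M_1)$, and choose a cutoff $W \in C^\infty_c(M_1^{\mathrm{int}} \setminus M)$ with $W \geq 0$ and $W > 0$ on an open set separating $M$ from $\partial M_1$. Define the complex-absorbed operator
\[
P_\lambda := -\Delta_g - \lambda^2 + \tilde q - i\lambda W
\]
acting on $H^2(M_1) \cap H^1_0(M_1)$ with Dirichlet boundary condition on $\partial M_1$.

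\textbf{Core $L^2$ bound.} The key step is to show that there exist $C, \lambda_0$ depending on $(M_1,g), W, B$ such that, for $\lambda \geq \lambda_0$,
\[
\lambda \|v\|_{L^2(M_1)} \leq C\|P_\lambda v\|_{L^2(M_1)}, \qquad v \in H^2(M_1)\cap H^1_0(M_1).
\]
I would argue by contradiction using semiclassical defect measures with parameter $h = 1/\lambda$. If the bound failed one would obtain $\lambda_k \to \infty$ and $v_k$ with $\|v_k\|_{L^2}=1$ and $\|P_{\lambda_k} v_k\|_{L^2}=o(\lambda_k)$. After extracting a subsequence, the semiclassical defect measure $\mu$ of $(v_k)$ on $T^*M_1$ is a nonzero positive Radon measure supported in the characteristic set $\{|\xi|_g = 1\}$, invariant under the broken bicharacteristic flow on $SM_1$, and satisfies $\int W\, d(\pi_*\mu) = 0$ (from pairing $P_{\lambda_k} v_k$ with $v_k$ and taking imaginary parts). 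Since $(M_1,g)$ is nontrapping, every flow orbit reaches $\partial M_1$ in finite time and hence crosses $\{W > 0\}$; flow invariance of $\mu$ then forces $\mu \equiv 0$, a contradiction. Fredholmness of $P_\lambda$ (the imaginary part is a relatively compact perturbation) combined with injectivity yields invertibility.

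\textbf{Transfer to $M$ and higher-order estimates.} Given $f \in L^2(M)$, extend by zero to $\tilde f \in L^2(M_1)$ and set $u := (P_\lambda^{-1}\tilde f)|_M$. Because $W \equiv 0$ on $M$, $u$ solves $(-\Delta_g - \lambda^2 + q)u = f$ in $M$ with $\lambda \|u\|_{L^2(M)} \leq C\|f\|_{L^2(M)}$. For the gradient bound, pair $P_\lambda v = \tilde f$ with $\overline v$ on $M_1$, integrate by parts, take real parts, and use the $L^2$ bound to absorb the $\lambda^2\|v\|_{L^2}^2$ term, yielding $\|dv\|_{L^2(M_1)} \leq C\|f\|_{L^2(M)}$. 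For the Hessian bound, rewrite $-\Delta_g v = \tilde f + (\lambda^2 - \tilde q + i\lambda W)v$ and apply standard interior elliptic regularity on the pair $M \Subset M_1^{\mathrm{int}}$:
\[
\|\nabla^2 u\|_{L^2(M)} \leq C\bigl(\|\Delta_g v\|_{L^2(M_1)} + \|v\|_{L^2(M_1)}\bigr) \leq C\lambda \|f\|_{L^2(M)},
\]
and division by $\lambda$ gives the claimed estimate.

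\textbf{Main obstacle.} The crux of the argument is the nontrapping semiclassical resolvent bound in Step 2. The delicate points are (a) the defect measure must be shown invariant under the \emph{broken} bicharacteristic flow at $\partial M_1$, which requires Melrose--Sj\"ostrand-type propagation of singularities at the Dirichlet boundary, and (b) one must position $W$ so that every billiard orbit in $SM_1$ meets its support, which is arranged by placing $W$ in an annular region just inside $\partial M_1$ and invoking the nontrapping hypothesis. An alternative, more self-contained route replaces the defect-measure argument by a positive commutator / Morawetz multiplier identity using an escape function $\varphi$ on $M_1$ satisfying $\mathrm{Hess}_g\varphi(\dot\gamma,\dot\gamma) \geq c > 0$ along the geodesic flow; such $\varphi$ exist on nontrapping manifolds and the multiplier $X = \nabla\varphi$ yields the estimate via direct integration by parts, with boundary terms controlled by the CAP.
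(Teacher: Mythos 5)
Your proposal is correct in outline but follows a genuinely different route from the paper. The paper embeds $M$ into a \emph{closed} manifold $N$ of the same dimension and proves the a priori estimate $h\nrm[H^{s+2}_{\mathrm{scl}}(N)]{u} \le C\nrm[H^s_{\mathrm{scl}}(N)]{(-h^2\Delta_g - I)u}$ only for $u \in C^\infty_c(M^{\mathrm{int}})$, via a microlocal splitting $u = Bu + (I-B)u$ (elliptic estimate off the characteristic set, positive commutator with an escape function $a$ satisfying $H_p a > 0$ on $S^*M$ plus the semiclassical G{\aa}rding inequality near it); existence then follows by Hahn--Banach duality. Your primary route instead works on a compact extension $M_1$ \emph{with boundary}, Dirichlet conditions, and a complex absorbing potential, and proves the resolvent bound qualitatively by contradiction with semiclassical defect measures. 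The main cost of your choice is exactly the point you flag as delicate (a): because you impose Dirichlet conditions on $\partial M_1$, you need invariance of the defect measure under the \emph{broken} flow, i.e.\ Melrose--Sj\"ostrand propagation at a boundary including glancing points — substantial machinery that the paper avoids entirely, since compactly supported test functions on a closed manifold see no boundary at all. A second, milder difference: the contradiction argument is non-quantitative (no explicit $\lambda_0(B)$), whereas the paper's commutator computation is; to get uniformity in $q$ you should, as the paper does in its Corollary, first prove the $q=0$ estimate and absorb $q$ as an $O(h^2)$ perturbation. Your ``alternative route'' via a Morawetz multiplier built from an escape function is essentially the paper's positive commutator argument, so if you adopt it (and move the construction to a closed extension to kill the boundary terms rather than controlling them by the CAP) you recover the paper's proof. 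Your transfer steps — Fredholmness of index zero plus injectivity for solvability, integration by parts for the gradient bound, and interior elliptic regularity for the Hessian — are all correct and are a perfectly good substitute for the paper's duality argument and semiclassical Sobolev bookkeeping.
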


The estimate given in Proposition \ref{prop:resol-AC22} resembles a resolvent estimate in scattering theory, where it is well known that a nontrapping assumption is required for such an estimate to hold.
These estimates are typically given on noncompact manifolds with suitable assumptions at infinity. See e.g.~\cite{Wunsch2012} for a discussion on such estimates (note that if one excludes a small set of frequencies, this kind of estimate may hold for general geometries \cite{LSW21}).
Our estimate on compact manifolds with boundary is even simpler, and we give a proof based on a positive commutator argument.
For the proof it is convenient to switch to semiclassical notation and write $h = \lambda^{-1}$. See \cite{zworski2012semi} for the semiclassical analysis facts used below.

We may assume that $M$ is embedded in a closed manifold $(N,g)$ having the same dimension, and for all $s \in \R$ we may consider the semiclassical Sobolev norm
\[
\norm{u}_{H^s_{\mathrm{scl}}(N)} = \norm{(I-h^2 \Delta_g)^{s/2} u}_{L^2(N)}
\]
where $(I-h^2 \Delta_g)^{s/2}$ is defined via the spectral theorem.
Proposition \ref{prop:resol-AC22} will follow by a standard duality argument from the next {\it a priori} estimate with $s=0$ (see e.g.~\cite{DKSU}*{Proposition 4.4} for this duality argument). We employ a generic constant $C$ throughout the manuscript, the value of which may vary from line to line.


\begin{lemma} \label{lem:resol-AC22}
	Let $(M,g)$ be a compact nontrapping manifold with smooth boundary, and let $M$ be embedded in a closed manifold $(N,g)$ having the same dimension.
	Let $s \in \R$.
	There are $C > 0$, $h_0 > 0$ such that for $0 < h \leq h_0$, one has 
	\[
	h \norm{u}_{H^{s+2}_{\mathrm{scl}}(N)} \leq C \norm{(-h^2 \Delta_g - I) u}_{H^s_{\mathrm{scl}}(N)}, \quad  u \in C^{\infty}_c(M^{\mathrm{int}}).
	\]
\end{lemma}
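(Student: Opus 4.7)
The plan is to prove the semiclassical a priori estimate by a positive commutator (Mourre-type) argument, with the nontrapping hypothesis on $(M,g)$ feeding in as the existence of an escape function on $T^*N$. Writing $P := -h^2\Delta_g - I$, the semiclassical principal symbol is $p(x,\xi) = |\xi|_g^2 - 1$, the characteristic set is the unit cosphere bundle $\Sigma = \{|\xi|_g = 1\}$, and on $\Sigma$ the Hamilton vector field $H_p$ equals (twice) the geodesic vector field. Since $u \in C^\infty_c(M^{\mathrm{int}})$, we view $u$ as a smooth compactly supported function on the closed manifold $N$, so that all semiclassical pseudodifferential operators act on $u$ in the usual way. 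I treat the case $s=0$ first.

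The crucial ingredient is an escape function $a \in C^\infty_c(T^*N)$, microlocalized in a small neighbourhood of $\Sigma$, with $H_p a \ge 0$ on $T^*N$ and $H_p a \ge c_0 > 0$ on a neighbourhood of $\Sigma \cap \pi^{-1}(\overline M)$. This is where the nontrapping assumption enters: since every geodesic of $(M,g)$ exits $M$ in some bounded time $T$, one may take (schematically)
\[
a(x,\xi) = \psi(|\xi|_g)\,\kappa(x,\xi)\int_0^T \chi(\gamma_{x,\xi}(t))\,dt,
\]
where $\chi$ is a cutoff supported in a slight enlargement of $M$ in $N$, $\psi$ localizes to $|\xi|_g \approx 1$, and $\kappa$ is a phase-space cutoff ensuring smoothness and compact support. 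Differentiating, $H_p a$ reduces to $\chi$ on the characteristic set within the region where $\kappa$ and $\psi$ are constant, so the required positivity and global nonnegativity follow by tailoring the cutoffs.

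Setting $A := \mathrm{Op}_h(a)$, the self-adjoint commutator identity gives
\[
2\,\mathrm{Im}\langle Pu, Au\rangle = \langle i[A,P]u, u\rangle = h\langle \mathrm{Op}_h(H_p a)u, u\rangle + O(h^2)\|u\|_{L^2}^2,
\]
and the sharp G\aa rding inequality applied to $H_p a \ge 0$, together with the lower bound $H_p a \ge c_0$ on $\Sigma \cap \pi^{-1}(\overline M)$ and $\mathrm{supp}\,u \subset M$, yields
\[
\langle \mathrm{Op}_h(H_p a)u, u\rangle \ge c_0\|\widetilde B u\|_{L^2}^2 - C h\|u\|_{L^2}^2
\]
for some $\widetilde B$ microlocalizing to a neighbourhood of $\Sigma \cap \pi^{-1}(M)$. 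The semiclassical elliptic estimate $\|(I-\widetilde B)u\|_{L^2} \le C(\|Pu\|_{L^2} + h\|u\|_{L^2})$ (valid since $p$ is elliptic off $\Sigma$), combined with $\|u\|^2 \le 2\|\widetilde B u\|^2 + 2\|(I-\widetilde B)u\|^2$, Cauchy--Schwarz $|\langle Pu, Au\rangle| \le C\|Pu\|_{L^2}\|u\|_{L^2}$, and a standard absorption step using $\|Pu\|\|u\| \le \tfrac{ch}{4}\|u\|^2 + Ch^{-1}\|Pu\|^2$, yield $h\|u\|_{L^2} \le C\|Pu\|_{L^2}$. Since $(I-h^2\Delta_g) = P + 2I$, the trivial bound $\|u\|_{H^2_{\mathrm{scl}}(N)} \le \|Pu\|_{L^2} + 2\|u\|_{L^2}$ upgrades this to $h\|u\|_{H^2_{\mathrm{scl}}(N)} \le C\|Pu\|_{L^2}$.

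For general $s$ I would rerun the same argument with $a$ replaced by $a\langle\xi\rangle^{2s}$: the Poisson bracket $\{|\xi|_g^2, \langle\xi\rangle^{2s}\}$ vanishes, so $H_p(a\langle\xi\rangle^{2s}) = \langle\xi\rangle^{2s} H_p a \ge 0$, and the same G\aa rding/elliptic/absorption steps now deliver control of $\|u\|_{H^s_{\mathrm{scl}}}^2$, with the identity $(I-h^2\Delta_g) = P+2I$ again promoting the estimate from $H^s_{\mathrm{scl}}$ to $H^{s+2}_{\mathrm{scl}}$. The main obstacle is the construction of the escape function itself: the ambient closed manifold $N$ need not be nontrapping, so one must localize near $\overline M$ so that the positivity of $H_p a$ is extracted purely from the nontrapping of $(M,g)$ while $a$ still extends to a smooth, compactly supported symbol on all of $T^*N$. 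Everything downstream---G\aa rding, elliptic regularity, absorption---is standard semiclassical machinery once $a$ is in hand.
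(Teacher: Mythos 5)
Your overall architecture is the same as the paper's: a positive commutator estimate with an escape function quantized to $A$, sharp G\aa rding for the commutator term, an elliptic estimate away from the characteristic set $\{|\xi|_g=1\}$, an absorption step, and the trivial upgrade to $H^{s+2}_{\mathrm{scl}}$ via $-h^2\Delta_g = (P) + 2I$. The treatment of general $s$ by conjugating with $\langle\xi\rangle^{2s}$ is a legitimate variant of the paper's route (which instead applies the $s=0$ estimate to $\chi(I-h^2\Delta_g)^{s/2}u$ on a slightly enlarged nontrapping $M_1$ and commutes the cutoff), and your use of a globally nonnegative $H_pa$ plus sharp G\aa rding would, if available, streamline the cross terms that the paper handles via the $Bu$, $(I-B)u$ decomposition.

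The genuine gap is exactly the point you flag but do not close: the escape function. Your schematic $a=\psi\,\kappa\int_0^T\chi(\gamma_{x,\xi}(t))\,dt$ does not have the stated properties. The correct primitive along the flow is $a=-\int_0^\infty\chi(\varphi_t)\,dt$, which gives $H_pa=\chi$; but on the closed ambient manifold $N$ the geodesic flow is recurrent, so geodesics re-enter $\mathrm{supp}\,\chi$ infinitely often and the integral diverges. Truncating at a finite $T$ gives $H_p\bigl(-\int_0^T\chi(\varphi_t)\,dt\bigr)=\chi-\chi(\varphi_T)$, which is neither globally nonnegative nor bounded below by $c_0$ over $M$, since no uniform $T$ places $\varphi_T(x,\xi)$ outside $\mathrm{supp}\,\chi$ for all $(x,\xi)\in S^*M$. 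The resolution (and what the paper does) is to take the escape function only on $S^*M$ from Duistermaat--H\"ormander \cite{DH72}*{Theorem 6.4.1} --- its existence is equivalent to nontrapping of $(M,g)$ --- extend it smoothly and homogeneously to $T^*N$ with no sign condition imposed away from $M$, and observe that global nonnegativity of $H_pa$ is not needed: by continuity $H_pa\geq c|\xi|_g$ persists for $|\xi|_g\sim1$ over a compact neighbourhood $M_1$ of $M$, and since $u$ is supported in $M$ the microlocalized piece $Bu$ lives over $M_1$ up to $O(h^\infty)$ errors, so the G\aa rding inequality only ever sees the region where positivity holds. Without this (or an equivalent device), the sharp G\aa rding step in your argument has no symbol to apply to.
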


\begin{proof}
	We first prove the estimate for $s=0$.
	Write $P = -h^2 \Delta_g$ and decompose $u$ as 
	\[
	u = Bu + (I-B)u
	\]
	where $B$ is a semiclassical pseudodifferential operator obtained by quantizing the symbol $b(x,\xi) := \psi(|\xi|_g) \in C^{\infty}(T^* N)$ where $\psi \in C^{\infty}_c(\R)$ with $\psi(t) = 1$ near $t = 1$ and $\psi = 0$ outside a small neighborhood of $t=1$.
	Denote the semiclassical principal symbol of $P-I$ by $r(x,\xi)$, so $r(x,\xi) = |\xi|_g^2 - 1$.
	Since $P-I$ is semiclassically elliptic away from $\{ |\xi|_g = 1 \}$, we can find a symbol $q(x,\xi)$ of order $-2$ such that $q = r^{-1}$ in $\supp (1-b)$.
	This implies
	\[
	1 - b(x,\xi)
	= (1-b(x,\xi)) q(x,\xi) r(x,\xi), \quad (x,\xi) \in TM.
	\]
	By semiclassical calculus, see \cite{zworski2012semi}*{\S 14.2}, we have 
	\[
	I-B = \mathrm{Op}((1-b) q) \mathrm{Op}(r) + h \Psi^{-1} = \mathrm{Op}((1-b) q) (P-I) + h \Psi^{-1}.
	\]
	From this one obtains the estimate 
	\begin{align*}
		\norm{(I-B)u}_{H^2_{\mathrm{scl}}(N)} &\leq \norm{\mathrm{Op}((1-b) q) (P-I) u}_{H^2_{\mathrm{scl}}(N)} + Ch \norm{u}_{H^1_{\mathrm{scl}}(N)}  \\
		&\leq C \norm{(P-I) u}_{L^2(N)} + C h \norm{u}_{H^1_{\mathrm{scl}}(N)} 
	\end{align*}
	valid for $u \in C^{\infty}(N)$. Writing $u = (I-B)u + Bu$ on the right, it follows that 
	\begin{equation}
	\norm{(I-B)u}_{H^2_{\mathrm{scl}}(N)} \leq C \norm{(P-I) u}_{L^2(N)} + Ch \norm{Bu}_{H^1_{\mathrm{scl}}(N)} \label{eq:imbu-AC22}.
	\end{equation}
	
	We now proceed to an estimate for $Bu$, which is microlocalized to a small neighborhood of $\{ |\xi|_g = 1 \}$. To do this we invoke the positive commutator method. Assume that we can find a formally self-adjoint linear operator $A: C^{\infty}(N) \to C^{\infty}(N)$ such that 
	\begin{align*}
		\norm{Au}_{L^2(N)} &\leq C \norm{u}_{H^1_{\mathrm{scl}}(N)}, \\
		(\I [P,A] u, u)_{L^2(N)} &\geq c h \norm{Bu}_{H^1_{\mathrm{scl}}(N)}^2 - C h \norm{(I-B)u}_{H^1_{\mathrm{scl}}(N)}^2,
	\end{align*}
	for any $u \in C^{\infty}_c(M^{\mathrm{int}})$ and $0 < h \leq h_0$. We can then make the following computation:
	\begin{align*}
		c h \norm{Bu}_{H^1_{\mathrm{scl}}(N)}^2 - C h \norm{(I-B)u}_{H^1_{\mathrm{scl}}(N)}^2 &\leq (\I [P,A] u, u) = (\I [P-I,A] u, u) \\
		&= \I (Au, (P-I)u) - \I ((P-I)u, Au).
	\end{align*}
	By using Cauchy-Schwarz with $\epsilon$, and since $\norm{Au}_{L^2(N)} \leq C \norm{u}_{H^1_{\mathrm{scl}}(N)}$, we have
	\begin{equation} \label{eq:BAu-AC22}
		c h \norm{B u}_{H^1_{\mathrm{scl}}(N)}^2
		\leq \epsilon h \norm{u}_{H^1_{\mathrm{scl}}(N)}^2 + \frac{1}{\epsilon h} \norm{(P-I)u}_{L^2(N)}^2 + C h \norm{(I-B)u}_{H^1_{\mathrm{scl}}(N)}^2.
	\end{equation}
	Therefore,
	\begin{align*}
		&h^2 \norm{u}_{H^1_{\mathrm{scl}}(N)}^2
		  \leq 2 h^2 \norm{Bu}_{H^1_{\mathrm{scl}}(N)}^2 + 2h^2 \norm{(I-B)u}_{H^1_{\mathrm{scl}}(N)}^2 \\
		& \leq C\epsilon h^2 \norm{u}_{H^1_{\mathrm{scl}}(N)}^2 + C \epsilon^{-1} \norm{(P-I)u}_{L^2(N)}^2 + C h^2 \norm{(I-B)u}_{H^1_{\mathrm{scl}}(N)}^2 \quad (\text{by~} \eqref{eq:BAu-AC22}) \\
		& \leq h^2(C\epsilon + Ch^2) \norm{u}_{H^1_{\mathrm{scl}}}^2 + (C\epsilon^{-1} + C h^2) \norm{(P-I) u}_{L^2(N)}^2. \quad (\text{by~} \eqref{eq:imbu-AC22})
	\end{align*}
	Choosing the value of $\epsilon$ so that $C \epsilon = 1/2$, we obtain the estimate 
	\begin{align*}
		h^2 \norm{u}_{H^1_{\mathrm{scl}}(N)}^2
		& \leq C \norm{(P-I)u}_{L^2(N)}^2.
	\end{align*}
	valid for all $u \in C^{\infty}_c(M^{\mathrm{int}})$ as long as one can find an operator $A$ satisfying the conditions given above.
	
	We construct the conjugate operator $A$ as a first order semiclassical pseudodifferential operator, 
	obtained as the Weyl quantization of a real valued symbol $a \in C^{\infty}(T^* N)$. The semiclassical principal symbol of $\I h^{-1} [P,A]$ is $\{p, a\} = H_p a$, where $p = |\xi|_g^2$ is the principal symbol of $P$ and $H_p$ is the Hamilton vector field of $p$.
	The assumption that $(M,g)$ is nontrapping means precisely that there is a function $a \in C^{\infty}(S^*M)$ (escape function) with $H_p a > 0$ in $S^* M$, where $S^* M$ denotes the unit cosphere bundle.
	See e.g.~\cite{DH72}*{Theorem 6.4.1}.
	We extend $a$ smoothly to $T^* N$ as a symbol that is homogeneous of degree one for $|\xi| \geq 1$. By continuity the function $H_p a$ satisfies
	\begin{equation} \label{eq:Hpa-AC22}
		H_p a(x,\xi) \geq c |\xi|_g^2, \quad \xi \in T^* M_1, \ |\xi|_g \sim 1,
	\end{equation}
	for some compact set $M_1 \subset N$ with $M \subset M_1^{\mathrm{int}}$.
	Note that \eqref{eq:Hpa-AC22} holds only for $|\xi|_g$ away from $0$, and we shall only apply this to $Bu$ which is supported near $|\xi|_g = 1$ in the phase space.
	Quantizing $a$ gives a semiclassical operator $A$ of order one.
	Using the semiclassical G{\aa}rding inequality \cite{zworski2012semi}*{Theorem 4.30} for $(\I h^{-1} [P,A] Bu, Bu)$ and Cauchy-Schwarz with $\epsilon$ for the other terms gives that
	\begin{align}
		(\I h^{-1} [P,A]u, u)
		& = (\I h^{-1} [P,A]Bu, Bu) + (\I h^{-1} [P,A]Bu, (I-B)u) \nonumber \\
		& \quad + (\I h^{-1} [P,A](I-B)u, Bu) + (\I h^{-1} [P,A](I-B)u, (I-B)u) \nonumber \\
		& \geq c \norm{Bu}_{H^1_{\mathrm{scl}}}^2 - \nrm[H_{\mathrm{scl}}^{-1}(N)]{\I h^{-1} [P,A] Bu} \nrm[H_{\mathrm{scl}}^1(N)]{(I-B)u} \nonumber \\
		& \quad - \nrm[H_{\mathrm{scl}}^{-1}(N)]{\I h^{-1} [P,A] (I-B)u} \big( \nrm[H_{\mathrm{scl}}^1(N)]{Bu} + \nrm[H_{\mathrm{scl}}^1(N)]{(I-B)u} \big) \nonumber \\
		& \geq c \norm{Bu}_{H^1_{\mathrm{scl}}}^2 - C \norm{(I-B)u}_{H^1_{\mathrm{scl}}}^2, 
		\label{eq:iPAu-AC22}
	\end{align}
	for all $u \in C^{\infty}_c(M^{\mathrm{int}})$.
	Here we used that $\I h^{-1} [P,A]$ is of order $2$ so it is a bounded map from $H^1_{\mathrm{scl}}(N)$ to $H^{-1}_{\mathrm{scl}}(N)$.
	This completes the construction of $A$.
	We have so far proved the following estimate for all $u \in C^{\infty}_c(M^{\mathrm{int}})$:
	\[
	h \norm{u}_{H^1_{\mathrm{scl}}(N)}
		\leq C \norm{(P-I)u}_{L^2(N)}.
	\]
	
	To prove the analogous estimate for general $s$, we may apply the above estimate in a small extension $(M_1,g)$ of $(M,g)$ (which is still nontrapping) to the function $\chi(I-h^2 \Delta_g)^{s/2} u$ where $\chi \in C^{\infty}_c(M_1^{\mathrm{int}})$ satisfies $\chi=1$ near $M$, and $u \in C^{\infty}_c(M^{\mathrm{int}})$. 
	Commuting the cutoff $\chi$ to the other side of $P-I$ produces commutator terms that are $O(h^{\infty})$ by pseudolocality and the support properties of $u$ and $\df \chi$, and these can be absorbed. See e.g.~\cite{DKSU}*{Lemma 4.3} for details.
	This argument gives
	\begin{equation} \label{eq:reso2-AC22}
		h \norm{u}_{H^{s+1}_{\mathrm{scl}}(N)} \leq C \norm{(-h^2 \Delta_g - I) u}_{H^s_{\mathrm{scl}}(N)}, \quad  u \in C^{\infty}_c(M^{\mathrm{int}}).
	\end{equation}
	
	Finally, to improve the left hand side of \eqref{eq:reso2-AC22} from $s+1$ to $s+2$, we do the following computation:
	\begin{align*}
		h\norm{u}_{H^{s+2}_{\mathrm{scl}}(N)}
		& = h \norm{(-h^2 \Delta_g - I + 2I) u}_{H^s_{\mathrm{scl}}(N)}
		\leq h \norm{(-h^2 \Delta_g - I) u}_{H^s_{\mathrm{scl}}(N)} + 2h \norm{u}_{H^s_{\mathrm{scl}}(N)} \nonumber \\
		& \leq h \norm{(-h^2 \Delta_g - I) u}_{H^s_{\mathrm{scl}}(N)} + C \norm{(-h^2 \Delta_g - I) u}_{H^{s-1}_{\mathrm{scl}}(N)} \nonumber \\
		& \leq C \norm{(-h^2 \Delta_g - I) u}_{H^s_{\mathrm{scl}}(N)}, \quad u \in C^{\infty}_c(M^{\mathrm{int}}),
	\end{align*}
	where in the second last line we used \eqref{eq:reso2-AC22}. The proof is complete.
\end{proof}

\begin{corollary} \label{cor:resol-AC22}
	Assume the conditions in Lemma \ref{lem:resol-AC22}, let $-2 \leq s \leq 0$, and let 
	$q \in L^\infty(M)$ with $\nrm[L^{\infty}(M)]{q} \le B$.
	Then there are $C = C(M,g,s) > 0$ and $h_0 = h_0(M,g,s,B) > 0$ such that for $0 < h \leq h_0$ one has 
	\[
	h \norm{u}_{H^{s+2}_{\mathrm{scl}}(N)}
	\leq C \norm{(-h^2 (\Delta_g - q(x)) - I) u}_{H^s_{\mathrm{scl}}(N)}, \quad u \in C^{\infty}_c(M^{\mathrm{int}}).
	\]
\end{corollary}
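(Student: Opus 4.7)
The plan is to treat the potential $q$ as a perturbation of the flat operator and reduce to Lemma \ref{lem:resol-AC22}. Concretely, write
\[
-h^2(\Delta_g - q) - I = (-h^2 \Delta_g - I) + h^2 q,
\]
so that for $u \in C^{\infty}_c(M^{\mathrm{int}})$,
\[
\|(-h^2 \Delta_g - I) u\|_{H^s_{\mathrm{scl}}(N)} \leq \|(-h^2(\Delta_g - q) - I) u\|_{H^s_{\mathrm{scl}}(N)} + h^2 \|q u\|_{H^s_{\mathrm{scl}}(N)}.
\]
Applying Lemma \ref{lem:resol-AC22} to the left-hand side yields
\[
h \|u\|_{H^{s+2}_{\mathrm{scl}}(N)} \leq C \|(-h^2(\Delta_g - q) - I) u\|_{H^s_{\mathrm{scl}}(N)} + C h^2 \|q u\|_{H^s_{\mathrm{scl}}(N)}.
\]

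The next step is to control $\|qu\|_{H^s_{\mathrm{scl}}(N)}$ by $\|u\|_{H^{s+2}_{\mathrm{scl}}(N)}$. The key observation, which is precisely what makes the restriction $-2 \leq s \leq 0$ natural, is that for these values of $s$ the semiclassical Sobolev norms satisfy
\[
\|v\|_{H^s_{\mathrm{scl}}(N)} \leq \|v\|_{L^2(N)} \leq \|v\|_{H^{s+2}_{\mathrm{scl}}(N)},
\]
since the symbol $(1+h^2|\xi|_g^2)^{s/2}$ is $\leq 1$ when $s \leq 0$ and $(1+h^2|\xi|_g^2)^{(s+2)/2} \geq 1$ when $s+2 \geq 0$. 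Combined with the bound $\|qv\|_{L^2(N)} \leq B \|v\|_{L^2(N)}$ coming from $\|q\|_{L^\infty(M)} \leq B$ and the fact that $u$ is supported in $M$, this gives
\[
\|qu\|_{H^s_{\mathrm{scl}}(N)} \leq \|qu\|_{L^2(N)} \leq B \|u\|_{L^2(N)} \leq B \|u\|_{H^{s+2}_{\mathrm{scl}}(N)}.
\]

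Plugging this into the previous estimate yields
\[
h \|u\|_{H^{s+2}_{\mathrm{scl}}(N)} \leq C \|(-h^2(\Delta_g - q) - I) u\|_{H^s_{\mathrm{scl}}(N)} + C B h^2 \|u\|_{H^{s+2}_{\mathrm{scl}}(N)}.
\]
Choosing $h_0 = h_0(M,g,s,B) > 0$ so small that $C B h_0 \leq 1/2$, the last term can be absorbed into the left-hand side for all $0 < h \leq h_0$, producing the claimed estimate (with a new constant $2C$).

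There is no real obstacle here, beyond book-keeping of the allowed range of $s$: the requirement $-2 \leq s \leq 0$ enters exactly when comparing $\|qu\|_{H^s_{\mathrm{scl}}}$ and $\|u\|_{H^{s+2}_{\mathrm{scl}}}$ via an $L^2$ intermediate norm, which is the only estimate available for multiplication by a general $L^\infty$ potential. Everything else is a triangle inequality and the hypothesis $\|q\|_{L^\infty(M)} \leq B$.
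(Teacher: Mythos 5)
Your proof is correct and follows essentially the same route as the paper: split off the $h^2 q$ term, bound $\nrm[H^s_{\mathrm{scl}}(N)]{qu} \le \nrm[L^2(N)]{qu} \le B \nrm[H^{s+2}_{\mathrm{scl}}(N)]{u}$ using $-2 \le s \le 0$, apply Lemma \ref{lem:resol-AC22}, and absorb the perturbation by taking $h_0 \lesssim 1/B$. The only difference is cosmetic (you apply the triangle inequality in the reverse direction from the paper's reverse-triangle-inequality formulation).
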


\begin{proof}
	We have
	\(
	\nrm[H^s_{\mathrm{scl}}(N)]{q u}
	\leq \nrm[L^2(N)]{q u}
	\leq \nrm[L^\infty(N)]{q} \nrm[H^{s+2}_{\mathrm{scl}}(N)]{u}
	\)
	provided $-2 \leq s \leq 0$.
	Then by Lemma \ref{lem:resol-AC22} we have
	\begin{align*}
		\norm{(-h^2 (\Delta_g - q(x)) - I) u}_{H^s_{\mathrm{scl}}(N)}
		& \geq \norm{(-h^2 \Delta_g - I) u}_{H^s_{\mathrm{scl}}(N)} - h^2 \norm{q u}_{H^s_{\mathrm{scl}}(N)} \\
		& \geq ch \norm{u}_{H^{s+2}_{\mathrm{scl}}(N)} - \nrm[L^\infty(N)]{q} h^2 \norm{u}_{H^{s+2}_{\mathrm{scl}}(N)}.
	\end{align*}
	Choosing $h_0 = c/(2 B)$ completes the proof.
\end{proof}


Now we are ready to prove Proposition \ref{prop:resol-AC22}.

\begin{proof}[Proof of Proposition \ref{prop:resol-AC22}]
	Denote $\mathbf{E} = \mathcal{L}_{q,h^{-1}} (C^{\infty}_c(M^{\mathrm{int}}))$. Then $\mathbf{E}$ is a subspace of $H^{-2}_{\mathrm{scl}}(N)$, and for $h$ small any element of $\mathbf{E}$ can be written uniquely as $\mathcal{L}_{q,h^{-1}} u$ for some $u \in C^{\infty}_c(M^{\mathrm{int}})$ by Corollary \ref{cor:resol-AC22}.
	Let $f  \in L^2(M)$, and define the linear operator $T \colon \mathbf{E} \rightarrow \mathbb R$ by
	\begin{align*}
		T(\mathcal{L}_{q,h^{-1}}^* z)
		= \langle f,z \rangle_{L^2(M)}, \quad  z \in C^{\infty}_c(M^{\mathrm{int}}),
	\end{align*}
	where $\mathcal{L}_{q,h^{-1}}^*$ is the dual operator of $\mathcal{L}_{q,h^{-1}}$.
	We have $\mathcal{L}_{q,h^{-1}}^* = \mathcal{L}_{q,h^{-1}}$.
	Corollary \ref{cor:resol-AC22} gives
	\begin{align*}
		|T(\mathcal{L}_{q,h^{-1}}^* z)|
		\leq \nrm[L^2(M)]{f} \nrm[L^2(M)]{z}
		\leq \nrm[L^2(M)]{f} C h \nrm[H^{-2}_{\mathrm{scl}}(N)]{\mathcal{L}_{q,h^{-1}}^* z}.
	\end{align*}
	This implies $T$ is a bounded linear operator on $\mathbf{E}$, thus by the Hahn-Banach theorem there exists a linear functional $ \hat{T}$ on $H^{-2}_{\mathrm{scl}}(N)$ that extends $T$ from $\mathbf{E} $ to $H^{-2}_{\mathrm{scl}}(N)$ such that 
	\begin{align*}
		\nrm{\hat T}
		\leq C h \nrm[L^2(M)]{f}.
	\end{align*}
	Because $H^{-2}_{\mathrm{scl}}(N)$ is the dual space of $H^{2}_{\mathrm{scl}}(N)$ and it is a Hilbert space, by the Riesz representation theorem there exists a function $v \in H^{2}_{\mathrm{scl}}(N)$ such that $\hat{T}(z)= \langle v,z \rangle$ for all $z \in C^{\infty}_c(M^{\mathrm{int}})$.
	Furthermore, $\nrm[H^{2}_{\mathrm{scl}}(N)]{v} = \nrm{\hat T} \leq C h \nrm[L^2(M)]{f}$.
	Now set $v|_{M}= u$, then for all $z \in C^{\infty}_c(M^{\mathrm{int}})$ we have
	\[
	\agl[\mathcal{L}_{q,h^{-1}} u, z]
	= \agl[u, \mathcal{L}_{q,h^{-1}}^* z]
	= \agl[v, \mathcal{L}_{q,h^{-1}}^* z]
	= \hat T(\mathcal{L}_{q,h^{-1}}^* z)
	= T(\mathcal{L}_{q,h^{-1}}^* z)
	= \agl[f,z].
	\]
	This gives existence of a solution with the desired estimate. 
\end{proof}

\section{The case of simple manifolds} \label{sec:smpl-AC22}

In this section we construct special solutions of \eqref{eq:1-AC22} on a simple manifold following arguments in \cite{DKSU}, and give the proof of Theorem \ref{thm:1-AC22}.


\subsection{Special solutions on simple manifolds} \label{subsec:simp-AC22}

Let $(M,g)$ be a simple manifold. We wish to construct solution of \eqref{eq:1-AC22} in the form of $u = e^{\I \lambda \phi}a + R$.
A straightforward computation gives
\begin{equation} \label{eq:La-AC22}
	\mathcal{L}_{q,\lambda} (e^{\I \lambda \phi} a)
	= e^{\I\lambda \phi} \big[ \lambda^2(|\df \phi|_{g}^2-1) a - \lambda \mathcal T_{g,\phi} a - (\Delta_g - q)a \big], \ \
	\mathcal T_{g,\phi}
	:= 2\I \agl[\df \phi, \df \cdot]_g + \I\Delta_g \phi.
\end{equation}
Here $\mathcal T_{g,\phi}$ is a first-order linear differential operator depending on $g$ and $\phi$.
Substituting the ansatz $u = e^{\I \lambda \phi}a + R$ into  \eqref{eq:1-AC22}, with the help of \eqref{eq:La-AC22} we see that $\mathcal{L}_{q,\lambda} u = 0$ provided that 
\begin{equation} \label{eq:lam0-AC22}
	\mathcal{L}_{q,\lambda} R
	= e^{\I\lambda \phi} \big[ -\lambda^2(|\df \phi|_{g}^2-1) a + \lambda \mathcal T_{g,\phi} a + (\Delta_g - q) a \big] \ \ \mbox{in} \ \ M.
\end{equation}
We shall construct a real-valued phase function $\phi$ and an amplitude $a$ by making the coefficients of $\lambda^2$ and $\lambda$ in \eqref{eq:lam0-AC22} to be zero so that \eqref{eq:lam0-AC22} can be simplified.

First, we solve $|\df \phi|_{g}^2 = 1$.
This non-linear PDE is known as the eikonal equation.
Since $M$ is simple, we can extend $M$ to a larger simple compact manifold $M_1$ such that $ M \subset M^{\mathrm{int}}_1$, where $M^{\mathrm{int}}_1$ signifies the interior of $M_1$.
Let $ y\in \partial M_1$. By the properties of simple manifolds \cite{PSU_book}*{Section 3.8}, any $x$ belonging to $M_1$ can be expressed as $x= \exp_{y}(r\theta)$ with certain $r>0$ and $\theta \in S_y M := \{\xi \in T_{y}M \,;\, |\xi|_{g}=1\}$.
Here the map $\exp_y$, parameterized by $y$, is the exponential map defined on $M_1$,
and it defines the so-called polar normal coordinates on $M$ by identifying $x$ with the coordinates $(r,\theta) \in \R^+ \times S_y M$.
In these coordinates, the metric $g$ can be represented as
\[
g|_{(r,\theta)} = \df r^2+ g_0(r,\theta) \df \theta^2
\]
where $g_0$ is another positive-definite Riemannian metric, and there holds $|g| = |g_0|$.
The coordinate $r$ can be used to define a distance function from a point $x$ to $y$ by setting $\mathrm{dist}_{g}(x,y) := r$.
We now choose
\begin{equation} \label{eq:dist-AC22}
	\phi(x)
	= \pm \mathrm{dist}_{g}(x,y)
	= \pm r, \quad x \in M, \, y \in \partial M_1,
\end{equation}
thus $\partial_\theta \phi = 0$, and so $|\df \phi|_{g}^2 = (\pm \partial_r r)^2 = 1$.
Hence the eikonal equation is solved, and we can simplify \eqref{eq:lam0-AC22} to 
\begin{equation} \label{eq:lam12-AC22}
	\mathcal{L}_{q,\lambda} R
	= e^{\I\lambda \phi} [\lambda \mathcal T_{g,\phi} a + (\Delta_g - q) a].
\end{equation}

Second, we fix an integer $J \in \mathbb N$, set $a_{-1} \equiv 0$ and look for an amplitude $a$ having the form $a = \sum_{j=-1}^J \lambda^{-j} a_j$. After substituting this into \eqref{eq:lam12-AC22}, it follows that
\begin{equation} \label{eq:lam1-AC22}
	\mathcal{L}_{q,\lambda} R
	= e^{\I\lambda \phi} \sum_{j=0}^{J} \lambda^{-j+1} [\mathcal T_{g,\phi} a_j + (\Delta_g - q) a_{j-1}] + e^{\I\lambda \phi} \lambda^{-J} (\Delta_g - q) a_J, \ \ \mbox{in} \ \ M.
\end{equation}
Because $a_{-1} \equiv 0$, the following transport equations for $a_j$ can be solved iteratively starting from $j = 0$ until $j=J$:
\begin{equation} \label{eq:trans-AC22}
	\mathcal T_{g,\phi} a_j = (-\Delta_g + q) a_{j-1}.
\end{equation}
Recall \eqref{eq:dist-AC22} and $\mathcal T_{g,\phi}$ defined in \eqref{eq:La-AC22}.
By the choice of $\phi$ we have $\agl[\df \phi, \df a_j]_g = \pm \partial_r a_j$.
This reduces the equation \eqref{eq:trans-AC22} to
\[
\pm 2 \I \partial_r a_j \pm \I |g|^{-\frac 1 2} \partial_r (|g|^{\frac 1 2}) a_j = (-\Delta_g + q) a_{j-1}
\ \Leftrightarrow \
\partial_r (|g|^{\frac 1 4} a_j) = \mp \I |g|^{\frac 1 4} (-\Delta_g + q) a_{j-1} / 2,
\]
which implies for $j=0,1,\cdots,J$,
\begin{equation} \label{eq:aj-AC22}
	a_j(r,\theta)
	= |g(r,\theta)|^{-1/4} \big[ b_j(\theta) \mp \frac {\I} 2 \int_0^r |g(s,\theta)|^{1/4} (-\Delta_g + q(s,\theta)) a_{j-1}(s,\theta) \dif s \big],
\end{equation}
where $b_j$ are any smooth functions.
Especially, due to $a_{-1} \equiv 0$ we have
\begin{equation} \label{eq:a0-AC22}
	a_0(r,\theta)
	= |g(r,\theta)|^{-1/4} b(\theta),
\end{equation}
where $b$ is a smooth function.
Readers may note that $a_0$ is independent of the potential.
After solving $a_j$, we can substitute \eqref{eq:trans-AC22} into \eqref{eq:lam1-AC22} to further reduce the original equation to
\begin{equation*} \label{eq:lam2-AC22}
	\mathcal{L}_{q,\lambda} R
	= e^{\I\lambda \phi} \lambda^{-J} (\Delta_g - q) a_J \ \ \mbox{in} \ \ M,
\end{equation*}
where $a_J$ is determined by \eqref{eq:aj-AC22}.
By Proposition \ref{prop:resol-AC22}, for $\lambda$ large there is $R \in H^2(M)$ solving the above equation such that $\nrm[L^2(M)]{R} \leq C \lambda^{-1} \nrm[L^2(M)]{e^{\I\lambda \phi} \lambda^{-J} (\Delta_g - q) a_J} $.
We summarize the construction above as follows.
For our purposes we choose $J=0$.

\begin{proposition} \label{prop:r-AC22}
	Let $(M,g)$ be a simple manifold and $ M\subset M^{\mathrm{int}}_1$, where $M_1$ is also simple. Let $ y\in \partial M_1$ and $ (r,\theta)$ be the polar normal coordinates in $M_1$ with center at $y$. Let also $\nrm[L^{\infty}(M)]{q} \leq B$. Then for $\lambda \geq \lambda_0(M,g,B)$ the equation $ \mathcal{L}_{q}u=0$ in $M$ has a solution of the form
	\begin{align}
		u=e^{\I \lambda r} a+R,
	\end{align}
	where $a$ solves the transport equation $\mathcal{T}_{g,r}a=0$ defined in \eqref{eq:La-AC22}, and $R$ satisfies
	\begin{equation*}
		\nrm[L^2(M)]{R}
		\leq C \lambda^{-1} \nrm[L^2(M)]{(\Delta_g-q)a},
	\end{equation*}
	for a constant $C = C(M,g)$ independent of $\lambda$.
	The solution $a$ of $\mathcal{T}_{g,r}a=0 $ in polar normal coordinates is given by $a = |g|^{-1/4} b(\theta) $, where $b(\theta)$ is any smooth function in $\theta$.
\end{proposition}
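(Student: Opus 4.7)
The plan is to execute the WKB construction of Subsection \ref{subsec:simp-AC22} with the specific choices $\phi = r$ and $J = 0$, then invoke Proposition \ref{prop:resol-AC22} to obtain the remainder $R$. Since the preceding exposition already carries out the essential computations, the proof amounts to verifying that each ingredient assembles as claimed.

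First I would fix $y \in \partial M_1$ and work in the polar normal coordinates $(r,\theta)$ on $M_1$ centered at $y$, in which $g|_{(r,\theta)} = \df r^2 + g_0(r,\theta)\,\df \theta^2$. Setting $\phi(x) = r$, the eikonal equation $|\df \phi|_g^2 = 1$ is satisfied since $\partial_r \phi = 1$ and $\partial_\theta \phi = 0$. Substituting the ansatz $u = e^{\I \lambda r} a + R$ into $\mathcal{L}_{q,\lambda} u = 0$ and using \eqref{eq:La-AC22} reduces the problem to
\[
\mathcal{L}_{q,\lambda} R = e^{\I\lambda r}\bigl[\lambda \mathcal{T}_{g,r} a + (\Delta_g - q) a\bigr] \quad \text{in } M.
\]

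Next, I would verify the claimed explicit form $a = |g|^{-1/4} b(\theta)$ by solving the transport equation. In polar normal coordinates, $\langle \df r, \df a\rangle_g = \partial_r a$ and $\Delta_g r = |g|^{-1/2}\partial_r |g|^{1/2}$, so $\mathcal{T}_{g,r} a = 0$ becomes
\[
2\partial_r a + |g|^{-1/2}\partial_r |g|^{1/2} \cdot a = 0, \qquad \text{equivalently} \qquad \partial_r\bigl(|g|^{1/4} a\bigr) = 0.
\]
Integrating in $r$ yields $a(r,\theta) = |g(r,\theta)|^{-1/4} b(\theta)$ for an arbitrary smooth function $b$ of $\theta$. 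With this amplitude, the equation for $R$ reduces to $\mathcal{L}_{q,\lambda} R = e^{\I\lambda r}(\Delta_g - q) a$ in $M$.

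Finally, since simple manifolds are nontrapping by definition and $\nrm[L^\infty(M)]{q} \le B$, Proposition \ref{prop:resol-AC22} applies: for every $\lambda \ge \lambda_0(M,g,B)$ it furnishes $R \in H^2(M)$ with
\[
\nrm[L^2(M)]{R} \le C \lambda^{-1} \nrm[L^2(M)]{e^{\I \lambda r}(\Delta_g - q) a} = C \lambda^{-1} \nrm[L^2(M)]{(\Delta_g - q) a},
\]
the last equality using $|e^{\I \lambda r}| = 1$. There is no substantial obstacle: Proposition \ref{prop:r-AC22} is a clean summary of the construction already detailed in the surrounding text, and the only point requiring explicit verification is the reduction of $\mathcal{T}_{g,r} a = 0$ to the divergence-form ODE $\partial_r(|g|^{1/4} a) = 0$ in polar normal coordinates.
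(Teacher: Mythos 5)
Your proposal is correct and follows essentially the same route as the paper: the construction in Subsection~\ref{subsec:simp-AC22} (eikonal equation solved by $\phi = r$ in polar normal coordinates, transport equation $\mathcal{T}_{g,r}a=0$ reduced to $\partial_r(|g|^{1/4}a)=0$, and the remainder furnished by Proposition~\ref{prop:resol-AC22} with $J=0$) is precisely what the paper carries out before stating the proposition as a summary. Your verification of $\Delta_g r = |g|^{-1/2}\partial_r|g|^{1/2}$ and the resulting divergence-form ODE is accurate, and invoking the resolvent estimate via the nontrapping property of simple manifolds closes the argument exactly as intended.
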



\subsection{Proof of Theorem \ref{thm:1-AC22}} \label{subsec:Thm1-AC22}

We now give the proof of Theorem \ref{thm:1-AC22}. 
Assume $\Lambda_{q+p}^\lambda = \Lambda_q^\lambda$ for some $\lambda \geq \lambda_0$, where $\lambda_0$ shall be determined later. Since $\Lambda_q^{\lambda} = \Lambda_{q-\lambda^2}^0$ and $\Lambda_{q+p}^{\lambda} = \lambda_{q+p-\lambda^2}^0$, a standard integration by parts (see e.g.\ \cite{DKSU}*{Lemma 6.1}) implies that 
\begin{equation} \label{eq:pu0-AC22}
    \int_M p \bar{u}_1 u_2 \dif V_g
    = ((\lambda_{q+p}^\lambda - \lambda_q^\lambda) u_1, u_2)_{L^2(\partial M)}
    = 0
\end{equation}
whenever $u_1$ and $u_2$ are any $H^1(M)$ solutions of \eqref{eq:1-AC22} corresponding to $q+p$ and $q$,  respectively. We also note that the condition $\Lambda_{q+p}^\lambda = \Lambda_q^\lambda$ together with a boundary determination result imply that 
\(
p|_{\partial M} = 0.
\)
This is proved for smooth potentials e.g.\ in \cite{DKSU} and for H\"older continuous potentials in \cite{GuillarmouTzou2011}*{Proposition A.1} (recall that $q, p \in H^s(M)$ where $s > \frac{n}{2}$, so $q, p \in C^{\alpha}(M)$ for some $\alpha > 0$ by Sobolev embedding).

Due to the conditions $\nrm[H^s]{q} \leq B$, $\nrm[H^s]{p} \leq B$ stated in Theorem \ref{thm:1-AC22} and Sobolev embedding, we have $\nrm[L^{\infty}]{q} \leq CB$, $\nrm[L^{\infty}]{q+p} \leq CB$ where $C = C(M,g,s)$. By Proposition \ref{prop:r-AC22}, for $\lambda \geq \lambda_0(M,g,s,B)$ we can choose solutions $u_1$, $u_2$ having the form 
\begin{equation*} 
	\left\{\begin{aligned}
		u_1(r,\theta)
		& = e^{\I \lambda r} |g(r,\theta)|^{-1/4} b(\theta) + r_1, \\
		u_2(r,\theta)
		& = e^{\I \lambda r} |g(r,\theta)|^{-1/4} + r_2,
	\end{aligned}\right.
\end{equation*}
where $(r,\theta)$ are polar normal coordinates in $M_1$ with center at some $y \in \partial M_1$, and $b(\theta)$ shall be chosen later.
In these coordinates, $\dif V_g = |g|^{1/2} \dif r \dif \theta$.
Proposition \ref{prop:r-AC22} also gives
\begin{equation} \label{eq:rj-AC22}
	\nrm[L^2(M)]{r_1}
	\leq C \lambda^{-1} \nrm[H^2(\partial_+ S_y M_1)]{b}, \qquad
	\nrm[L^2(M)]{r_2}
	\leq C \lambda^{-1}.
\end{equation}

Substituting $u_1$, $u_2$ into \eqref{eq:pu0-AC22}, we have
\begin{align}
	0
	& = \int_M p  u_1 \bar{u}_2 \dif V_g
	= \int_M p (e^{\I \lambda r}|g|^{-1/4} b(\theta) + r_1) (e^{-\I \lambda r} |g|^{-1/4} + \bar{r}_2) \dif V_g \nonumber \\
	& = \int_M p [|g|^{-1/2} b(\theta) + e^{-\I \lambda r} |g|^{-1/4}  r_1 + e^{\I \lambda r} |g|^{-1/4} b(\theta) \bar{r}_2 +  r_1 \bar{r}_2] \dif V_g. \label{eq:pu1-AC22}
\end{align}
Recall $\dif V_g = |g|^{1/2} \dif r \dif \theta$ and $\int_0^{\tau_{M_1}(y,\theta)} p(r,\theta) \dif r = I p(y,\theta)$ where $I$ is the geodesic ray transform on $M_1$. Here we assume that $p$ is extended by zero to $M_1$. Thus we also have
\begin{equation} \label{eq:pb-AC22}
	\int_M p(r,\theta) b(\theta) |g(r,\theta)|^{-1/2} \dif V_g
	= \int_{\partial_{+} S_{y}M_1} Ip(y,\theta) b(\theta) \dif \theta.
\end{equation}
From $p \in \mathcal{A}_s(B)$ where $s >  \frac{n}{2} $ and from the Sobolev embedding we can conclude that $\nrm[L^\infty(M)]{p} \leq C \nrm[H^s(M)]{p} \leq CB \nrm[L^2(M)]{p}$.
Therefore, from \eqref{eq:rj-AC22}, \eqref{eq:pu1-AC22} and \eqref{eq:pb-AC22} it follows that, with implied constants depending on $B$, 
\begin{align*}
	\big| \int_{\partial_{+} S_{y}M_1} Ip(y,\theta) b(\theta) \dif \theta \big|
	& \lesssim \nrm[L^2(M)]{p} \nrm[L^2(M)]{r_1} + \nrm[L^\infty(M)]{p} \nrm[L^2(M)]{b} \nrm[L^2(M)]{r_2} \\
	& \qquad + \nrm[L^\infty(M)]{p} \nrm[L^2(M)]{r_1} \nrm[L^2(M)]{r_2} \\
	& \lesssim \nrm[L^2(M)]{p} \left[\nrm[L^2(M)]{r_1} + (\nrm[L^2(\partial_+ S_y M_1)]{b} + \nrm[L^2(M)]{r_1}) \nrm[L^2(M)]{r_2}\right] \\
	& \lesssim \lambda^{-1} \nrm[L^2(M)]{p} \nrm[H^2(\partial_+ S_y M_1)]{b},
\end{align*}
where we used \eqref{eq:rj-AC22}.
This estimate further gives
\begin{align*}
	\big| \int_{\partial_{+}SM_1} Ip(y,\theta) b(\theta) \dif (\partial SM) \big|
	& \leq \int_{\partial M_1} \big| \int_{\partial_{+} S_{y}M_1} Ip(y,\theta) b(\theta) \dif \theta \big| \dif y \nonumber \\
	& \lesssim \lambda^{-1} \nrm[L^2(M)]{p} \int_{\partial M_1} \nrm[H^2(\partial_{+} S_y M_1)]{b} \dif y. 
\end{align*}
Note that the function $ b(\theta)$ depends on $y$.

Choosing $b(\theta) = \langle \nu_y,\theta\rangle I(I^*Ip)$, inserting this in the above inequality, and using the Santal\'o formula (Lemma \ref{lem:San1-AC22}) and boundedness of $I$ and $I^* I$ (Lemmas \ref{lemma_i_bounded} and \ref{lemma_simple_normal}),  we obtain
\begin{align} 
	\lVert I^*Ip \rVert^2 _{L^2(M_1)}
	&\lesssim \lambda^{-1} \nrm[L^2(M)]{p} \nrm[H^2(\partial_{+} SM_1)]{I(I^*Ip)} 
	\lesssim \lambda^{-1} \nrm[L^2(M)]{p} \nrm[H^2(M_1)]{I^*Ip}  \notag \\
	&\lesssim \lambda^{-1} \nrm[H^1(M)]{p}^2. \label{eq:IIpa-AC22}
\end{align}
Here we also used the condition $p|_{\partial M} = 0$, which allows us to consider $p$ as a function in $H^1(M_1)$ with support in $M$. Using the interpolation $\nrm[H^1(M_1)]{f}^2 \leq C \nrm[L^2(M_1)]{f} \nrm[H^2(M_1)]{f}$ \cite{Taylor_book}*{Proposition 3.1} between Sobolev spaces, we see $\nrm[H^1(M_1)]{I^*Ip}^2$ can be bounded by the product of $\nrm[L^2(M_1)]{I^*Ip}$ and $\nrm[H^2(M_1)]{I^*Ip}$.
The $L^2$ norm of $I^*Ip$ can be estimated from \eqref{eq:IIpa-AC22}, while the $H^2$ norm of $I^*Ip$ can be estimated by using the continuity of $I^* I$, thus
\begin{equation*}
	\nrm[H^1(M_1)]{I^*Ip}^2
	\leq C \lVert I^*Ip \rVert _{L^2(M_1)} \lVert I^*Ip \rVert _{H^2(M_1)}
	\lesssim \lambda^{-1/2} \nrm[H^1(M_1)]{p}^2.
\end{equation*}
Recall that $p \in \mathcal{A}_s(B)$ with $s > n/2 \geq 1$, so $\nrm[H^1(M_1)]{p} \lesssim \nrm[L^2(M_1)]{p}$.
This together with the inequality above gives
\begin{equation*}
	\nrm[H^1(M_1)]{I^*Ip}^2
	\lesssim \lambda^{-1/2} \nrm[L^2(M_1)]{p}^2.
\end{equation*}
Because $(M,g)$ is assumed to be a simple manifold, by Lemma \ref{lemma_simple_normal} we know that $I^* I$ is stably invertible, namely, $\nrm[L^2(M_1)]{p} \leq C \nrm[H^1(M_1)]{I^*Ip}$.
Combining this with the last displayed equation above, we arrive at
\begin{align*}
\nrm[L^2(M_1)]{p}^2
\leq C_B \lambda^{-1/2} \nrm[L^2(M_1)]{p}^2
\ \implies \
(1 - C_B^{1/2} \lambda^{-1/4}) \nrm[L^2(M_1)]{p}
\leq 0.
\end{align*}
By setting $\lambda_0(M,g,B) := 2 C_B^2$ and choosing $\lambda \geq \lambda_0(M,g,B)$, we can conclude from the above that $\nrm[L^2(M)]{p} \leq 0$, so $p = 0$ in $M$.
This completes the proof of Theorem \ref{thm:1-AC22}.

\section{Proof of Theorem \ref{thm:2-AC22}} \label{sec:Thm2-AC22}


In this section, we present the proof of Theorem \ref{thm:2-AC22}.
As in the proof of Theorem \ref{thm:1-AC22}, the assumption $\Lambda_{q+p}^{\lambda} = \Lambda_{q}^{\lambda}$ leads to the integral identity 
\begin{equation} \label{eq:pu12-AC22}
	\int_M p u_1 \overline{u}_2 \dif V_g = 0.
\end{equation}
 Here, $u_1$ and $u_2$ solve \eqref{eq:1-AC22} with potentials being $q+p$ and $q$, respectively. We will choose $u_1$ and $u_2$ to be Gaussian beam quasimodes concentrated near a geodesic $\gamma$ based on Theorem \ref{thm:gbu-AC22}.
According to Theorem \ref{thm:gbu-AC22}, $u_j~(j=1,2)$ can be represented as $u_j = v + r_j$, where $v$ is the leading term and $r_j$ are the corresponding remainder terms.
Here $v$ is the leading term of both $u_1$ and $u_2$.
Note that $u_1$ and $u_2$ have the same leading term because the leading term depends only on the metric and the geodesic. Since the leading term $v$ concentrate on the geodesic $ \gamma$, the term $ \int_{M} p\, |v|^2$  can be estimated using Theorem \ref{thm:gbuc-AC22}. This implies 
\begin{align} \label{estimate_of_Ip}
   \big| \int_{M}  p\, |v|^2 \dif V_g - Ip(\gamma) \big| \leq C \nrm[C^1(M)]{p} h^{1/2},
\end{align}
where $Ip$ stands for the geodesic ray transform of $p$.
Next substituting $u_j=v+r_j$ into \eqref{eq:pu12-AC22} for $j=1,2$, we obtain
\begin{equation} \label{eq:pu13-AC22}
    \int_M p |v|^2 \dif V_g
    = -\int_M p (\overline{v} r_1 + v \overline{r}_2 + r_1 \overline{r}_2) \dif V_g,
\end{equation}
where $r_1$ and $r_2$ are error terms that can be estimated using the following result.

\begin{lemma} \label{lem:rEst-AC22}
    Let $r_1$ and $r_2$ be given as above.
    There exists a constant $C$ uniformly with respect to $\gamma$ such that
    \(
    \norm{r_j}_{L^2(M)} \leq C \lambda^{-1}
    \)
    for $\lambda \geq \lambda_0(M,g,B)$, $j=1,2$.
\end{lemma}

\begin{proof}
    We only give the proof for $r_1$, and that of $r_2$ is similar.
    By Proposition \ref{prop:resol-AC22} we have
    \(
    \norm{r_1}_{L^2(M)}
    \leq C \lambda^{-1} \nrm[L^2(M)]{\mathcal L_{q,\lambda} v}.
    \)
    The quantity $\mathcal L_{q,\lambda} v$ can be expressed in terms of $h$ as $\lambda^2 (-h^2 \Delta_g - 1) v + q v$.
    From Theorem \ref{thm:gbu-AC22} we can bound the $L^2$-norm of both $(-h^2 \Delta_g - 1) v$ and $v$, thus
    \begin{align*}
        \norm{r_1}_{L^2(M)}
        & \leq C_1 \lambda^{-1} (\nrm[L^2(M)]{\lambda^2 (-h^2 \Delta_g - 1) v} + \nrm[L^2(M)]{q v}) \\
        & \leq C_1 \lambda^{-1} (C_2 \lambda^2 h^K + C_2 \nrm[L^\infty(M)]{q})
        \leq C_3 \lambda^{-1}.
    \end{align*}
    The constant $C_1$ comes from the resolvent estimate given in Proposition \ref{prop:resol-AC22}, so it does not depend on the choice of the geodesic $\gamma$.
    The uniformity of $C_2$ with respect to $\gamma$ is guaranteed by Theorem \ref{thm:gbuc-AC22}, respectively.
    Therefore, $C_3$ is uniform with respect to the choice of the geodesic $\gamma$.
    The proof is done.
\end{proof}

The combination of \eqref{estimate_of_Ip}, \eqref{eq:pu13-AC22} and Lemma \ref{lem:rEst-AC22} entails
\begin{align*}
    |Ip(\gamma)|
    & \le C   h^{1/2} \nrm[C^1(M)]{p} + \big |\int_M p (\overline{v} r_1 + v \overline{r}_2 + r_1 \overline{r}_2) \dif V_g \big| \\
    &\le  C h^{1/2} \nrm[C^1(M)]{p} + \left[\nrm[L^2(M)]{v} \mathcal O(\lambda^{-1}) + \mathcal O(\lambda^{-1}) \right]\nrm[L^\infty(M)]{p}.
\end{align*}
Since $p \in \mathcal A_s(B)$ and $s > 1 + \frac n 2$, we have $\nrm[L^\infty(M)]{p} \leq \nrm[C^1(M)]{p} \lesssim \nrm[H^s(M)]{p} \leq B \nrm[L^2(M)]{p}$.
By Theorem \ref{thm:gbu-AC22} we also have that $ \nrm[L^2(M)]{v} \leq C$. The combination of these with the above inequality imply
\begin{equation*}
	|Ip(\gamma)|
	\leq C h^{1/2}\,\nrm[L^2(M)]{p}.
\end{equation*}
This further gives (writing $\lambda=h^{-1}$)
\begin{equation} \label{eq:Ipla-AC22}
	\nrm[L^2(\partial_+ S M_1)]{Ip}
	\leq B \mathcal O(\lambda^{-1/2}) \nrm[L^2(M)]{p} .
\end{equation}
Here we considered $M$ to be embedded into a slightly larger manifold $M_1$ and extended $p$ by zero to $M_1$ as in Section \ref{subsec:Thm1-AC22}.
Then by using:
(i) the stable invertibility of $I$ with respect to the $L^2(M_1)$ and $H^1(\partial_+ S M_1)$ norms (cf.~\eqref{eq:stable-AC22}),
(ii) the interpolation $\nrm[H^1(\partial_+ S M_1)]{\varphi}^2 \leq \nrm[L^2(\partial_+ S M_1)]{\varphi} \nrm[H^2(\partial_+ S M_1)]{\varphi}$,
(iii) the estimate \eqref{eq:Ipla-AC22},
(iv) the continuity of $I \colon H^2(M_1) \to H^2(\partial_+ S M_1)$ (cf.\ \eqref{eq:stable2-AC22}),
(v) and the assumption $p \in \mathcal A_s(B)$
sequentially, we can make the following derivation,
\begin{align*}
	\nrm[L^2(M_1)]{p}^2
	& \leq \nrm[H^1(\partial_+ S M_1)]{Ip}^2
	\leq \nrm[L^2(\partial_+ S M_1)]{Ip} \nrm[H^2(\partial_+ S M_1)]{Ip} \\
	& \leq \mathcal O(\lambda^{-1/2}) \nrm[L^2(M_1)]{p} \nrm[H^2(M_1)]{p}
	\leq \mathcal O(\lambda^{-1/2}) \nrm[L^2(M_1)]{p}^2, \quad \forall \lambda \geq \lambda_{M,g,B}.
\end{align*}
The implicit constant also depends on $B$. Therefore, there exists a constant $C$ such that
\begin{equation} \label{eq:Ipl-AC22}
	(1 - C \lambda^{-1/4}) \nrm[L^2(M_1)]{p}
	\leq 0, \quad \forall \lambda \geq \lambda_{M,g,B}.
\end{equation}
Hence, we conclude $p = 0$ in $M$. This concludes the proof of Theorem \ref{thm:2-AC22}.

\begin{remark} \label{rem:sta1-AC22}
    When $\Lambda_{q+p}^\lambda \neq \Lambda_{q}^\lambda$, by \eqref{eq:pu0-AC22}
    and the arguments in Section \ref{sec:Thm2-AC22} we obtain
    \begin{equation*}
        Ip(\gamma)
        = ((\Lambda_{q+p}^\lambda - \Lambda_{q}^\lambda) u_1, u_2)_{L^2(\partial M)} + \nrm[L^2(M)]{p} \mathcal O(\lambda^{-a})
    \end{equation*}
    for any $a \in (1/3,1/2)$, where we used the assumption $p \in \mathcal A_s(B)$ for $s > n/2 + 1$.
    We denote $\epsilon := \nrm[H^{1/2}(\partial M) \to H^{-1/2}(\partial M)]{\Lambda_{q+p}^\lambda - \Lambda_{q}^\lambda}$, and $u_j = v + r_j~(j=1,2)$ as in Section \ref{sec:Thm2-AC22}, then
    \begin{align*}
        \nrm[L^2(M_1)]{p}
        & \lesssim \nrm[H^1(\partial_+ S M_1)]{Ip}
        \lesssim \epsilon \nrm[H^{1/2}(\partial M)]{v + r_1} \nrm[H^{1/2}(\partial M)]{v + r_2} + \nrm[L^2(M)]{p} \lambda^{-a} \nonumber \\
        & \lesssim \epsilon (\nrm[H^1(M)]{v}^2 + \nrm[H^1(M)]{v} \nrm[H^1(M)]{r} + \nrm[H^1(M)]{r}^2) + \nrm[L^2(M)]{p} \lambda^{-a}.
    \end{align*}
    In the derivation above, because both $r_1$ and $r_2$ follow the same estimate with respect to $\lambda$ and $\delta$, we don't distinguish them by simply represent both of them as $r$.
    Absorbing the $\lambda^{-a} \nrm[L^2(M)]{p}$ term by the left-hand-side, we finally obtain
    \begin{align}
        (1 - \lambda^{-a}) \nrm[L^2(M_1)]{p}
        & \leq C \nrm[H^{1/2}(\partial M) \to H^{-1/2}(\partial M)]{\Lambda_{q+p}^\lambda - \Lambda_{q}^\lambda} \nonumber \\
        & \quad \times (\nrm[H^1(M)]{v}^2 + \nrm[H^1(M)]{v} \nrm[H^1(M)]{r} + \nrm[H^1(M)]{r}^2), \label{eq:sta-AC22}
    \end{align}
    The $L^2$ norm of $v$ and $r$ can be  investigated using the estimates given in  Theorem \ref{thm:gbu-AC22}.
    To obtain their $H^1$ norm, we need to analyze their gradients, which shall give certain growth of order $\lambda^{b_1} \delta^{b_2}$ for certain $b_1$, $b_2 \in \R$.
    We defer this to future works.
\end{remark}

\begin{remark} \label{rem:sta2-AC22}
    Our method can also be utilized to obtain stability estimates in certain Sobolev spaces. However, in this case one can obtain H\"older-type stability estimates by examining the difference between the DN maps at large frequency. This is consistent with the phenomenon of improved stability for high frequency Schr\"odinger operators on $\Rn$, which has already been investigated  in the literature; see for instance \cite{Isakov_increasing_stability_2014} and  the references therein.
\end{remark}

\section{Gaussian beams with uniform constants} \label{sec:UC-AC22}

In this section we give an invariant construction of Gaussian beam quasimodes with uniform bounds for the underlying constants. Let $(M,g)$ be a compact manifold with smooth boundary. For any $(x,v) \in \partial_+ SM$ let $\gamma_{x,v}: [0,\tau(x,v)] \to M$ be the maximally extended unit speed geodesic starting at $x$ in direction $v$. We allow the manifold to have trapped geodesics (i.e.\ $\tau(x,v)$ may be $+\infty$ for some $(x,v)$), but below we will only work with $(x,v) \in \mathcal{G}_T$ where 
\[
\mathcal{G}_T = \{ (x,v) \in \partial_+ SM \,:\, \tau(x,v) \leq T \}.
\]
The following result states the existence of Gaussian beam quasimodes concentrating near $\gamma_{x,v}$ with uniform bounds over $(x,v) \in \mathcal{G}_T$. Recall that $I$ denotes the geodesic X-ray transform on $(M,g)$.

\begin{theorem} \label{thm:gbuc-AC22}
    Let $(M,g)$ be a compact oriented manifold with smooth boundary. Fix $T > 0$ and $k, K \geq 0$. There is a constant $C = C(M,g,T,k,K) > 0$ such that for any $(x,v) \in \mathcal{G}_T$ and $h \in (0,1)$, there is $u = u_{x,v,h} \in C^{\infty}(M)$ satisfying 
    \begin{gather} \label{eq:conc-AC22}
    \left\lvert \int_M |u|^2 \varphi \dif V_g - I\varphi(x,v) \right\rvert \leq C \norm{\varphi}_{C^1(M)} h^{1/2}, \\
    \norm{(-h^2 \Delta_g - 1) u}_{H^k(M)} \leq C h^K,\notag
    \end{gather}
    uniformly over $0 < h < 1$ and $\varphi \in C^1(M)$.    
\end{theorem}

Theorem \ref{thm:gbuc-AC22} is sufficient for proving Theorem \ref{thm:2-AC22}. For later purposes, we also state a result that describes the form of $u_{x,v,h}$ more precisely and involves normalization in $L^p$. Below, for a tensor $A$ at $x$ and a subspace $F$ of $T_x M$ we write $A|_{F}$ for the multilinear form that only acts on vectors in $F$.

\begin{theorem} \label{thm:gbu-AC22}
Let $(M,g)$ be a compact oriented manifold with smooth boundary. Fix constants $T > 0$, $p \in [1,\infty)$, $k \geq 0$, and $K \geq 0$. There is a constant $C = C(M,g,T,p,k,K) > 0$ such that for any $(x,v) \in \mathcal{G}_T$ and $h \in (0,1)$, there is $u = u_{x,v,h} \in C^{\infty}(M)$ associated with $\gamma = \gamma_{x,v}$ and satisfying 
\begin{gather*}
\norm{u}_{L^p(M)} \leq C, \\
\norm{(-h^2 \Delta_g - 1)u}_{W^{k,p}(M)} \leq C h^K, \\
\mathrm{supp}(u) \subset \{ y \in M \,:\, \mathrm{dist}_g \big( y, \gamma([0,\tau(x,v)]) \big) \leq C^{-1} \},
\end{gather*}
uniformly over all $0 < h < 1$.

There is also a symmetric complex $(1,1)$-tensor $H(t) = H_{x,v}(t)$ on $T_{\gamma(t)} M$, depending smoothly on $t \in [0,\tau(x,v)]$ and satisfying 
\[
\mathrm{Im}(H(t)^{\flat}) \geq 0, \qquad \mathrm{Im}(H(t)^{\flat})|_{\dot{\gamma}(t)^{\perp}} \geq C^{-1} g|_{\dot{\gamma}(t)^{\perp}},
\]
such that $u = u_{x,v,h}$ has the following form. If $x_0 \in \gamma([0,\tau(x,v)])$ and if $t_1 < \ldots < t_{N_p}$ are the times in $[0,\tau(x,v)]$ when $\gamma(t_l) = x_0$, then in a small neighborhood $U$ of $x_0$ we have 
\[
u|_U = u^{(1)} + \ldots + u^{(N_p)}
\]
where each $u^{(l)}$ satisfies 
\[
u^{(l)}|_U = h^{-\frac{n-1}{2p}} e^{\I \Phi^{(l)}/h} (a_0^{(l)} + h a_1^{(l)} + \ldots + h^N a_N^{(l)}) \rho.
\]
Here $N = N(M,g,T,p,k,K)$, and $\rho$ is a smooth cutoff function supported near $\gamma|_{[t_l-\varepsilon,t_l+\varepsilon]}$. The phase $\Phi = \Phi^{(l)}$ is independent of $h$ and satisfies for $t$ near $t_l$ 
\[
\Phi(\gamma(t)) = t, \quad \nabla \Phi(\gamma(t)) = \dot{\gamma}(t), \quad \nabla^2 \Phi(\gamma(t)) = H(t)^{\flat}, \qquad \norm{\Phi}_{C^k(\overline{U})} \leq C.
\]
The amplitudes $a_j^{(l)}$ are independent of $h$, and for $t$ near $t_l$ one has 
\[
a_0^{(l)}(\gamma(t)) = \exp \left[ -\frac{1}{2} \int_0^t \mathrm{tr}_g(H(s)) \dif s \right], \qquad \norm{a_j^{(l)}}_{C^k(\overline{U})} \leq C.
\]
If $p=2$, then $c_n u$ satisfies the conditions in Theorem \ref{thm:gbuc-AC22} with $\displaystyle c_n = (\int_{\R^{n-1}} e^{-\abs{y}^2} \dif y)^{-1/2}$.
\end{theorem}

\begin{remark}
We note that if $(M,g)$ is compact and nontrapping, i.e.\ $\tau(x,v) < \infty$ for all $(x,v) \in \partial_+ SM$, then $\mathcal{G}_T = \partial_+ SM$ for sufficiently large $T$. If $\partial M$ is strictly convex, this follows from the continuity of $\tau$. In general one can argue as follows: suppose $\tau(x_j,v_j) \to \infty$ for some sequence $(x_j, v_j) \in \partial_+ SM$. After choosing a subsequence, we have $(x_j, v_j) \to (x,v) \in \overline{\partial_+ SM}$. Since $\tau$ is upper semicontinuous, 
\[
\lim \tau(x_j, v_j) \leq \tau(x,v).
\]
This is a contradiction, since $\tau(x,v) < +\infty$ by the nontrapping condition.
\end{remark}



We will prove Theorems \ref{thm:gbuc-AC22} and \ref{thm:gbu-AC22} in two parts: first in the case of non-self-intersecting geodesics, and then in the self-intersecting case.

\subsection{No self-intersection case} \label{subsec:UCno-AC22}

Let $(M,g)$ be a compact oriented manifold with smooth boundary. Fix $T > 0$ and define
\[
\tilde{\mathcal{G}}_T := \{ (x,v) \in \mathcal{G}_T \,:\, \text{$\gamma_{x,v}$ does not self-intersect} \}.
\]
We will prove Theorem \ref{thm:gbu-AC22} for all $(x,v) \in \tilde{\mathcal{G}}_T$, and in Section \ref{subsec:UCse-AC22} we reduce the general case $(x,v) \in \mathcal{G}_T$ to this case.

Let $(x,v) \in \tilde{\mathcal{G}}_T$ and let $\gamma = \gamma_{x,v}$. We look for $u = u_{x,v,h}$ in the form 
\begin{equation} \label{u_nsi_form}
u = h^{-\frac{n-1}{2p}} e^{i \Phi/h} (a_0 + h a_1 + \ldots + h^N a_N) \rho
\end{equation}
where $\rho$ is a suitable cutoff function. The functions $\Phi$ and $a_j$ will be constructed in an invariant fashion, but in order to do this we need some preparations.


We assume that $(M,g)$ is embedded in a closed manifold $(S,g)$ of the same dimension.
We will also consider a cutoff function $\chi \in C^{\infty}_c(\R)$ with $0 \leq \chi \leq 1$, $\chi(x) = 1$ for $|x| \leq 1/2$, and $\chi(x) = 0$ for $|x| \geq 1$. We consider $(S,g)$ and $\chi$ to be fixed once and for all. The constructions and constants below will depend on the choice of $(S,g)$ and $\chi$ but we will not write out this dependence.

Define
\begin{equation} \label{eq:Udel-AC22}
    U(\gamma_{x,v}, \delta) = \{ (t, y) \,:\, t \in (-\delta,\tau(x,v)+\delta), \ y \perp \dot{\gamma}_{x,v}(t), \ |y| < \delta \},
\end{equation}
and let $\delta_{x,v}$ be the supremum of $\delta \geq 0$ such that the map $F_{x,v}: U(\gamma_{x,v},\delta) \to S, \ (t,y) \mapsto \exp_{\gamma_{x,v}(t)}(y)$ is a diffeomorphism onto its image. By the inverse function theorem, since $dF_{x,v}|_{(t,0)}$ corresponds to the identity map, there is a positive lower bound for $\delta_{x,v}$ that depends on the $C^2$ norm of $F_{x,v}$ on $U(\gamma_{x,v},1)$ (see e.g.\ \cite{PSU_book}*{Lemma 11.2.6} for a similar argument). Hence, the constant
\[
\mathrm{inj}_F(M,g)
:= \inf_{(x,v) \in \mathcal{G}_T} \delta_{x,v}
\]
is positive due to the compactness of $M$.
Below we fix $\delta = \mathrm{inj}_F(M,g)/2$. 

The phase function $\Phi$ is specified in the following lemma.

\begin{lemma} \label{lem:a1-AC22}
Let $N \geq 0$ be an integer. For any $\gamma = \gamma_{x,v}$ with $(x,v) \in \mathcal{G}_T$ there is a unique function $\Phi = \Phi_{x,v,N} \in C^{\infty}(M; \mathbb C)$ satisfying the following conditions for any $t \in [-\delta,\tau(x,v)+\delta]$:
\begin{enumerate}
    \item[(a)] 
    $\nabla^j(\langle \df \Phi, \df \Phi \rangle-1)(\gamma(t)) = 0 \text{ for $0 \leq j \leq N+2$}$,
    
    \item[(b)]
    $\Phi(\gamma(t)) = t, \ \df \Phi(\gamma(t)) = (\dot \gamma(t))^\sharp$,
    
    \item[(c)]
     $\nabla^2 \Phi(\gamma(0))|_{\dot{\gamma}(0)^{\perp}} = \I g|_{\dot{\gamma}(0)^{\perp}}, \ \nabla^j \Phi(\gamma(0))|_{\dot{\gamma}(0)^{\perp}} = 0 \text{ for $3 \leq j \leq N$}$,
    
    \item[(d)]
    $\Phi(\exp_{\gamma(t)}(y)) = \chi(|y|/\delta) \sum_{j=0}^N \frac{\nabla^j \Phi|_{\gamma(t)}(y, \ldots, y)}{j!}$ in $U(\gamma, \delta)$,
    
    \item[(e)]
    $\Phi = 0$ outside $U(\gamma,\delta)$.
\end{enumerate}
There are constants $C, c > 0$ only depending on $(M,g)$, $T$ and $N$ such that
\begin{equation} \label{eq:aq2-AC22}
    \norm{\Phi}_{C^N(M)} \leq C, \quad \mathrm{Im}(\nabla^2 \Phi)|_{\dot{\gamma}(t)^{\perp}} \geq c g|_{\dot{\gamma}(t)^{\perp}}
\end{equation}
whenever $t \in [-\delta, \tau(x,v) + \delta]$.
\end{lemma}


Define the transport operator $L$ by 
\[
Lv := \frac 1 \I (2 \langle \df \Phi, \df v \rangle + (\Delta_g \Phi)v).
\]
The amplitudes $a_r$ are given as follows.

\begin{lemma} \label{lem:a2-AC22}
Let $N \geq 0$ be an integer. For any $(x,v) \in \mathcal{G}_T$ there are unique functions $a_r = a_{r,x,v,N} \in C^{\infty}(M; \mathbb C)$ with $0 \leq r \leq N$ satisfying the following conditions for any $t \in [-\delta,\tau(x,v)+\delta]$:
\begin{enumerate}
    \item[(a)] 
    $\nabla^j(La_0)(\gamma(t)) = 0 \text{ for $0 \leq j \leq N$}$,
    
    \item[(b)]
    $a_0(\gamma(0)) = 1, \ \nabla^j a_0(\gamma(0))|_{\dot{\gamma}(0)^{\perp}} = 0 \text{ for $1 \leq j \leq N$}$,
    
    \item[(c)] 
    $\nabla^j(La_r - \Delta_g a_{r-1})(\gamma(t)) = 0 \text{ for $0 \leq j \leq N$ and $1 \leq r \leq N$}$,
    
    \item[(d)]
    $a_r(\gamma(0)) = 0$, $\nabla^j a_r(\gamma(0))|_{\dot{\gamma}(0)^{\perp}} = 0 \text{ for $1 \leq r \leq N$ and $1 \leq j \leq N$}$,
    
    \item[(e)]
    $a_r(\exp_{\gamma(t)}(y)) = \chi(|y|/\delta) \sum_{j=0}^N \frac{\nabla^j a_r|_{\gamma(t)}(y, \ldots, y)}{j!}$ in $U(\gamma, \delta)$,
    
    \item[(f)]
    $a_r = 0$ outside $U(\gamma,\delta)$.
\end{enumerate}
There is a  constant $C> 0$ only depending on $(M,g)$, $T$ and $N$ such that 
\[
\norm{a_r}_{C^N(M)} \leq C.
\]
Moreover, if $H(t) = \nabla^2 \Phi(\gamma(t))^{\sharp}$, one has 
\begin{align}\label{principle_term}
    a_0(\gamma(t)) = \exp \left[ -\frac{1}{2} \int_0^t \mathrm{tr}_g(H(s)) \dif s \right].
\end{align}
\end{lemma}

\begin{proof}[Proof of Theorem \ref{thm:gbu-AC22}]
    We now prove Theorem \ref{thm:gbu-AC22} for $(x,v) \in \tilde{\mathcal{G}}_T$. By Lemmas \ref{lem:a1-AC22} and \ref{lem:a2-AC22}, the $C^N(M)$ norms of $\Phi$ and $a$ are uniformly bounded over $(x,v) \in \tilde{\mathcal{G}}_T$.
    Moreover, we have the estimate $\mathrm{Im} \big( \Phi(\exp_{\gamma(t)}(y)) \big) \geq c |y|^2 - C |y|^3$ where $c, C > 0$ are uniform over $(x,v)$. We now choose $\delta_1 = \delta_1(C,c) < \delta$ so that 
\begin{align}\label{Gaussian_estimate}
   \mathrm{Im} \big( \Phi(\exp_{\gamma(t)}(y)) \big) \geq c |y|^2/2, \quad |y| < \delta_1. 
\end{align}
The function $\rho$ in Theorem \ref{thm:gbu-AC22} is chosen as $\rho(t,y) = \chi(|y|/\delta_1)$. Using the above facts, all constants below will be uniform over $(x,v) \in \tilde{\mathcal{G}}_T$.

We now compute the $L^p(M)$ norm of $u$. Due to the presence of the cutoff function $\rho$, it is enough to calculate the $L^p$ norm $u$ in $ U(\gamma, \delta)$. This  along with \eqref{Gaussian_estimate} entails
\begin{equation*}
    \nrm[L^p(M)]{u}^p
    \lesssim \int_{-\delta}^{\tau+\delta} \int_{ |y|< \delta_1} \frac 1 {h^{(n-1)/2}} |e^{-c|y|^2/2h}|^p \,|g|^{1/2}\dif t \dif y
    \lesssim (2\delta +\tau) \int_{\R^{n-1}} e^{-c p|z|^2/2} \dif z.
\end{equation*}
This shows that $\nrm[L^p(M)]{u} \lesssim 1$ uniformly over $(x,v) \in \Tilde{\mathcal{G}}_T$.

Let us then denote $f = (-h^2\Delta_g-1) (e^{\I \Phi/h} a)$ where $a = (a_0 + h a_1 + \ldots + h^N a_N) \chi(|y|/\delta_1)$.
A direct computation shows that
\begin{equation*}
    f = \frac {e^{\I \Phi/h}} {h^{(n-1)/(2p)}} \big[ (|\df \Phi|_{g}^2-1) a \chi(|y|/\delta_1) + hf_1 + \ldots + h^{N-1} f_{N-1}  - h^{N} \Delta_g(  \chi(y/\delta_1) a_N) \big],
\end{equation*}
where $f_j$ are smooth functions vanishing of order $N$ on $\gamma$ for $1\le j\le N-1$, due to the properties of $a_k$ in Lemma  \ref{lem:a2-AC22}. Also note that each $f_j$ contains two terms: one involves the operator $L$ acting on $a_k$, and another term is involving derivatives of the cutoff function $\chi(\abs{y}/\delta_1)$. The term involving derivatives of $\chi(\abs{y}/\delta_1)$ is $\mathcal{O}(h^K)$ for all $K$, due to the Gaussian nature of $e^{\I \Phi/h}$. Thus we ignore this term when we compute $\nrm[L^p(M)]{f}$. 
Observe that 
\begin{equation*}
    |e^{\I \Phi/h}|
    \leq e^{-c|y|^2/2h} \ \ \text{in} \ \ \mathrm{supp}(f).
\end{equation*} 
This  implies 
\begin{align} \label{estimates_of_f}
    \abs{f} \lesssim \frac{1}{h^{(n-1)/(2p)}} e^{-c|y|^2/2h} (|y|^{N+1}+ h^N).
\end{align}

It is enough to estimate $\nrm[L^p(M)]{f}$ in the neighbourhood $\{(t,y) : -\delta < t < \tau + \delta, |y|< \delta_1\}$ where $f$ is supported. Here $\tau = \tau(x,v)$. Then from \eqref{estimates_of_f} we obtain
\begin{align*}
    \nrm[L^p(M)]{f}^p
    & \lesssim \frac{1}{h^{(n-1)/2}} \int_{-\delta}^{\tau+\delta} \int_{ |y|< \delta_1} |f|^p \dif y \dif t
    \lesssim \frac{(\tau+2\delta) }{h^{(n-1)/2}} \int_{ |y|< \delta} \abs{e^{-c|y|^2/2h} (|y|^{N+1} + h^N)}^p \dif y \\
    & \lesssim   \frac{(\tau+2\delta) }{h^{(n-1)/2}}\int_{\R^{n-1}} \abs{e^{-c|z|^2/2} \,  (|z|^{N+1} h^{(N+1)/2} + h^N) }^p \, h^{\frac{n-1}{2}} \dif z \\
    & \lesssim (\tau+2\delta)  \big[ h^{\frac {(N+1)p} 2} \int_{\R^{n-1}} e^{-cp|z|^2/2} |z|^{(N+1)p} \dif z + h^{Np} \int_{\R^{n-1}} e^{-cp|z|^2/2} \dif z \big] \\
    & \lesssim (\tau+2\delta) h^{\frac {(N+1)p} 2}.
\end{align*}
Since $\tau(x,v) \leq T$ uniformly over $(x,v) \in \Tilde{\mathcal{G}}_T$ and since $\delta$ is fixed, we conclude
\begin{align*}
  \nrm[L^p(M)]{f} \lesssim h^{\frac{N+1}{2}}.
\end{align*}
In order to estimate  the $L^p(M)$ norm of higher order derivatives of $f$, we apply $\nabla_g$ on $f$ and observe that 
\begin{align*}
   h^{(n-1)/(2p)} \nabla_g f
   = \ & \frac{ \I \nabla_g \Phi}{h} e^{\I \Phi/h} \big[ (|\df \Phi|_{g}^2-1) a \chi(|y|/\delta_1) + \sum_{j=1}^{N-1} h^j f_j - h^{N} \Delta_g(  \chi(y/\delta_1) a_N) \big] \\ & + e^{\I \Phi/h}  \nabla_g\big[ (|\df \Phi|_{g}^2-1) a \chi(|y|/\delta_1) +\sum_{j=1}^{N-1} h^j f_j - h^{N} \Delta_g(  \chi(y/\delta_1) a_N) \big].
\end{align*} 
Next, utilizing similar arguments as above, we obtain
\begin{align*}
    \abs{\nabla_g f}\lesssim \frac 1 {h^{(n-1)/(2p)}} e^{-c|y|^2/2h} \big[ \frac 1 h (|y|^{N+1} + h^N) + (|y|^N + h^N) \big].
\end{align*}
This further entails
\begin{align*}
    \nrm[L^p(M)]{\nabla_g f}^p
    & \lesssim \frac{(\tau+2\delta) }{h^{(n-1)/2}} \int_{ |y|< \delta} \abs{e^{-c|y|^2/2h} \big[ \frac 1 h (|y|^{N+1} + h^N) + (|y|^N + h^N) \big]}^p \dif y \\
    & \lesssim   \frac{(\tau+2\delta) }{h^{(n-1)/2}}\int_{\R^{n-1}} \abs{e^{-c|z|^2/2} \, \big[ \frac 1 h (h^{\frac {N+1} 2} |z|^{N+1} + h^N) + (h^{\frac N 2} |z|^N + h^N) \big]}^p \, h^{\frac{n-1}{2}} \dif z \\
    & \lesssim (\tau+2\delta) \int_{\R^{n-1}} e^{-cp|z|^2/2} \big[ h^{\frac {(N-1)p} 2} |z|^{(N+1)p} + h^{\frac {Np} 2} |z|^{Np} \dif z + h^{(N-1)p} \big] \dif z \\
    & \lesssim (\tau+2\delta) (h^{\frac {(N-1)p} 2} + h^{\frac {Np} 2} + h^{(N-1)p})
    \lesssim (\tau+2\delta) h^{\frac {(N-1)p} 2},
\end{align*}
namely,
\begin{align*}
    \nrm[L^p(M)]{\nabla_g f}
    \lesssim (\tau+2\delta)^{1/p}\, h^{\frac {(N-1)} 2}.
\end{align*}
Similarly, one can obtain the following bound $\nrm[L^p(M)]{\nabla_g^k f}^p \lesssim (\tau+2\delta) h^{\frac {(N+1)p} 2} h^{-k}$ for the higher order derivatives of $f$. After choosing $N = N(K,k,p)$ in a suitable way, this gives the required bound for the $W^{k,p}(M)$ norm of $ (-h^2\Delta_g-1) u$.

The condition for $\mathrm{supp}(u)$ follows from the presence of the cutoff function $\rho$. Writing $H(t) = \nabla^2 \Phi(\gamma(t))^{\sharp}$, the conditions for the phase function and amplitudes follow from Lemmas \ref{lem:a1-AC22} and \ref{lem:a2-AC22}. The proof is done.
\end{proof}

\begin{proof}[Proof of Theorem \ref{thm:gbuc-AC22}]
    We will prove the theorem under the assumption that $(x,v) \in \tilde{\mathcal{G}}_T$. The case of self-intersections will be handled below in Section \ref{subsec:UCse-AC22}. If we denote the function in Theorem \ref{thm:gbu-AC22} by $w$, we take $u = c_n w$ where $c_n$ is the constant in Theorem \ref{thm:gbu-AC22}. It is then enough to prove the estimate \eqref{eq:conc-AC22} for $u$.
    
    By Theorem \ref{thm:gbu-AC22} we know that $u$ is of the form
    \[
    u = c_n h^{-\frac {n-1} 4} e^{\I \Phi/h} (a_0 + h a_1 + \ldots + h^N a_N) \rho.
    \]
    We will work in Fermi coordinates $(t,y)$ in $U$, where $U$ is the set 
    \begin{equation}
        U = \{ (t, y) \,:\, t \in (-\delta,\tau(x,v)+\delta), \ y \perp \dot{\gamma}_{x,v}(t), \ |y| < \delta \}.
    \end{equation}
    Since $\rho$ is supported in $U$, we can represent $u$ in the coordinates $(t,y)$ by
    \[
    u(t,y) = c_n h^{-\frac {n-1} 4} e^{\I \Phi(t,y)/h} (a_0(t,y) + h a_1(t,y) + \ldots + h^N a_N(t,y)) \rho(t,y).
    \]
    Also, $u \equiv 0$ outside $U$. We denote $ v = c_n h^{-\frac {n-1} 4} e^{\I \Phi(t,y)/h} a_0(t,y) \rho
(t,y)$ and write \[ u=v+hw, \quad \mbox{where} \quad w= c_n h^{-\frac {n-1} 4} e^{\I \Phi(t,y)/h} \sum_{k=1}^N h^{k-1}a_k(t,y). \] Then $ |v|^2= c_n^2  h^{-\frac {n-1} 2} e^{- 2\mathrm{Im}(\Phi(t,y))/h} |a_0|^2(t,y) \rho^2$.  In the coordinates $(t,y)$  we have the following expression for $ \Phi$:
\[\Phi(t,y)= t +\frac{1}{2} H(t)y\cdot y+ \mathcal{O} (|y|^3)  \implies \mathrm{Im}(\Phi) = \mathrm{Im}(\frac{1}{2} H(t)y\cdot y+ \mathcal{O} (|y|^3) ).\]
Next, we consider
\begin{align}
    \int_M |u|^2 \varphi \dif{V_g}
    & = \int_0^{\tau} \int_{\R^{n-1}}\varphi(t,y) |u|^2 |g|^{1/2} \df t\, \df y \nonumber \\
    &= \int_0^{\tau} \int_{\R^{n-1}} \varphi(t,y) \,|v|^2 |g|^{1/2} \dif t \dif y \nonumber \\
    & \quad + \int_0^{\tau} \int_{\R^{n-1}} \varphi(t,y) \,[|hw|^2+ 2 h\mathrm{Re}(v\bar{w})] |g|^{1/2} \dif t \dif y \nonumber \\
    & = J_1 + J_2. \label{eq_acc_6.8}
\end{align}
We analyze $J_1$ and $J_2$ separately. We start with $J_1$, which may be written as 
\begin{align*}
    & J_1
    = \int_0^{\tau} \int_{\R^{n-1}} \varphi(t,y) \,|v|^2 |g|^{1/2} \df t\, \df y\\&= c_n^2 h^{-\frac{n-1}{2}} \int_0^{\tau} \int_{\R^{n-1}} \varphi(t,y) \,  e^{ -2 \mathrm{Im}( \Phi (t,y))/h}\, |a_0|^2\, \rho^2(t,y)\,|g|^{1/2} \df t\, \df y \\
    & = c_n^2 h^{-\frac{n-1}{2}} \int_0^{\tau} \int_{\R^{n-1}} \varphi(t,y) \,  e^{ -\mathrm{Im}(H(t))y\cdot y/h} \, e^{ \mathcal{O}(|y|^3)/h}\, |a_0|^2\, \rho^2(t,y)\,|g|^{1/2} \df t\, \df y\\
    & = c_n^2 
    \int_0^{\tau} \int_{\R^{n-1}} e^{ -\mathrm{Im}(H(t))y\cdot y}\, \varphi(t,\sqrt{h} y) \,  e^{\sqrt{h} \mathcal{O}(|y|^3)}\, |a_0|^2(t,\sqrt{h} y)\, \rho^2(t,\sqrt{h} y)\, |g|^{\frac 1 2}(t, \sqrt{h} y)  \dif t \dif y. 
\end{align*}
Next we denote $ \Tilde{\varphi}(t,\sqrt{h}y):=   \varphi(t,\sqrt{h} y) \,  e^{\sqrt{h} \mathcal{O}(|y|^3)}\, |a_0|^2(t,\sqrt{h} y)\, \rho^2(t,\sqrt{h} y)\, |g|^{\frac 1 2}(t, \sqrt{h} y)$. By applying Taylor's theorem we obtain
\begin{equation*}
    \Tilde{\varphi}(t,\sqrt{h}y) = \Tilde{\varphi}(t,0)+ \sqrt{h} y \cdot \nabla_y \Tilde{\varphi}(t,\alpha y\sqrt{h}) \quad \mbox{for certain $\alpha \in (0,1)$}. 
\end{equation*}
Note that $ \Tilde{\varphi} (t,0)= \varphi(t,0) |a_0|^2(t,0)$ using the fact that $ |g|(t,0)=1$, and $ \rho(t,0)=1$. This implies
\begin{align*}
    J_1
    &= c_n^2 \int_0^{\tau} \int_{\R^{n-1}} e^{ -\mathrm{Im}(H(t))y\cdot y}\, \big[ \varphi(t,0) |a_0|^2(t,0) + \sqrt{h} y \cdot \nabla_y \Tilde{\varphi}(t,\alpha y\sqrt{h}) \big] \dif y \dif t \\
    & = c_n^2 \Big( \int_{\R^{n-1}} e^{-|y|^2} \int_0^{\tau} \frac {\varphi(t,0) |a_0|^2(t,0)}{\sqrt{\det(\mathrm{Im}(H(t)))}}+ \int_0^{\tau} \int_{\R^{n-1}} e^{ -\mathrm{Im}(H(t))y\cdot y}\,\sqrt{h} y \cdot \nabla_y \Tilde{\varphi}(t,\alpha y\sqrt{h}) \Big).
\end{align*}
Recall from Theorem \ref{thm:gbu-AC22} that  $c_n = (\int_{\R^{n-1}} e^{-\abs{y}^2} \dif y)^{-1/2}$. This entails
\begin{equation*}
    J_1
    = \int_0^{\tau} \varphi(t,0) \frac{|a_0|^2(t,0)}{\sqrt{\det(\mathrm{Im}(H(t)))}}+ c_n^2\int_0^{\tau} \int_{\R^{n-1}} e^{ -\mathrm{Im}(H(t))y\cdot y}\,\sqrt{h} y \cdot \nabla_y \Tilde{\varphi}(t,\alpha y\sqrt{h}).
\end{equation*}
As in \cite{Dos_Jems}*{p.\ 2599} we have that $\frac{|a_0|^2(t,0)}{\sqrt{\det(\mathrm{Im}(H(t)))}} $ is constant, and by our choices of initial data we have $ \frac{|a_0|^2(t,0)}{\sqrt{\det(\mathrm{Im}(H(t)))}}=1 $. From the above we deduce
\begin{equation} \label{eq_acc_6.9}
    J_1 = I\varphi(x,v) + c_n^2 \int_0^{\tau} \int_{\R^{n-1}} e^{-\mathrm{Im}(H(t)) y \cdot y}\, \sqrt{h} y \cdot \nabla_y \Tilde{\varphi}(t, \alpha y \sqrt{h}).
\end{equation}
Combining \eqref{eq_acc_6.8} and \eqref{eq_acc_6.9} we obtain that
\begin{equation} \label{eq_acc_6.10}
    \int_M |u|^2 \varphi \dif{V_g} - I\varphi(x,v)
    = c_n^2 \int_0^{\tau} \int_{\R^{n-1}} e^{ -\mathrm{Im}(H(t)) y \cdot y}\, \sqrt{h} y \cdot \nabla_y \Tilde{\varphi}(t,\alpha y\sqrt{h}) + J_2.   
\end{equation}
By utilizing the norm estimates from Lemmas \ref{lem:a1-AC22} and \ref{lem:a2-AC22} we conclude that 
\begin{align*}
   \int_0^{\tau} \int_{\R^{n-1}} \abs{e^{ -\mathrm{Im}(H(t))y\cdot y}\,\sqrt{h} y \cdot \nabla_y \Tilde{\varphi}(t,\alpha y\sqrt{h}) }\le C \sqrt{h} \nrm[C^1(M)]{\varphi}.
\end{align*}
Since $ \nrm[L^2(M)]{ h^{-\frac {n-1} 4} e^{\I \Phi(t,y)/h}}\le C$ and $ \nrm[C^N(M)]{a_j}\le C$, we have
\begin{equation*}
    J_2 \le C \sqrt{h} \nrm[C^1(M)]{\varphi}.
\end{equation*}
Thus, by combining the preceding estimate with \eqref{eq_acc_6.10}, we conclude 
\begin{align*}
    \abs{\int_M |u|^2 \varphi \dif{V_g} - I\varphi(x,v) }\le C \sqrt{h} \nrm[C^1(M)]{\varphi}.
\end{align*}
This completes the proof.
\end{proof}

Finally, we give the proofs of Lemmas \ref{lem:a1-AC22} and \ref{lem:a2-AC22}. The conditions for $\Phi$ and $a_r$ in these lemmas will be equivalent with the fact that $\nabla^j \Phi$ and $\nabla^j a_r$ solve certain ODEs along $\gamma_{x,v}$. To derive these ODEs we will use properties of the covariant derivative $\nabla_X$ and total covariant derivative $\nabla$ (see e.g.\ \cite{M_Lee_book}). These include the formulas $\nabla T(X, \,\cdot\,) = (\nabla_X T)(\,\cdot\,)$ and 
\begin{multline*}
(\nabla_X T)(X_1, \ldots, X_k) = X(T(X_1, \ldots, X_k)) \\
 - T(\nabla_X X_1, X_2, \ldots, X_k) - \ldots - T(X_1, \ldots, X_{k-1}, \nabla_X X_k).
\end{multline*}
We will also use that $\nabla_X$ commutes with contractions and raising and lowering of indices with respect to $g$. If $S$ is a $p$-tensor and $T$ is a $q$-tensor, we will use the special contraction 
\[
C(S,T)= c_{p,p+1}(S \otimes T^{\sharp})
\]
where $T^{\sharp}$ is obtained from $T$ by raising the first index and $c_{p,p+1}$ contracts the $p$th and $(p+1)$th indices. Equivalently 
\[
C(S,T)(X_1, \ldots, X_{p-1}, Y_1, \ldots, Y_{q-1}) = \sum_{j=1}^N S(X_1, \ldots, X_{p-1},E_j) T(E_j,  Y_1, \ldots, Y_{q-1})
\]
where $\{E_j\}$ is any orthonormal basis. Below we will also write $R(X,Y)T = (\nabla_X \nabla_Y - \nabla_Y \nabla_X - \nabla_{[X,Y]})T$ and $R_V(X,Y) = \langle R(X,V)V, Y \rangle$.

The following general Riemannian geometry identities will give the invariant ODEs for $\nabla^k \Phi$.

\begin{lemma} \label{lem_phi_id}
Let $\Phi$ be a smooth complex function on $M$, and let $G = \mathrm{grad}(\Phi) = (\df \Phi)^{\sharp}$. Then 
\[
\nabla_G (\nabla^2 \Phi) + C(\nabla^2 \Phi, \nabla^2 \Phi) + R_G = \frac{1}{2} \nabla^2(\langle G, G \rangle).
\]
If $H = (\nabla^2 \Phi)^{\sharp}$ is the $(1,1)$-tensor corresponding to $\nabla^2 \Phi$, this identity can be rewritten as 
\[
\nabla_G H + H^2 + R_G^{\sharp} = \frac{1}{2} (\nabla^2(\langle G, G \rangle))^{\sharp}.
\]
For any $k \geq 3$ one has 
\[
\nabla_G(\nabla^k \Phi) + A_k(\nabla^k \Phi) + F_k = \frac{1}{2} \nabla^k(\langle G, G \rangle).
\]
Here $A_k$ is a linear map taking $k$-tensors to $k$-tensors with $|A_k(S)| \leq C_k |\nabla^2 \Phi| |S|$. Moreover, $F_k$ is a $k$-tensor with $|F_k| \leq D_k$ where $D_k$ only depends on curvature quantities on $(M,g)$ and on $|\nabla^j \Phi|$ for $1 \leq j \leq k-1$.
\end{lemma}
\begin{proof} 
Since $G = (\df \Phi)^{\sharp}$ is a gradient field, we have for any $X$, $Y$ that 
\begin{equation} \label{gfs}
\nabla^2 \Phi(X,Y) = \langle \nabla_X G, Y \rangle = \langle \nabla_Y G, X \rangle.
\end{equation}
We compute 
\begin{align*}
    &\nabla^2(\langle G, G \rangle)(X,Y) = \nabla_X (\nabla(\langle G, G \rangle))(Y) = X(Y(\langle G, G \rangle)) - (\nabla_X Y)(\langle G, G \rangle) \\
    &= 2 X(\langle \nabla_Y G, G \rangle) - 2 \langle \nabla_{\nabla_X Y} G, G \rangle \\
    &= 2 \langle \nabla_X \nabla_Y G, G \rangle + 2 \langle \nabla_Y G, \nabla_X G \rangle - 2 \langle \nabla_G G, \nabla_X Y \rangle.
\end{align*}
On the other hand, we have 
\begin{align*}
    &\nabla_G(\nabla^2 \Phi)(X,Y) = G(\langle \nabla_X G, Y \rangle) - \langle \nabla_Y G, \nabla_G X \rangle - \langle \nabla_X G, \nabla_G Y \rangle \\
    &= \langle \nabla_G \nabla_X G, Y \rangle - \langle \nabla_Y G, \nabla_G X \rangle \\
    &= \langle \nabla_X \nabla_G G, Y \rangle + \langle \nabla_{[G,X]} G, Y \rangle + \langle R(G,X)G, Y \rangle - \langle \nabla_Y G, \nabla_G X \rangle
\end{align*}
where we used the definition of the curvature tensor $R(G,X)G$. To simplify the last expression, we apply $X$ to the identity $\langle \nabla_G G, Y \rangle = \langle \nabla_Y G, G \rangle$ obtained from \eqref{gfs} to see that 
\begin{align*}
    \langle \nabla_X \nabla_G G, Y \rangle &= \langle \nabla_X \nabla_Y G, G \rangle + \langle \nabla_Y G, \nabla_X G \rangle - \langle \nabla_G G, \nabla_X Y \rangle \\
    &= \frac{1}{2} \nabla^2(\langle G, G \rangle)(X,Y).
\end{align*}
Thus we obtain, using \eqref{gfs} and the fact that $[G,X] = \nabla_G X - \nabla_X G$,  
\begin{align*}
    &\nabla_G(\nabla^2 \Phi)(X,Y) \\
    &= \frac{1}{2} \nabla^2(\langle G, G \rangle)(X,Y) + \langle \nabla_{Y} G, [G,X] \rangle + \langle R(G,X)G, Y \rangle - \langle \nabla_Y G, \nabla_G X \rangle \\
    &=  \frac{1}{2} \nabla^2(\langle G, G \rangle)(X,Y) - \langle \nabla_Y G, \nabla_X G \rangle - R_G(X,Y).
 \end{align*}
Since $C(\nabla^2 \Phi, \nabla^2 \Phi) = \sum \langle \nabla_X G, E_j \rangle \langle \nabla_Y G, E_j \rangle = \langle \nabla_X G, \nabla_Y G \rangle$, this proves the identity for $\nabla^2 \Phi$.

We next apply $\nabla$ to the identity for $\nabla^2 \Phi$. We also use the identity  
\begin{align*}
    \nabla(\nabla_G T)(X,\,\cdot\,)
    & = \nabla_X \nabla_G T(\,\cdot\,)
    = \nabla_G \nabla_X T + \nabla_{[X,G]}T + R(X,G)T \\
    & = \nabla_G(\nabla T)(X,\,\cdot\,) + \nabla T(\nabla_G X, \,\cdot\,) + \nabla_{[X,G]}T + R(X,G)T \\
    & = \nabla_G(\nabla T)(X,\,\cdot\,)  + \nabla T(\nabla_X G, \,\cdot\,) + R(X,G)T \\
    & = \nabla_G(\nabla T)(X,\,\cdot\,)  + C(\nabla^2 \Phi, \nabla T)(X, \,\cdot\,) + R(X,G)T
\end{align*}
as well as 
\begin{align*}
    \nabla(C(S,T))(X,\,\cdot\,)
    & = \nabla_X(c_{p,p+1}(S \otimes T^{\sharp}) = c_{p,p+1}(\nabla_X S \otimes T^{\sharp} + S \otimes (\nabla_X T)^{\sharp}) \\
    & = C(\nabla S, T)(X,\,\cdot\,) + \sigma C(S, \nabla T)(X,\,\cdot\,)
\end{align*}
where $\sigma$ is a permutation, so that $\sigma S(X_1, \ldots, X_p) = S(X_{\sigma(1)}, \ldots, X_{\sigma(n)})$. Thus we obtain 
\[
\nabla_G(\nabla^3 \Phi) + C(\nabla^2 \Phi, \nabla^3 \Phi) + C(\nabla^3 \Phi, \nabla^2 \Phi) + \sigma C(\nabla^2 \Phi, \nabla^3 \Phi) + F_3 = \frac{1}{2}\nabla^3(\langle G, G \rangle)
\]
where $F_3$ contains terms depending on curvature quantities and on $\nabla^j \Phi$ for $1 \leq j \leq 2$. This is the required equation for $\nabla^3 \Phi$. The cases $k \geq 4$ proceed in an analogous way.
\end{proof}

We can now prove the main lemma for the phase function $\Phi$.

\begin{proof}[Proof of Lemma \ref{lem:a1-AC22}]
    The first requirements for $\Phi$ are the conditions $\Phi(\gamma(t)) = t$ and  $\df \Phi(\gamma(t)) = \dot{\gamma}(t)^{\sharp}$. In order to prescribe higher derivatives for $\Phi$ along $\gamma$ it is convenient to work with tensors along $\gamma$ that only act in directions orthogonal to $\dot{\gamma}$. For any $r, s \geq 0$ we define a smooth vector bundle $E = E^{r,s}$ over $\gamma$ such that the fiber $E_{\gamma(t)}^{r,s}$ is the space of multilinear forms on $(\dot{\gamma}(t)^{\perp})^{\otimes r} \otimes ((\dot{\gamma}(t)^{\perp})^*)^{\otimes s}$. Note that any tensor $A$ at $\gamma(t)$ gives rise to an element $A|_{\dot{\gamma}^{\perp}}$ of $E_{\gamma(t)}$, and conversely any element of $E_{\gamma(t)}$ can be identified with the corresponding tensor at $\gamma(t)$ that vanishes in the $\dot{\gamma}(t)$ direction. Using this identification one can compute $\nabla_{\dot{\gamma}}$ of a section of $E$, and one can check from the definitions that this produces another section of $E$ (this uses $\nabla_{\dot{\gamma}} \dot{\gamma} = 0)$. Similarly, one can raise and lower indices of sections of $E$ with respect to $g$. Below we will assume these conventions and work with tensors only acting in the $\dot{\gamma}^{\perp}$ directions.
    
    Next we require that $\nabla^2 \Phi|_{\dot{\gamma}(t)^{\perp}} = H(t)^{\flat}$, where $H(t)$ solves on $[0,\tau(x,v)]$ the ODE 
    \begin{equation} \label{h_eq_proof}
    \nabla_{\dot{\gamma}} H + H^2 + R_{\dot{\gamma}}^{\sharp} = 0, \qquad H(0) = \I (g|_{\dot{\gamma}(0)^{\perp}})^{\sharp}.
    \end{equation}
    We will also require that $G_k(t) := \nabla ^k\Phi|_{\dot{\gamma}(t)^{\perp}}$ for $3 \leq k \leq N$ solves the ODE 
    \begin{equation} \label{phik_eq_proof}
\nabla_{\dot{\gamma}}G_k + A_k(G_k) + F_k = 0, \qquad G_k(0) = 0.
    \end{equation}
    As discussed below, \eqref{h_eq_proof} and \eqref{phik_eq_proof} have unique solutions. By Lemma \ref{lem_phi_id}, the function $\Phi$ will then satisfy the conditions in Lemma \ref{lem:a1-AC22} except perhaps the uniformity of constants. Thus it remains to verify that the constants are uniform. The main part of the proof will be to verify that the arguments in \cite{Lassas2001boundary}*{Lemma 2.56} for solving the matrix Riccati equation are also valid in our case when the equation is written invariantly.
    
    To this end, let $Z(t)$ and $Y(t)$ be $(1,1)$-tensors along $\gamma = \gamma_{x,v}$ acting on $\dot{\gamma}^{\perp}$ that satisfy the following linear system of ODEs for $t \in [0,\tau(x,v)]$:
    \begin{alignat*}{2}
        \nabla_{\dot{\gamma}} Y & = Z, & Y(0) & = I, \\
        \nabla_{\dot{\gamma}} Z & = - R_{\dot \gamma}^{\sharp}Y, \quad & Z(0) & = \I (g|_{\dot{\gamma}(0)^{\perp}})^{\sharp}.
    \end{alignat*}
    This is a linear system and $\abs{R_{\dot \gamma}^{\sharp}} \leq C$, where $C$ denotes a constant that is uniform over $(x,v) \in \mathcal{G}_T$ and $t \in [0,\tau(x,v)]$ and may change from line to line. By energy estimates \cite{Taylor_book}*{Section 1.5} and by the fact that $\tau(x,v) \leq T$, it follows that $\abs{Y} + \abs{Z} \leq C$ uniformly.

    We wish to prove the uniform bound 
    \begin{equation} \label{yt_unif_bound}
    |Y(t)w|
    \geq C^{-1} |w|, \qquad w \perp \dot{\gamma}(t).
    \end{equation}
    To this end we first note the following Leibniz rule: if $A(t)$ and $B(t)$ are $(1,1)$-tensors and $c$ contracts the second and third indices, then 
    \[
    \nabla_{\dot{\gamma}} (AB) = \nabla_{\dot{\gamma}} (c(A \otimes B)) = c(\nabla_{\dot{\gamma}} A \otimes B) + c(A \otimes \nabla_{\dot{\gamma}} B) = (\nabla_{\dot{\gamma}} A)B + A(\nabla_{\dot{\gamma}} B).
    \]
     Then the argument in \cite {Lassas2001boundary}*{Lemma 2.57}, together with the fact that $R_{\dot \gamma}^{\sharp}$ is real and symmetric, gives that 
    \[
    \nabla_{\dot{\gamma}}(Z^t Y - Y^t Z) = \nabla_{\dot{\gamma}}(Z^* Y - Y^* Z) = 0.
    \]
    Here $Y^t$ and $Y^*$ etc are defined as $\langle Y v,w \rangle = \langle v, Y^t w \rangle$ and $(Y v, w) = (v, Y^* w)$, where $(v,w)$ is the sesquilinear $g$-inner product on complex tangent vectors. In particular, if $v(t)$ is a complex vector that is parallel along $\gamma$, this implies that  
    \begin{align*}
    2 \I \, \mathrm{Im} \big( Y(t) v(t), Z(t) v(t) \big) & = \big( (Z(t)^* Y(t) - Y(t)^* Z(t)) v(t), v(t) \big) \\
    & = \big( (Z(0)^* Y(0) - Y(0)^* Z(0)) v(0), v(0) \big) \\
    &= -2 \I g(v(0), \overline{v(0)}).
    \end{align*}
    If $v(t) = w$ where $w \perp \dot{\gamma}(t)$, then also $v(0) \perp \dot{\gamma}(0)$, and since $|v(s)|^2$ is constant in $s$ we have 
    \begin{equation} \label{im_ytw_ztw}
    \mathrm{Im} \big( Y(t) w, Z(t) w \big) = - |w|^2
    \end{equation}
    whenever $t \in [0,\tau(x,v)]$ and $w \perp \dot{\gamma}(t)$. In particular,
    \[
    |w|^2 \leq |Y(t)w|\,|Z(t)w| \leq C |Y(t)w|\,|w|
    \]
    using the uniform bound $|Z(t)| \leq C$. This proves \eqref{yt_unif_bound}.


    Now we can define $H(t)$ by 
    \[
    H(t) v = Z(t)Y(t)^{-1} v, \qquad v \perp \dot{\gamma}(t),
    \]
where $Y(t)^{-1}$ denotes the inverse of $Y(t)$ on $\dot{\gamma}(t)^{\perp}$ which exists by \eqref{yt_unif_bound}.  It follows that $H(t)$ solves \eqref{h_eq_proof} and satisfies $|H(t)| \leq C$ uniformly over $t \in [0,\tau(x,v)]$. Moreover, for $w \perp \dot{\gamma}(t)$ one obtains from \eqref{im_ytw_ztw} and the estimate $|Y(t)| \leq C$ that  
\[
(\mathrm{Im}(H(t))w, w) = \mathrm{Im}(Z(t)Y(t)^{-1}w,w) = |Y(t)^{-1}w|^2 \geq C^{-2} |w|^2.
\]
Thus $\nabla^2 \Phi$ satisfies the uniform estimate in \eqref{eq:aq2-AC22}. The linear ODEs \eqref{phik_eq_proof} are uniquely solvable with uniform bounds by energy estimates \cite{Taylor_book}*{Section 1.5}. This concludes the proof of the lemma.
\end{proof}

The proof of Lemma \ref{lem:a2-AC22} concerning the amplitudes proceeds in a similar way.
But prior to the proof of Lemma \ref{lem:a2-AC22}, we present the following result first.

\begin{lemma}\label{lem:amplitiude}
    For any smooth function $a$, one has  
    \begin{align*}
        \nabla(La)= L (\nabla a) + B( \nabla a) + F a
    \end{align*}
    where $B$ is a smooth linear map satisfying $|B(\nabla a)| \leq C |\nabla a|$ and $F$ satisfies $|F| \leq C$, with $C$ only depending on $(M,g)$ and $\Phi$. For any  integer $k\ge 2$, one has 
    \begin{align*}
        \nabla^k (La) = L(\nabla^k a) + B_k(\nabla^ka) +F_k,
    \end{align*}
    where $B_k$ is a linear map  satisfying $ \abs{B_k(\nabla^k a)}\le C \abs{\nabla^k a}$. Moreover, $|F_k| \leq C$ where $C$ depends on $\Phi$, curvature quantities on $(M,g)$ and $ |\nabla^j a|$ for $ 1\le j\le k-1$.
\end{lemma}
\begin{proof}
We first extend the notion of $L$ for any vector field or tensor field as 
\[
LZ= \frac 1 \I ( 2\nabla_GZ + \Delta_g \Phi Z).
\]
We next compute $\nabla(La) - L (\nabla a)$. To this end, we consider
\(
\nabla (La)(X)
=\nabla_X(La)
= -\I (2\nabla_X \nabla_G a + \nabla_X (\Delta_g \Phi a))
= -\I [2\nabla_G \nabla_X a + 2\nabla_{[X,G]} a+ (X\Delta_g \Phi) a + (\Delta_g \Phi) (Xa) ].
\)
Since $ [X,G]= \nabla_XG-\nabla_G X $, this along with preceding equation entails
\begin{align} \label{eq_a.2}
    \nabla (La)(X)
    = -\I [ 2\nabla_G\nabla_X a +2 \nabla a (\nabla_X G)-2\nabla a(\nabla_G X) + (X\Delta_g \Phi) a + \Delta_g \Phi (Xa) ].    \end{align}
We also have 
\begin{align}\label{eq_a.3}
    L(\nabla a)(X)
    &  = -\I (2\nabla_G\nabla a + \Delta_g \Phi \nabla a)(X)\notag\\
    &= -\I [2\nabla_G\nabla_X a - 2\nabla a (\nabla_G X)+ (\Delta_g \Phi) \nabla a (X)]. 
\end{align}
The combination of \eqref{eq_a.2} and \eqref{eq_a.3} entails
\begin{align*}
    \nabla(La)(X)= L (\nabla a) (X)+ \frac{2}{\I}\nabla a(\nabla_X G) +\frac{1}{\I} (\nabla_X \Delta_g \Phi) a. 
\end{align*}
To prove the relation for higher order derivatives, we will employ induction argument and assume that \begin{align}\label{induction}
    \nabla^k(La) = L(\nabla^k a) + B_k(\nabla^ka) +F_k,
\end{align}
holds for any smooth function  $a$, where $|B_k|, |\nabla B_k|, |F_k|, |\nabla F_k| \leq C$. Then for $ k+1$ applying $\nabla$ to the equation \eqref{induction} we obtain 
\begin{align*}
\nabla(\nabla^kLa) (X) = \nabla_XL(\nabla^k a)+ \nabla_X B_k(\nabla^k a)   +\nabla_XF_k. 
\end{align*}
Then it remains to compute formula for $ \nabla_XL(\nabla^k a)$ and $ L(\nabla^{k+1} a)(X,\cdots)$. This can be done in a similar fashion as above. This completes the induction argument.
\end{proof}

\begin{proof}[Proof of Lemma \ref{lem:a2-AC22}]
As in the proof of Lemma \ref{lem:a1-AC22}, we work with tensors that only act in the $\dot{\gamma}^{\perp}$ directions. By the construction of the  amplitude $a=a_0+ha_1+\ldots+ h^k a_k$ we have that $ \nabla^j (La_0)=0$ on $\gamma$ for $ 0\le j\le N$. This along with Lemma \ref{lem:amplitiude} entails that $ \nabla^j a_0$ satisfies the linear ODE
\begin{align}\label{eq_transport}
 L(\nabla^j a_0|_{\dot{\gamma}^{\perp}})+ B_j(\nabla^j a_0|_{\dot{\gamma}^{\perp}}) +F_j=0   \end{align} 
along the geodesic  $\gamma$ for $ 0\le j\le N$. By energy estimates \cite{Taylor_book}*{Section $1.5$} and by the fact that $ \tau(x,v) \le T$, we conclude that $ \abs{\nabla^j a_0|_{\dot{\gamma}^{\perp}}} \le C$ uniformly. This further entails $ \nrm[C^N(M)]{a_0} \le C$ uniformly. Moreover, we have that $\nabla^j ( La_r- \Delta a_{r-1})=0 $ for $1\le j\le N$ and $1\le r\le N$. Utilizing this and Lemma \ref{lem:amplitiude}, one can obtain certain linear ODEs for $ \nabla^j a_r|_{\dot{\gamma}^{\perp}}$ (similar to \eqref{eq_transport} with different source terms). Hence by standard energy estimates we conclude that $ \nrm[C^N(M)]{a_r}\le C$ uniformly for $ 1\le r\le  N$. 

 It remains to prove \eqref{principle_term}. By Lemma \ref{lem:a2-AC22} (a)
 we have that $La_0|_{\gamma(t)}=0$. Recall that $ Lv := \frac{1}{\I}(2 \langle \df \Phi, \df v \rangle + (\Delta_g \Phi)v).  $  This along with $La_0|_{\gamma(t)}=0$  implies  \[\nabla_{\dot{\gamma}} a_0(\gamma(t)) + \frac{1}{2} \Delta_g\Phi(\gamma(t))\, a_0(\gamma(t))=0 \quad \mbox{and} \quad a(\gamma(0))=1. \]  
This is a linear ODE along $\gamma$ with an  initial  condition. It has a unique solution and it is given by 
\begin{equation*}
    a_0(\gamma(t))
    = \exp \Big[ -\frac{1}{2} \int_0^t \Delta_g\Phi(s)\dif s \Big]
    = \exp \Big[ -\frac{1}{2} \int_0^t \mathrm{tr}_g(H(s)) \dif s \Big].
\end{equation*}
In the last part we used $\nabla^2 \Phi(\dot{\gamma}, w) = \langle \nabla_{\dot{\gamma}} \nabla \Phi, w \rangle = 0$ since $\nabla \Phi = \dot{\gamma}$. This completes the proof.
\end{proof}

\subsection{Self-intersection case} \label{subsec:UCse-AC22}

We now describe an extension procedure that allows us to reduce the proofs of Theorems \ref{thm:gbuc-AC22} and  \ref{thm:gbu-AC22} in the general case to the case where the geodesics do not self-intersect, so that the self-intersection case can be handled.

Recall that $(M,g)$ is a domain with boundary in the closed manifold $(S,g)$, which has positive injectivity radius $\mathrm{inj}(S)$ \cite{M_Lee_book}*{Lemma 6.16}. Below we write $\mathrm{inj}(M) = \mathrm{inj}(S)$.
We first give an upper bound for the number of self-intersection points for geodesics in $M$ with length $\leq T$.

\begin{lemma} \label{lem:upsT-AC22}
    Let $(M,g)$ be a compact oriented Riemannian manifold with smooth boundary and let $T > 0$. 
    There is an uniform upper bound on the number of self-intersection points for all geodesics $\gamma_{x,v}$ with $(x,v) \in \mathcal{G}_T$.
\end{lemma}

\begin{proof}
    Let $\gamma \colon [0,\tau(x,v)] \to M$  be a geodesic.
    Since $\tau(x,v)$ is the length of the geodesic $\gamma$, we can divide $\gamma$ into
    \begin{equation} \label{eq:L2tu-AC22}
    L_{x,v} = \frac{2\tau(x,v)}{\mathrm{inj}(M)}
    \end{equation}
    pieces such that each piece is of length $\mathrm{inj}(M)/2$ except perhaps for the last piece, and we denote each piece as $\gamma_1, \ldots, \gamma_{L_{x,v}}$ sequentially.
    Note that each $\gamma_i$ is not self-intersecting.
    Also, for $j \neq i$, $\gamma_i$ and $\gamma_j$ intersect only once.
    To see this, if $\gamma_i$ and $\gamma_j$ intersect at two different points $x_0$ and $y_0$, then there are two distinct geodesics connecting these two points.
    This contradicts the definition of $\mathrm{inj}(M)$.
    Therefore, there are at most $L_{x,v}-1$ self-intersections happening for each $\gamma_i$. Since $\tau(x,v) \leq T$ for $(x,v) \in \mathcal{G}_T$, we define 
    \[
    L = \frac{2T}{\mathrm{inj}(M)}
    \]
    and observe that the number of intersection points for all the geodesics with $(x,v) \in \mathcal{G}_T$ is bounded by $L(L-1)$.
    The proof is done.
\end{proof}

Our next lemma gives a uniform lower bound on the angles between segments of a geodesic at a self-intersection point. 

\begin{lemma}\label{lem:uniform angle}
    Given $T>0$ there exists $\epsilon>0$ such that for any $ (x,v)\in \mathcal{G}_T$ with $\gamma_{x,v}(t)= \gamma_{x,v}(s),\, s\neq t$ one has $\langle \dot{\gamma}_{x,v}(s),\dot{\gamma}_{x,v}(t)\rangle\le 1- \epsilon$.   
\end{lemma}
\begin{proof}
    We argue by contradiction. Let us assume that for all $j$, there exists $(x_j,v_j) \in \mathcal{G}_T$ and $(s_j,t_j) \in [0,\tau(x_j,v_j)]$ with $s_j < t_j$ such that 
 \begin{align}\label{eq_6.22}
     \gamma_{x_j,v_j}(s_j)  = \gamma_{x_j,v_j}(t_j),\quad \langle \dot{\gamma}_{x_j,v_j}(s_j),\dot{\gamma}_{x_j,v_j}(t_j)\rangle\ge 1-\frac{1}{j}. 
 \end{align}
Since $ \PD_{+}SM$ is compact, there exists a subsequence, denoted also as $(x_j,v_j)$, that converges to $(x,v)\in \PD_{+}SM.$ Since $ \tau$ is upper semi-continuous and $\tau(x,v)\le T$, this further implies that $(x,v)\in \mathcal{G}_T$.
By further choosing a subsequence, we can assume $s_j$ and $t_j$ converge to $s$ and $t$ respectively with $s \leq t$, and since $ s_j,t_j \in [0,\tau(x_j,v_j)]$, we know $s,t \in [0,\tau(x,v)]$. Using these, by taking the limit $j \to \infty$ in \eqref{eq_6.22},
 we conclude that
 \begin{align*}
     \gamma_{x,v}(s)=\gamma_{x,v}(t), \quad \langle\dot{\gamma}_{x,v}(s),\dot{\gamma}_{x,v}(t)\rangle=1 \implies  \gamma_{x,v}(s)=\gamma_{x,v}(t),\quad\dot{\gamma}_{x,v}(s)= \dot{\gamma}_{x,v}(t).
 \end{align*}
Now we have two possibilities: either $s < t$ or $s = t$. If $s < t$, then $\gamma_{x,v}$ makes a loop, but this contradicts with the fact that $\tau(x,v)\le T$. If $s=t$, then $s_j$ and $t_j$ get simultaneously   close to $s$, this is again a contradiction because $\mbox{inj}(M) >0$.
We complete the proof.
\end{proof}

\smallskip

We are going to glue many copies of subsets of $M$ together so that the geodesic $\gamma$ does not intersect itself in the new manifold.
Then we can apply the results in Section \ref{subsec:UCno-AC22} to obtain uniform bounds for self-intersecting geodesics.
We refer to \cite{StefanovUhlmann2008}*{Section 2.1} for similar ideas.

Let $N = L(L-1)$ and let $t_j = \frac j N \tau(x,v)$.
Let $\{ (U_j, \phi_j) \}_{j=0}^{N+1}$ be an open cover of $\gamma$, where $U_j = \phi_j^{-1}(I_j \times B)$ and $I_j = (t_j - 2\epsilon, t_{j+1} - \epsilon)$ with $j = 0,\cdots,N$.
In this way, we see each $\{ (U_j, \phi_j) \}$ is a chart of $\gamma_j$.
We write as $(\tilde U_j, \tilde \phi_j)$ copies of charts $(U_j, \phi_j)$, and we also copy the Riemann tensor structure from  $(U_j, \phi_j)$ to $(\tilde U_j, \tilde \phi_j)$, say, we copy $g$ as $\tilde g$.
We want to glue different $\tilde U_j$ together.
To that end, let us investigate the intersection parts $U_j \cap U_{j+1}$ for different $j$.
We denote
\begin{equation} \label{eq:VjWj-AC22}
    V_j := \tilde \phi_j^{-1} \circ \phi_j (U_j \cap U_{j+1}) \subset \tilde U_j
    \quad \textrm{and} \quad
    W_j := \tilde \phi_{j+1}^{-1} \circ \phi_{j+1} (U_j \cap U_{j+1}) \subset \tilde U_{j+1}.
\end{equation}
Then $V_j$ is the copy of $U_j \cap U_{j+1}$ in $\tilde U_j$, and $W_j$ is the copy of $U_j \cap U_{j+1}$ in $\tilde U_{j+1}$.
There is a diffeomorphism between $V_j$ and $W_j$ because they are both copies of $U_j \cap U_{j+1}$.
According to \eqref{eq:VjWj-AC22}, this diffeomorphism can be expressed as
\begin{equation} \label{eq:idVW-AC22}
    W_j = \tilde \phi_{j+1}^{-1} \circ \phi_{j+1} \circ \phi_j^{-1} \circ \tilde \phi_j (V_j).
\end{equation}
Now we identify $V_j$ and $W_j$ though this diffeomorphism.
This identification allows us to construct the following manifold $\tilde M$:
\[
\tilde M := \bigsqcup_{j=0}^{N+1} \tilde U_j
\]
Note that the boundary of $\tilde M$ is not smooth yet.
This can be seen illustratively from the points $A$ and $B$ in Fig.~\ref{fig:int-AC22}.
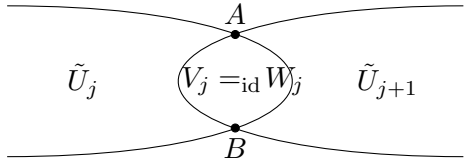
\begin{figure}[ht]
    \begin{tikzpicture}
    \draw (-3,-1) .. controls (2,-1) and (2,1) .. (-3,1);
    \draw (3,-1) .. controls (-2,-1) and (-2,1) .. (3,1);
    
    \node at (-2,0) {$\tilde U_j$};
    \node at (2,0) {$\tilde U_{j+1}$};
    \node at (0.1,0) {$V_j =_{\mathrm{id}}\! W_j$};
    
    \filldraw (0,0.62) circle (1.5pt) node[anchor=south] {$A$};
    \filldraw (0,-0.62) circle (1.5pt) node[anchor=north] {$B$};
    \end{tikzpicture}
    \caption{Intersection between $\tilde U_j$ and $\tilde U_{j+1}.$ Here the notation $=_{\mathrm{id}}$ means equal by identification through \eqref{eq:idVW-AC22}.}
    \label{fig:int-AC22}
\end{figure}

\begin{lemma} \label{lem:Msmo-AC22}
    $(\tilde M,\tilde g)$ is an $n$-dimensional compact Riemannian manifold with boundary.
\end{lemma}

\begin{proof}
    Because there are overlaps between $U_j$ and $U_{j+1}$ for $j = 0,\cdots,N$,
    and because $\tilde U_j$ are copies of $U_j$, we see that there is a natural identification between the neighborhoods of the boundary $\tilde U_j \cap \tilde U_{j+1}$ in $\tilde U_j$ and in $\tilde U_{j+1}$, respectively.
    This guarantees that $\tilde M$ is a compact Riemannian manifold with dimension the same as in $\tilde M$.
\end{proof}

We shall show that $\tilde M$ can be trimmed to become a manifold with smooth boundary.
To this end, we first introduce the geodesic $\tilde \gamma$ and prove it does not intersect itself.
Denote $\tilde \gamma$ to be the curve in $\tilde M$ with coordinates $\bigcup_{j=0}^{N} I_j \times \{0\}$, that is to say,
\[
\tilde \gamma
\colon \bigcup_{j=0}^{N} I_j \ \to \ \bigcup_{j=0}^{N} \tilde \phi_j^{-1}(I_j \times \{0\}),
\quad
\tilde \gamma
\colon t \in I_j \ \mapsto \ \tilde \phi_j^{-1}(t \times \{0\}).
\]
We call $\tilde \gamma$ a \emph{lifting} of $\gamma$.

\begin{lemma} \label{lem:tgg-AC22}
    $\tilde \gamma$ is a geodesic in $\tilde M$.
\end{lemma}

\begin{proof}
    For a point $\tilde p \in \tilde \gamma$ that belongs to the non-gluing part of certain $\tilde U_j$, we can identity a small neighborhood of $\tilde p$ with that of $p \in \gamma$ in $M$.
    Therefore, we only need to investigate points $\tilde p_j$ at $\tilde \gamma \cap \tilde U_j \cap \tilde U_{j+1}$ for $j = 0,\cdots, N$.
    The point $\tilde p_j$ belongs to $\tilde U_j$, and thus its neighborhood can also be identified to a neighborhood certain point in $\gamma$.
    Therefore, $\tilde \gamma$ satisfies the geodesic equation in $\tilde M$ just as $\gamma$ does in $M$, and so $\tilde \gamma$ is a geodesic in $\tilde M$.
\end{proof}

\begin{lemma} \label{lem:tit-AC22}
    $\tilde \gamma$ does not intersect itself in $\tilde M$.
\end{lemma}

\begin{proof}
    We argue by contradiction.
    Assume $\tilde \gamma(t_1) = \tilde \gamma(t_2)$ for $t_1 \neq t_2$.
    If $t_1$, $t_2$ belong to the same interval $I_j$, then we can conclude $\tilde \phi_j^{-1}(t_1 \times \{0\}) = \tilde \phi_j^{-1}(t_2 \times \{0\})$, which gives $t_1 = t_2$ because $\phi_j^{-1}$ is a diffeomorphism.
    But this contradicts with $t_1 \neq t_2$.
    If $t_1 \in I_j$ and $t_2 \in I_k$ for $j \neq k$, then $\tilde \gamma(t_1) = \tilde \phi_j^{-1}(t_j \times \{0\}) \subset \tilde U_j$ and $\tilde \gamma(t_2) = \tilde \phi_k^{-1}(t_k \times \{0\}) \subset \tilde U_k$.
    From \eqref{eq:L2tu-AC22} we see it is impossible that $|j-k| = 1$, because the injectivity radius covers two consecutive pieces, thus $|j-k| \geq 2$.
    Therefore, $\tilde U_j$ and $\tilde U_k$ are two distinct sets who do not share any gluing part, so it is impossible that $\tilde \gamma(t_1) = \tilde \gamma(t_2)$.
    In both cases, we have a contradiction.
    The proof is done.
\end{proof}

\begin{lemma} \label{lem:Msm-AC22}
    $\tilde M$ contains a subset $\bar M$ such that $(\bar M,\tilde g)$ is a $n$-dimensional compact Riemannian manifold with smooth boundary.
\end{lemma}

\begin{proof}
    The first part of the proof follows from Lemma \ref{lem:Msmo-AC22},
    because $\tilde \gamma$ does not intersect itself in $\tilde M$, we can construct a global coordinates $\tilde \psi \colon M \to \Rn$ according to $\tilde \gamma$.
    Let $\kappa \in (0,1)$ be a small enough constant such that $\bar M := \psi^{-1}(I \times B(0,\kappa))$ is a subset of $\tilde M$.
    This $\bar M$ has a smooth boundary.
    See Fig.~\ref{fig:fil-AC22} as an illustration.
    The proof is done.
\end{proof}

\begin{figure}[ht]
    \centering
    \begin{tikzpicture}
    \draw (-3,-1) .. controls (2,-1) and (2,1) .. (-3,1);
    \draw (3,-1) .. controls (-2,-1) and (-2,1) .. (3,1);
    
    \draw (-3,0) -- (3,0) node[anchor=west] {$\gamma_{x,v}$};
    \draw (-3,0.4) -- (3,0.4) node[anchor=west] {$a$};
    \draw (-3,-0.4) -- (3,-0.4) node[anchor=west] {$b$};
    
    \fill [opacity=0.3, pattern=north west lines] (-3,-0.4) rectangle (3,0.4);
    
    \node at (-3.3,0.8) {$\tilde U_j$};
    \node at (3.5,0.8) {$\tilde U_{j+1}$};
    
    \filldraw (0,0.6) circle (1.5pt) node[anchor=south] {$A$};
    \filldraw (0,-0.6) circle (1.5pt) node[anchor=north] {$B$};
    \end{tikzpicture}
    \caption{Let $\bar M$ be the area in between lines $a$ and $b$.
    Compared to $\tilde M$ whose boundary may not be smooth at points $A$ and $B$, $\bar M$ has a smooth boundary.}
    \label{fig:fil-AC22}
\end{figure}
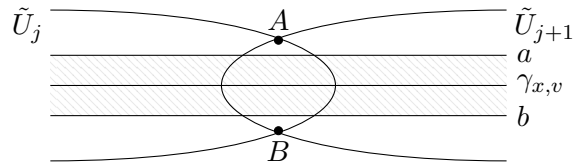
With a slight abuse of notation, we refer $\tilde M$ as this $\bar M$ with smooth boundary.
This shall bring no trouble to the following analysis, as we only consider the situation in small enough neighborhoods of $\tilde \gamma$ in $\tilde M$, and of $\gamma$ in $M$.
We call $(\tilde M, \tilde g)$ a \emph{lifting manifold of $(\tilde M, \tilde g)$ with respect to $\gamma$}.
We define a map $\Psi$ from $\tilde M$ to $\bigcup_{j=0}^{N+1} U_j \subset M$ by
\begin{equation} \label{eq:Pijj-AC22}
    \Psi \colon \tilde M \to \bigcup_{j=0}^{N+1} U_j,
    \quad
    \Psi \colon p \in \tilde U_j \ \mapsto \ \phi_j^{-1} \circ \tilde \phi_j(p).
\end{equation}
It can be checked that $\Psi(\tilde \gamma) = \gamma$.

\begin{lemma} \label{lem:tgni-AC22}
    $\forall p \in \tilde M$, the set $\Psi^{-1}(p)$ is finite with cardinality  $\le N+2$.
    Moreover, $\Psi$ is a local diffeomorphism.
\end{lemma}

\begin{proof}
    Assume that $q$, $q' \in \tilde M$ are distinct points with $p = \Psi(q) = \Psi(q')$.
    According to the definition of $\tilde M$, we have $q \in \tilde U_j$ and $q' \in \tilde U_k$ for certain $0 \leq j,k \leq N+1$.
    This gives $\phi_j(p) = \tilde \phi_j(q)$ and $\phi_k(p) = \tilde \phi_k(q')$.
    If $j = k$, then from the diffeomorphism property of $\tilde \phi_j$ we can conclude $q = q'$.
    This contradicts with our assumption that $q \neq q'$.
    This shows that elements in $\Psi^{-1}(p)$, if there are more than two, must come from different sets of $\tilde U_j~(j = 0,1,\cdots, N+1)$.
    Therefore, there are at most $N+2$ elements in the set $\Psi^{-1}(p)$.
    
    Locally $\Psi$ is defined by $\Psi(p) := \phi_j^{-1} \circ \tilde \phi_j(p)$ if $p \in \tilde U_j$.
    Both $\phi_j^{-1}$ and $\tilde \phi_j$ are locally diffeomorphic, thus $\Psi$ is also a local diffeomorphism.
    The proof is done.
\end{proof}

\smallskip

Now, we apply the arguments in Section \ref{subsec:UCno-AC22} to $\tilde \gamma$ and construct a quasimode $\tilde u := e^{\I \lambda \Phi} a$ which is more precisely given in the form \eqref{u_nsi_form}.
Denote
\[
u(p)
:= \sum_{s \in \Psi^{-1}(p)} \tilde u(s), \qquad
\forall p \in \bigcup_{j=0}^{N+1} U_j \subset M.
\]
The geodesic $\tilde \gamma$ does not intersect itself on $(\tilde M, \tilde g)$, thus by Proposition \ref{thm:gbu-AC22} and Lemmas \ref{lem:a1-AC22} and \ref{lem:a2-AC22} we can find uniform bounds $\tilde c$, $\tilde C$ which depend only on the geometric structure of $(\tilde M, \tilde g)$.
Because for each $\tilde p \in \tilde M$ we can find a neighborhood that is locally diffeomorphic to that of $M$, we see that the local geometric structure of $(\tilde M, \tilde g)$ is preserved with respect to $(M,g)$.
Because the construction for the phase function $\Phi$ and the amplitude $a$ is local,
thus for the self-intersecting geodesic $\gamma$ on $M$, we can also find the corresponding  bounds $c =  \tilde c \times\# (\Psi^{-1}(p))$ and $C =  \tilde C \times \# (\Psi^{-1}(p))$. These bounds are uniform over all $ (x,v) \in \mathcal{G}_T$  due to Lemmas \ref{lem:tgni-AC22} and \ref{lem:upsT-AC22}. Theorem \ref{thm:gbu-AC22} follows from this also in the case of self-intersecting geodesics.

\begin{proof}[Proof of Theorem \ref{thm:gbuc-AC22} for self-intersection cases]
    Let $\gamma$ be a geodesic in $(M,g)$ which may intersect itself.
    Let us construct a lifting manifold $(\tilde M, \tilde g)$ with respect to $\gamma$,
    and let $\tilde \gamma$ be the lifting of $\gamma$ in $(\tilde M, \tilde g)$.
    By Lemmas \ref{lem:tgg-AC22} and \ref{lem:tit-AC22} we know $\tilde \gamma$ is a non-self-intersecting geodesic in $(\tilde M, \tilde g)$.
    Let us also define $\Psi$ according to \eqref{eq:Pijj-AC22}.
    For any $\varphi \in C^1(M)$, we denote $\tilde \varphi := \varphi \circ \Psi$, thus $\tilde \varphi \in C^1(\tilde M)$.
    Therefore, according to Theorem \ref{thm:gbuc-AC22} for non-self-intersection cases, we have
    \begin{equation*}
        \left\lvert \int_{\tilde M} |\tilde u|^2 \tilde \varphi \dif V_{\tilde g} - I \tilde \varphi(x,v) \right\rvert \leq C \norm{\tilde \varphi}_{C^1(\tilde M)} h^{1/2}.
    \end{equation*}
    where $I \tilde \varphi$ represents the geodesic ray transform of $\tilde \varphi$ with respect to $\tilde \gamma$ in $\tilde M$.
    By definition we can have
    \begin{equation*}
    I \tilde \varphi(x,v)
    = \int_{0}^{\tau(x,v)} \varphi \circ \Psi(\tilde \gamma(t,x,v)) \dif t
    = \int_{0}^{\tau(x,v)} \varphi(\gamma(t,x,v)) \dif t
    = I \varphi(x,v),
    \end{equation*}
    where we have used $\Phi(\tilde \gamma) = \gamma$.
    
    From the definition $\tilde \varphi := \varphi \circ \Psi$ we see $\norm{\tilde \varphi}_{C^1(\tilde M)} \leq C \norm{\varphi}_{C^1(M)}$ for certain constant $C$.

    Furthermore, we know
    \(
    u(p)
    := \sum_{s \in \Psi^{-1}(p)} \tilde u(s),
    \)
    thus when $\Psi^{-1}(p)$ contains only one element, say $s$, we have $|u(p)|^2 = |\tilde u(s)|^2$;
    when $\Psi^{-1}(p)$ contains multiple elements, say $\Psi^{-1}(p) = \{s_1, \cdots, s_\ell\}$, then $|u(p)|^2 = \sum_{j=1}^\ell |\tilde u(s_j)|^2 + \sum_{j=1}^\ell \sum_{k \neq j}^\ell \tilde u(s_j) \overline {\tilde u(s_k)}$.
    By Lemma \ref{lem:tgni-AC22} we know $\ell \leq N+2$.
    Since all the intersections are transversal and by Lemma \ref{lem:uniform angle} there is a uniform lower bound on the angles of intersection, one can apply a non-stationary phase argument as in \cite{Dos_Jems}{equation (3.6)} to obtain that the integrals of $\tilde u(s_j) \overline {\tilde u(s_k)}$ for $j \neq k$ are of $\mathcal O(h)$.  
    Combining these arguments, we arrive at
    \begin{equation*}
        \left\lvert \int_M |u|^2 \varphi \dif V_g - I \varphi(x,v) \right\rvert \leq C \norm{\varphi}_{C^1(M)} h^{1/2}.
    \end{equation*}
    The proof is done.
\end{proof}


{

\begin{bibdiv}
\begin{biblist}

\bib{AssylbekovStefanov2020}{article}{
      author={Assylbekov, Yernat~M.},
      author={Stefanov, Plamen},
       title={Sharp stability estimate for the geodesic ray transform},
        date={2020},
        ISSN={0266-5611},
     journal={Inverse Problems},
      volume={36},
      number={2},
       pages={025013, 14},
         url={https://doi.org/10.1088/1361-6420/ab3d12},
      review={\MR{4063206}},
}

\bib{CFO}{article}{
      author={Carstea, Catalin},
      author={Feizmohammadi, Ali},
      author={Oksanen, Lauri},
       title={Remarks on the anisotropic Calderón problem},
       ISSN={0002-9939},
       Journal = { Proceedings of the American Mathematical Society},
        year = {2023},
 url = {https://doi.org/10.1090/proc/16455},
}

\bib{DH72}{article}{
      author={Duistermaat, J.~J.},
      author={H\"{o}rmander, L.},
       title={Fourier integral operators. {II}},
        date={1972},
        ISSN={0001-5962},
     journal={Acta Math.},
      volume={128},
      number={3-4},
       pages={183\ndash 269},
         url={https://doi.org/10.1007/BF02392165},
      review={\MR{388464}},
}

\bib{Dos_Jems}{article}{
      author={Dos Santos~Ferreira, David},
      author={Kurylev, Yaroslav},
      author={Lassas, Matti},
      author={Salo, Mikko},
       title={The {C}alder\'{o}n problem in transversally anisotropic
  geometries},
        date={2016},
        ISSN={1435-9855},
     journal={J. Eur. Math. Soc. (JEMS)},
      volume={18},
      number={11},
       pages={2579\ndash 2626},
         url={https://doi.org/10.4171/JEMS/649},
      review={\MR{3562352}},
}

\bib{DKSU}{article}{
      author={Dos Santos~Ferreira, David},
      author={Kenig, Carlos~E.},
      author={Salo, Mikko},
      author={Uhlmann, Gunther},
       title={Limiting {C}arleman weights and anisotropic inverse problems},
        date={2009},
        ISSN={0020-9910},
     journal={Invent. Math.},
      volume={178},
      number={1},
       pages={119\ndash 171},
         url={https://doi.org/10.1007/s00222-009-0196-4},
      review={\MR{2534094}},
}

\bib{FKO}{misc}{
      author={Feizmohammadi, Ali},
      author={Kian, Yavar},
      author={Oksanen, Lauri},
       title={Rigidity of inverse problems for nonlinear elliptic equations on manifolds},
   publisher={arXiv},
        date={2023},
         url={https://arxiv.org/abs/2306.05839},
        note={arXiv:2306.05839},
}

\bib{GuillarmouTzou2011}{article}{
      author={Guillarmou, Colin},
      author={Tzou, Leo},
       title={Identification of a connection from {C}auchy data on a {R}iemann
  surface with boundary},
        date={2011},
        ISSN={1016-443X},
     journal={Geom. Funct. Anal.},
      volume={21},
      number={2},
       pages={393\ndash 418},
         url={https://doi.org/10.1007/s00039-011-0110-2},
      review={\MR{2795512}},
}

\bib{Colin_JAMS}{article}{
      author={Guillarmou, Colin},
       title={Lens rigidity for manifolds with hyperbolic trapped sets},
        date={2017},
        ISSN={0894-0347},
     journal={J. Amer. Math. Soc.},
      volume={30},
      number={2},
       pages={561\ndash 599},
         url={https://doi.org/10.1090/jams/865},
      review={\MR{3600043}},
}

\bib{Isakov_increasing_stability_2014}{incollection}{
      author={Isakov, V.},
      author={Nagayasu, S.},
      author={Uhlmann, G.},
      author={Wang, J.-N.},
       title={Increasing stability of the inverse boundary value problem for
  the {S}chr\"{o}dinger equation},
        date={2014},
   booktitle={Inverse problems and applications},
      series={Contemp. Math.},
      volume={615},
   publisher={Amer. Math. Soc., Providence, RI},
       pages={131\ndash 141},
         url={https://doi.org/10.1090/conm/615/12268},
      review={\MR{3221602}},
}

\bib{Lassas2001boundary}{book}{
      author={Katchalov, Alexander},
      author={Kurylev, Yaroslav},
      author={Lassas, Matti},
       title={Inverse boundary spectral problems},
      series={Chapman \& Hall/CRC Monographs and Surveys in Pure and Applied
  Mathematics},
   publisher={Chapman \& Hall/CRC, Boca Raton, FL},
        date={2001},
      volume={123},
        ISBN={1-58488-005-8},
         url={https://doi.org/10.1201/9781420036220},
      review={\MR{1889089}},
}

\bib{K_S}{article}{
      author={Kenig, Carlos},
      author={Salo, Mikko},
       title={The {C}alder\'{o}n problem with partial data on manifolds and
  applications},
        date={2013},
        ISSN={2157-5045},
     journal={Anal. PDE},
      volume={6},
      number={8},
       pages={2003\ndash 2048},
         url={https://doi.org/10.2140/apde.2013.6.2003},
      review={\MR{3198591}},
}

\bib{M_Lee_book}{book}{
      author={Lee, John~M.},
       title={Introduction to Riemannian manifolds},
      series={Graduate Texts in Mathematics},
   publisher={Springer, Cham},
        date={2018},
      volume={176},
        ISBN={978-3-319-91754-2; 978-3-319-91755-9},
        note={Second edition of [ MR1468735]},
      review={\MR{3887684}},
}

\bib{LSW21}{article}{
      author={Lafontaine, David},
      author={Spence, Euan~A.},
      author={Wunsch, Jared},
       title={For most frequencies, strong trapping has a weak effect in
  frequency-domain scattering},
        date={2021},
        ISSN={0010-3640},
     journal={Comm. Pure Appl. Math.},
      volume={74},
      number={10},
       pages={2025\ndash 2063},
         url={https://doi.org/10.1002/cpa.21932},
      review={\MR{4303013}},
}

\bib{LassasTaylorUhlmann2003}{article}{
      author={Lassas, Matti},
      author={Taylor, Michael},
      author={Uhlmann, Gunther},
       title={The {D}irichlet-to-{N}eumann map for complete {R}iemannian
  manifolds with boundary},
        date={2003},
        ISSN={1019-8385},
     journal={Comm. Anal. Geom.},
      volume={11},
      number={2},
       pages={207\ndash 221},
         url={https://doi.org/10.4310/CAG.2003.v11.n2.a2},
      review={\MR{2014876}},
}

\bib{LassasUhlmann2001}{article}{
      author={Lassas, Matti},
      author={Uhlmann, Gunther},
       title={On determining a {R}iemannian manifold from the
  {D}irichlet-to-{N}eumann map},
        date={2001},
        ISSN={0012-9593},
     journal={Ann. Sci. \'{E}cole Norm. Sup. (4)},
      volume={34},
      number={5},
       pages={771\ndash 787},
         url={https://doi.org/10.1016/S0012-9593(01)01076-X},
      review={\MR{1862026}},
}

\bib{LeeUhlmann1989}{article}{
      author={Lee, John~M.},
      author={Uhlmann, Gunther},
       title={Determining anisotropic real-analytic conductivities by boundary
  measurements},
        date={1989},
        ISSN={0010-3640},
     journal={Comm. Pure Appl. Math.},
      volume={42},
      number={8},
       pages={1097\ndash 1112},
         url={https://doi.org/10.1002/cpa.3160420804},
      review={\MR{1029119}},
}

\bib{PS_stability}{article}{
      author={Paternain, Gabriel~P.},
      author={Salo, Mikko},
       title={A sharp stability estimate for tensor tomography in non-positive
  curvature},
        date={2021},
        ISSN={0025-5874},
     journal={Math. Z.},
      volume={298},
      number={3-4},
       pages={1323\ndash 1344},
         url={https://doi.org/10.1007/s00209-020-02638-x},
      review={\MR{4282131}},
}

\bib{PSU_book}{book}{
      author={Paternain, Gabriel},
      author={Salo, Mikko},
      author={Uhlmann, Gunther},
       title={Geometric inverse problems---with emphasis on two dimensions},
      series={Cambridge Studies in Advanced Mathematics},
   publisher={Cambridge University Press, Cambridge},
        date={2023},
      volume={204},
        ISBN={978-1-316-51087-2},
      review={\MR{4520155}},
}

\bib{RakeshSalo2020}{article}{
      author={Rakesh},
      author={Salo, Mikko},
       title={Fixed angle inverse scattering for almost symmetric or controlled
  perturbations},
        date={2020},
        ISSN={0036-1410},
     journal={SIAM J. Math. Anal.},
      volume={52},
      number={6},
       pages={5467\ndash 5499},
         url={https://doi.org/10.1137/20M1319309},
      review={\MR{4170189}},
}

\bib{Rakesh_Uhlmann_14}{article}{
      author={Rakesh},
      author={Uhlmann, Gunther},
       title={Uniqueness for the inverse backscattering problem for angularly
  controlled potentials},
        date={2014},
        ISSN={0266-5611},
     journal={Inverse Problems},
      volume={30},
      number={6},
       pages={065005, 24},
         url={https://doi.org/10.1088/0266-5611/30/6/065005},
      review={\MR{3224125}},
}

\bib{Sharafutdinov_book}{book}{
      author={Sharafutdinov, V.~A.},
       title={Integral geometry of tensor fields},
      series={Inverse and Ill-posed Problems Series},
   publisher={VSP, Utrecht},
        date={1994},
        ISBN={90-6764-165-0},
         url={https://doi.org/10.1515/9783110900095},
      review={\MR{1374572}},
}

\bib{Stefanov_Uhlmann_duke}{article}{
      author={Stefanov, Plamen},
      author={Uhlmann, Gunther},
       title={Stability estimates for the {X}-ray transform of tensor fields
  and boundary rigidity},
        date={2004},
        ISSN={0012-7094},
     journal={Duke Math. J.},
      volume={123},
      number={3},
       pages={445\ndash 467},
         url={https://doi.org/10.1215/S0012-7094-04-12332-2},
      review={\MR{2068966}},
}

\bib{StefanovUhlmann2008}{article}{
      author={Stefanov, Plamen},
      author={Uhlmann, Gunther},
       title={Integral geometry on tensor fields on a class of non-simple
  {R}iemannian manifolds},
        date={2008},
        ISSN={0002-9327},
     journal={Amer. J. Math.},
      volume={130},
      number={1},
       pages={239\ndash 268},
         url={https://doi.org/10.1353/ajm.2008.0003},
      review={\MR{2382148}},
}

\bib{SYL}{article}{
      author={Sylvester, John},
      author={Uhlmann, Gunther},
       title={A global uniqueness theorem for an inverse boundary value
  problem},
        date={1987},
        ISSN={0003-486X},
     journal={Ann. of Math. (2)},
      volume={125},
      number={1},
       pages={153\ndash 169},
         url={https://doi.org/10.2307/1971291},
      review={\MR{873380}},
}

\bib{Taylor_book}{book}{
      author={Taylor, Michael~E.},
       title={Partial differential equations {I}. {B}asic theory},
     edition={Second},
      series={Applied Mathematical Sciences},
   publisher={Springer, New York},
        date={2011},
      volume={115},
        ISBN={978-1-4419-7054-1},
         url={https://doi.org/10.1007/978-1-4419-7055-8},
      review={\MR{2744150}},
}

\bib{Uhl_survey}{article}{
      author={Uhlmann, G.},
       title={Electrical impedance tomography and {C}alder\'{o}n's problem},
        date={2009},
        ISSN={0266-5611},
     journal={Inverse Problems},
      volume={25},
      number={12},
       pages={123011, 39},
         url={https://doi.org/10.1088/0266-5611/25/12/123011},
      review={\MR{3460047}},
}

\bib{Uhlmann_Vasy_invention}{article}{
      author={Uhlmann, Gunther},
      author={Vasy, Andr\'{a}s},
       title={The inverse problem for the local geodesic ray transform},
        date={2016},
        ISSN={0020-9910},
     journal={Invent. Math.},
      volume={205},
      number={1},
       pages={83\ndash 120},
         url={https://doi.org/10.1007/s00222-015-0631-7},
      review={\MR{3514959}},
}

\bib{Uhlmann_Wang_2022}{misc}{
      author={Uhlmann, Gunther},
      author={Wang, Yiran},
       title={The anisotropic calderón problem for high fixed frequency},
   publisher={arXiv},
        date={2021},
         url={https://arxiv.org/abs/2104.03477},
        note={arXiv:2104.03477},
}

\bib{Wunsch2012}{article}{
      author={Wunsch, Jared},
       title={Resolvent estimates with mild trapping},
        date={2012},
     journal={Journ{\'e}es {\'e}quations aux d{\'e}riv{\'e}es partielles},
       pages={1\ndash 15},
}

\bib{zworski2012semi}{book}{
      author={Zworski, Maciej},
       title={Semiclassical analysis},
      series={Graduate Studies in Mathematics},
   publisher={American Mathematical Society, Providence, RI},
        date={2012},
      volume={138},
        ISBN={978-0-8218-8320-4},
         url={https://doi.org/10.1090/gsm/138},
      review={\MR{2952218}},
}

\end{biblist}
\end{bibdiv}

}

\end{document}